\title{Intuitionistic modal logic based on neighborhood semantics without superset axiom}
\author{Tomasz Witczak \\ tm.witczak@gmail.com}
\affil{Institute of Mathematics, University of Silesia, Katowice}
\date{January 2018}
\theoremstyle{Theorem}
\newtheorem{tw}{Theorem}[section]
\theoremstyle{Lemma}
\newtheorem{lem}{Lemma}[section]
\theoremstyle{Definition}
\newtheorem{df}{Definition}[section]
\theoremstyle{Remark}
\begin{document}

\maketitle

\begin{abstract}
In this paper we investigate certain systems of propositional intuitionistic modal logic defined semantically in terms of neighborhood structures. We discuss various restrictions imposed on those frames but our constant approach is to discard \textit{superset} axiom. Such assumption allows us to think about specific modalities $\Delta, \nabla$ and new functor $\rightsquigarrow$ depending on the notion of maximal neighborhood. We show how it is possible to treat our models as bi-relational ones. We prove soundness and completeness of proposed axiomatization by means of canonical model. Moreover, we show that finite model property holds. Then we describe properties of \textit{bounded morphism}, \textit{behavioral equivalence}, \textit{bisimulation} and \textit{n-bisimulation}. Finally, we discuss further researches (like some interesting classical cases and particular topological issues).
\end{abstract}

\section*{Introduction}

Our general purpose is to establish sound and complete neighborhood semantics for propositional intuitionistic modal logic. Moreover, we want to investigate and describe some fundamental properties of structures achieved. Basically, we follow the standard path, speaking about the universe $W$ of possible worlds. We assume that each world $w$ has its minimal and maximal neighborhood. The former simulates intuitionistic reachability of other worlds; the latter refers to the modal visibility (and can be properly contained in $W$, i.e. there is no \textit{superset} axiom).

Such approach allows us also to think about specific modality $\Delta$, behaving in maximal neighborhood partially like necessity. Also we discuss $\nabla$ - possibility operator which is not dual to $\Delta$. We show which well-known modal axioms hold for those operators and which are rejected.

 Later, we introduce new functor of implication ($\rightsquigarrow$) depending on the notion of maximal neighborhood. There can also be established certain correspondence between maximal neighborhoods and topological spaces. Even more interesting is bi-relational point of view. In this setting, we prove finite model property. As for completeness results, we propose canonical models based on prime theories. It should be stated out that \textit{fmp} and completeness are obtained only for the mono-modal case (in the language containing single modal operator, that of necessity).

After those intuitionistic considerations, we present few classical modal systems with two operators: $\Box$ and $\Delta$. While $\Delta$ still refers to the satisfaction of formula in the whole maximal neighborhood, $\Box$ describes necessity in minimal neighborhood. We obtain completeness results for those systems and the we show how to establish translation between one of them and our initial intuitionistic modal logic.

\section{Alphabet and language}

Our basic system is named \textbf{IML1}. It has rather standard syntax (i.e. alphabet and language). We use the following notations:

\begin{enumerate}
\item $PV$ is a fixed denumerable set of propositional variables $p, q, r, s, ...$
\item Logical connectives and operators are $\land$, $\lor$, $\rightarrow$, $\bot$, $\Delta$.
\item The only derived connective is $\lnot$ (which means that $\lnot \varphi$ is a shortcut for $\varphi \rightarrow \bot$).
\end{enumerate}

Formulas are generated recursively in a typical way: if $\varphi$, $\psi$ are \textit{wff's} then also $\varphi \lor \psi$, $\varphi \land \psi$, $\varphi \rightarrow \psi$ and $\Delta \varphi$. Semantical interpretation of propositional variables and all the connectives introduced above will be presented in the next section. Attention: $\Leftarrow, \Rightarrow$ and $\Leftrightarrow$ are used only on the level of meta-language (which is obviously classical).

\section{Neighborhood semantics}

\subsection{Foundations of neighborhood semantics}
Neighborhood semantics for (classical) modal logics was introduced by Montague (1968), Scott (1970) and Segerberg (1968). Nowadays it is still investigated (see interesting surveys by Pacuit [6] and Shehtman [8]). The main idea is that we have model $\textbf{M} = \langle W, \mathcal{N}, V \rangle$ where $\mathcal{N}: W \rightarrow P(P(W))$ is so-called \textit{neighborhood function} and $V: PV \rightarrow P(W)$ is valuation. Each world has its (possibly empty) family of neighborhoods (which are just subsets of universe $W$). Standard interpretation of modal operators is shown below:

$w \Vdash \Box \varphi$ $\Leftrightarrow$ there is $U \in \mathcal{N}_{w}$ such that for every $v \in U$ we have $v \Vdash \varphi$

$w \Vdash \Diamond \varphi$ $\Leftrightarrow$ $-V(\varphi) \notin \mathcal{N}_{w}$

The main advantage of neighborhood semantics is that it guarantees completeness results even for very weak modal logics (in particular, for non-regular and non-monotonic systems). The only condition is that \textbf{E} $\subseteq L$. \textbf{E} is the weakest classical modal logic and it contains only instances of classical propositional axioms, duality axiom ($\Box \varphi \leftrightarrow \lnot \Diamond \lnot \varphi$), rule \textit{modus ponens} and the rule of extensionality \textbf{RE} ($\varphi \leftrightarrow \psi$ $\Rightarrow$ $\Box \varphi \leftrightarrow \Box \psi$).

As for the neighborhood semantics for intuitionistic logic, it has been presented by Moniri and Maleki in [4]. They have also considered simple subintuitionistic case of \textbf{BPL} (Basic Propositional Logic).

\subsection{The definition of structure}

\begin{df}
Our basic structure is a neighborhood frame (\textbf{nIML1}-frame) defined as an ordered pair $\langle W, \mathcal{N} \rangle$ where:

\begin{enumerate}
\item $W$ is a non-empty set (of worlds, states or points)

\item $\mathcal{N}$ is a function from $W$ into $P(P(W))$ such that:

\begin{enumerate}

\item $w \in \bigcap \mathcal{N}_{w}$

\item $\bigcap \mathcal{N}_{w} \in \mathcal{N}_{w}$

\item $u \in \bigcap \mathcal{N}_{w}$ $\Rightarrow$ $\bigcap \mathcal{N}_{u} \subseteq \bigcap \mathcal{N}_{w}$ ($\rightarrow$-\textit{condition})

\item $X \subseteq \bigcup \mathcal{N}_{w}$ and $\bigcap \mathcal{N}_{w} \subseteq X$ $\Rightarrow$ $X \in \mathcal{N}_{w}$ (\textit{relativized superset axiom})

\item $u \in \bigcap \mathcal{N}_{w} \Rightarrow \bigcup \mathcal{N}_{u} \subseteq \bigcup \mathcal{N}_{w}$ \textit{($\Delta$-condition)}.

\end{enumerate}

\end{enumerate}

\end{df}

\subsection{Valuation and model}

\begin{df}
Neighborhood \textbf{nIML1}-model is a triple $\langle W, \mathcal{N}, V \rangle$, where $\langle W, \mathcal{N} \rangle$ is an \textbf{nIML1}-frame and $V$ is a function from $PV$ into $P(W)$ satisfying the following condition: if $w \in V(q)$ then $\bigcap \mathcal{N}_{w} \subseteq V(q)$.
\end{df}

\begin{df}
Forcing of formulas in a world $w \in W$ is defined inductively:
\end{df}
\begin{enumerate}

\item $w \Vdash q$ $\Leftrightarrow$ $w \in V(q)$ for any $q \in PV$

\item $w \Vdash \varphi \rightarrow \psi$ $\Leftrightarrow$ $\bigcap \mathcal{N}_{w} \subseteq \{v \in W; v \nVdash \varphi$ or $v \Vdash \psi\}$.

\item $w \Vdash \lnot \varphi$ $\Leftrightarrow$ $w \Vdash \varphi \rightarrow \bot$ $\Leftrightarrow$ $\bigcap \mathcal{N}_{w} \subseteq \{v \in W; v \nVdash \varphi\}$

\item $w \Vdash \Delta \varphi$ $\Leftrightarrow$ $\bigcup \mathcal{N}_{w} \subseteq \{v \in W; v \Vdash \varphi\}$.

\item $w \nVdash \bot$

\end{enumerate}

The reader could say that the symbol $\Delta$ is redundant and that typical $\Box$ would be appropriate. In fact, in intuitionistic modal systems $\Box$ is often defined just like our modality (albeit in bi-relational or algebraic setting). In other words, necessity of $\varphi$ in a given world $w$ means that $\varphi$ is accepted in all worlds which are \textit{modally visible (reacheable)} from $w$. We understand such objection but later we will use $\Box$ symbol for different goals.

There is also one technical annotation: sometimes we will write $X \Vdash \varphi$ where $X$ would be a subset of $W$, in particular - minimal or maximal neighborhood (e.g. $\bigcap \mathcal{N}_{w} \Vdash \varphi$). Clearly it would mean that each element of $X$ forces $\varphi$.

As usually, we say that formula $\varphi$ is satisfied in a model $M = \langle W, \mathcal{N}, V \rangle$ when $w \Vdash \varphi$ for every $w \in W$. It is \textit{true} (tautology) when it is satisfied in each \textbf{nIML1}-model.

\subsection{Equivalent definitions of $\rightarrow$- and $\Delta$-conditions}

\begin{lem}
$\rightarrow$-condition is equivalent to the following property:

$$\bigcap \mathcal{N}_{w} \subseteq X \Rightarrow \bigcap \mathcal{N}_{w} \subseteq \{v \in W; \bigcap \mathcal{N}_{v} \subseteq X\}$$

\end{lem}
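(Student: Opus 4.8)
The plan is to prove both implications directly, using only the definitions; no auxiliary machinery is needed. Write $m_w := \bigcap \mathcal{N}_w$ for brevity in the sketch (in the final text I would just keep the original notation).

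First I would handle the direction from the $\rightarrow$-condition to the displayed property. Assume the $\rightarrow$-condition holds and fix $w$ together with an arbitrary $X$ with $m_w \subseteq X$. To show $m_w \subseteq \{v \in W; m_v \subseteq X\}$, take any $v \in m_w$; the $\rightarrow$-condition gives $m_v \subseteq m_w$, and since $m_w \subseteq X$ by hypothesis we get $m_v \subseteq X$, i.e. $v$ belongs to the right-hand set. As $v$ was arbitrary this is exactly the desired inclusion.

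For the converse I would simply instantiate the displayed property at the specific set $X := \bigcap \mathcal{N}_w$. Since $m_w \subseteq m_w$ trivially, the property yields $m_w \subseteq \{v \in W; m_v \subseteq m_w\}$, which unwound says precisely: for every $u \in \bigcap \mathcal{N}_w$ we have $\bigcap \mathcal{N}_u \subseteq \bigcap \mathcal{N}_w$ — that is the $\rightarrow$-condition.

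There is no real obstacle here; the only point worth flagging is that the ``hard'' direction is the one going from the universally quantified statement (over all $X$) back to the $\rightarrow$-condition, and the trick is just recognizing that the $\rightarrow$-condition is itself the instance $X = \bigcap \mathcal{N}_w$ of that statement, so the universal quantifier over $X$ adds nothing. I would keep the write-up to a few lines and perhaps remark afterwards that this reformulation is the form actually used when verifying that the clause for $\rightarrow$ in the forcing definition is persistent along $\bigcap \mathcal{N}_w$.
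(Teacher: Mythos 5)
Your proposal is correct and follows essentially the same route as the paper: the forward direction uses the $\rightarrow$-condition to pass from $v \in \bigcap \mathcal{N}_w$ to $\bigcap \mathcal{N}_v \subseteq \bigcap \mathcal{N}_w \subseteq X$ (the paper phrases this as a proof by contradiction, yours is direct), and the converse is obtained by instantiating the property at $X = \bigcap \mathcal{N}_w$. If anything, your write-up of the converse is cleaner than the paper's, which instantiates with subscripts $u$ where $w$ is evidently intended.
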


\begin{proof}

$\Rightarrow$

Take $w \in W$ and assume that $\bigcap \mathcal{N}_{w} \subseteq X \subseteq W$. Now consider $Y = \{v \in W; \bigcap \mathcal{N}_{v} \subseteq X\}$. Suppose that $\bigcap \mathcal{N}_{w} \nsubseteq Y$, so there is certain $u \in \bigcap \mathcal{N}_{w}$ such that $u \notin Y$. This gives us that $\bigcap \mathcal{N}_{u} \nsubseteq X$, so there is $r \in \bigcap \mathcal{N}_{u}$ such that $r \notin X$. But if $u \in \bigcap \mathcal{N}_{w}$ then $\bigcap \mathcal{N}_{u} \subseteq \mathcal{N}_{w} \subseteq X$. Then $r \in X$ - and this is our expected contradiction.

$\Leftarrow$

Assume that $u \in \bigcap \mathcal{N}_{w}$ for certain $w \in W$. Of course $\bigcap \mathcal{N}_{u} \subseteq \bigcap \mathcal{N}_{u}$ and then (by the assumption) $\bigcap \mathcal{N}_{u} \subseteq \{v \in W; \bigcap \mathcal{N}_{v} \subseteq \bigcap \mathcal{N}_{u}\}$. So if $u \in \bigcap \mathcal{N}_{w}$ then $\bigcap \mathcal{N}_{u} \subseteq \bigcap \mathcal{N}_{w}$.

\end{proof}

\begin{lem}
$\Delta$-condition is equivalent to the following property:

$$\bigcup \mathcal{N}_{w} \subseteq X \Rightarrow \bigcap \mathcal{N}_{w} \subseteq \{v \in W; \bigcup \mathcal{N}_{v} \subseteq X\}$$

\end{lem}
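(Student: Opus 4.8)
The plan is to mimic the structure of the proof of Lemma 2.4, i.e. prove the two implications separately, using the same bookkeeping of elements but now applied to unions of neighborhoods rather than intersections.

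For the direction $\Rightarrow$, I would fix $w \in W$ and assume $\bigcup \mathcal{N}_{w} \subseteq X$. Set $Y = \{v \in W; \bigcup \mathcal{N}_{v} \subseteq X\}$ and suppose toward a contradiction that $\bigcap \mathcal{N}_{w} \nsubseteq Y$. Then there is $u \in \bigcap \mathcal{N}_{w}$ with $u \notin Y$, so $\bigcup \mathcal{N}_{u} \nsubseteq X$, hence some $r \in \bigcup \mathcal{N}_{u}$ with $r \notin X$. But $u \in \bigcap \mathcal{N}_{w}$ together with the $\Delta$-condition gives $\bigcup \mathcal{N}_{u} \subseteq \bigcup \mathcal{N}_{w} \subseteq X$, so $r \in X$ — contradiction. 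This establishes the displayed property.

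For the direction $\Leftarrow$, I would assume the displayed property and derive the $\Delta$-condition. Take $u \in \bigcap \mathcal{N}_{w}$. Apply the property with $X = \bigcup \mathcal{N}_{w}$ (which trivially satisfies $\bigcup \mathcal{N}_{w} \subseteq X$): we get $\bigcap \mathcal{N}_{w} \subseteq \{v \in W; \bigcup \mathcal{N}_{v} \subseteq \bigcup \mathcal{N}_{w}\}$. Since $u$ belongs to the left-hand side, $\bigcup \mathcal{N}_{u} \subseteq \bigcup \mathcal{N}_{w}$, which is exactly the $\Delta$-condition.

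I do not expect any serious obstacle here; the argument is essentially a transcription of Lemma 2.4 with $\bigcup$ in place of the inner $\bigcap$. The only mild subtlety is making sure that in the $\Leftarrow$ direction one instantiates $X$ correctly (taking $X = \bigcup \mathcal{N}_w$ rather than $\bigcap \mathcal{N}_w$), and in the $\Rightarrow$ direction that the hypothesis $u \in \bigcap \mathcal{N}_w$ is precisely what licenses the use of the $\Delta$-condition to conclude $\bigcup \mathcal{N}_u \subseteq \bigcup \mathcal{N}_w$. Everything else is routine set-theoretic chasing.
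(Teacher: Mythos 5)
Your proof is correct and follows essentially the same route as the paper: the same contradiction argument via the set $Y = \{v \in W; \bigcup \mathcal{N}_{v} \subseteq X\}$ for the forward direction, and the same instantiation $X = \bigcup \mathcal{N}_{w}$ for the converse. (Only a labeling quibble: the analogous lemma for the $\rightarrow$-condition is Lemma 2.1 in the paper, not 2.4.)
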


\begin{proof}

$\Rightarrow$

Suppose that there is $w \in W$ such that $\bigcup \mathcal{N}_{w} \subseteq X$. Put $Y = \{v \in W; \bigcup \mathcal{N}_{v} \subseteq X\}$ and assume that $\bigcap \mathcal{N}_{w} \nsubseteq Y$. Then we have $u \in \bigcap \mathcal{N}_{w}$ such that $u \notin Y$. Thus there is $r \in \bigcup \mathcal{N}_{u}$ such that $r \notin X$. But (by $\Delta$-condition) $\bigcup \mathcal{N}_{u} \subseteq \bigcup \mathcal{N}_{w} \subseteq X$ and this is contradiction.

$\Leftarrow$

Let $u \in \bigcap \mathcal{N}_{w}$. We can always say that $\bigcup \mathcal{N}_{w} \subseteq \bigcup \mathcal{N}_{w}$. Then $\bigcap \mathcal{N}_{w} \subseteq \{v \in W; \bigcup \mathcal{N}_{v} \subseteq \bigcup \mathcal{N}_{w}\}$. Then of course $\bigcup \mathcal{N}_{u} \subseteq \bigcup \mathcal{N}_{w}$.

\end{proof}

\subsection{Monotonicity of forcing in minimal neighborhood}

The following theorem is crucial for the intuitionistic aspect of our logic. It is analogous to the theorem about monotonicity of forcing in relational semantics. Note that we do not have such property for maximal neighborhoods.

\begin{tw}
If $w \Vdash \varphi$ then $\bigcap \mathcal{N}_{w} \subseteq V(\varphi)$.
\end{tw}

\begin{proof}
For $\land$ and $\lor$ this proof goes by induction over the complexity of formulas - but for $\rightarrow$ we must use $\rightarrow$-condition and for $\Delta$ we need $\Delta$-condition. Details are listed below.
\begin{enumerate}
\item $w \Vdash q$. Our expected conclusion is immediate (from the very definition of valuation).

\item $w \Vdash \varphi \rightarrow \psi$. It means that $\bigcap \mathcal{N}_{w} \subseteq \{z \in W; z \nVdash \varphi$ or $z \Vdash \psi \}$. On the other side we can write that $V(\varphi \rightarrow \psi) = \{u \in W; u \Vdash \varphi \rightarrow \psi\} = \{u \in W; \bigcap \mathcal{N}_{u} \subseteq \{z \in W; z \nVdash \varphi$ or $z \Vdash \psi\}\}$. But this clearly gives us (by $\rightarrow$-condition and Lemma 2.1) that $\bigcap \mathcal{N}_{w} \subseteq V(\varphi \rightarrow \psi)$.

\item $w \Vdash \Delta \varphi$. This means that $\bigcup \mathcal{N}_{w} \subseteq Y = \{v \in W; v \Vdash \varphi\}$. On the other side, $V(\Delta \varphi) = \{s \in W; s \Vdash \Delta \varphi\} = \{s \in W; \bigcup \mathcal{N}_{s} \subseteq Y\}$. Now assume that $\bigcap \mathcal{N}_{w} \nsubseteq V(\Delta \varphi)$. Then there exists $x \in \bigcap \mathcal{N}_{w}$ such that $x \notin V(\Delta \varphi)$. Now we have $y \in \bigcup \mathcal{N}_{x}$ such that $y \nVdash \varphi$. But $\bigcup \mathcal{N}_{x} \subseteq \bigcup \mathcal{N}_{w} \subseteq Y = V(\varphi)$. This is plain contradiction.

\end{enumerate}

\end{proof}

\subsection{Importance of $\Delta$-condition}

Without $\Delta$-condition Theorem 1 would be impossible. We can easily find a counter-example.

$W = \{w, v, z\}$, $\bigcap \mathcal{N}_{w} = \bigcup \mathcal{N}_{w} = \{w, v\}$, $\bigcap \mathcal{N}_{v} = \{v\}, \bigcup \mathcal{N}_{v} = \{v, z\}$, $\bigcap \mathcal{N}_{z} = \bigcup \mathcal{N}_{z} = \{z\}$, $V(\varphi) = \{w, v\}$

\begin{figure}[ht]
\centering
\includegraphics[height=6cm]{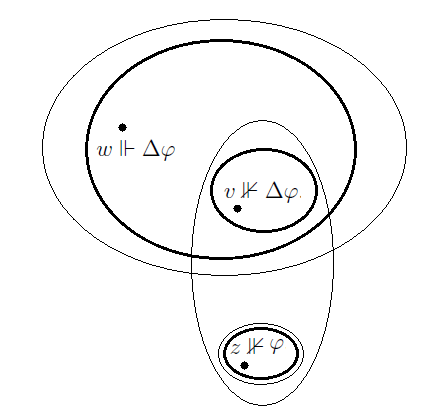}
\caption{Counter-example for Th. 2.1. when $\Delta$-condition is not introduced}
\label{fig:obrazek {pic1_delta}}
\end{figure}

Let us treat $v$ as an element of minimal $w$-neighborhood. Monotonicity theorem would imply that $v \Vdash \Delta \varphi$ (just like $w$). But $v \nVdash \Delta \varphi$ because of $z$ which belongs to maximal $v$-neighborhood and does not force $\varphi$. Note that $z \notin \bigcup \mathcal{N}_{w}$ so $\bigcup \mathcal{N}_{v} \nsubseteq \bigcup \mathcal{N}_{w}$. Thus, $\Delta$-condition is not satisfied.

\section{Intuitionistic and modal axioms}

One can easily check that purely intuitionistic axioms are satisfied in our system. In fact, someone familiar with intuitionistic calculus, Kripke models and upsets will quickly perceive this fact, having in his mind $\rightarrow$-condition and monotonicity theorem. Now a natural question arises: which well-known modal axioms and rules are satisfied in our logic? The answer is below.

\begin{enumerate}
\item \textbf{K}: $\Delta (\varphi \rightarrow \psi) \rightarrow (\Delta \varphi \rightarrow \Delta \psi)$.

Suppose that there is \textbf{nIML1}-model $\langle W, \mathcal{N}, V \rangle$ with an element $w \in W$ such that $w \nVdash$ \textbf{K}. This means that there exists $z \in \bigcap \mathcal{N}_{w}$ which forces $\Delta(\varphi \rightarrow \psi)$ but does not accept $\Delta \varphi \rightarrow \Delta \psi$. So for every $x \in \bigcup \mathcal{N}_{z}$, $x \Vdash \varphi \rightarrow \psi$ which means that for every $y \in \bigcap \mathcal{N}_{x}$ we have $y \nVdash \varphi$ or $y \Vdash \psi$.

On the other side there exists certain $v \in \bigcap \mathcal{N}_{z}$ accepting $\Delta \varphi$ but not $\Delta \psi$. Then $\bigcup \mathcal{N}_{v} \Vdash \varphi$ - but there is also $r \in \bigcup \mathcal{N}_{v}$ such that $r \nVdash \psi$.

But if $v$ is in minimal $z$-neighborhood, then $\bigcup \mathcal{N}_{v} \subseteq \bigcup \mathcal{N}_{z}$ (from $\Delta$-condition). This means that $r \in \bigcup \mathcal{N}_{z}$. Then $r \nVdash \varphi$ or $r \Vdash \psi$ - although we said that $r \Vdash \varphi$ and $r \nVdash \psi$. Clearly this is a contradiction.

\item \textbf{T}: $\Delta \varphi \rightarrow \varphi$.

Obviously this condition is always satisfied (because each element $w$ of our universe is contained in its minimal - and thus also maximal - neighborhood).

\item \textbf{MP} (\textit{modus ponens})

Suppose that this rule is not satisfied. Then we have $W, w \in W$ such that $w$ does not force certain $\psi$ although each point forces $\varphi \rightarrow \psi$ and $\varphi$ (for certain $\varphi$). In particular, $w \nVdash \varphi$ or $w \Vdash \psi$ - but $w \Vdash \varphi$, so $w \Vdash \psi$. Contradiction.

\item \textbf{RN} (\textit{necessity rule}): $\varphi$ $\Rightarrow$ $\Delta \varphi$

This condition is satisfied. Suppose the opposite, i.e. that there are $W, w \in W$ and $\varphi$ such that $w \nVdash \Delta \varphi$ although each point forces $\varphi$. Now it means that there is $v \in \bigcup \mathcal{N}_{w}$ such that $v \nVdash \varphi$. Contradiction because $\varphi$ is forced everywhere in $W$.

\end{enumerate}

At this moment we can stop our considerations because only this package of axioms will be later used in completeness proof. However, we can point out that also the following rules are valid:

\begin{enumerate}

\item \textbf{D}: $\Delta \varphi \rightarrow \lnot \Delta \lnot \varphi$

\item \textbf{RM} (\textit{monotonicity rule}): $\varphi \rightarrow \psi$ $\Rightarrow$ $\Delta \varphi \rightarrow \Delta \psi$

\end{enumerate}

Below we see some typical modal formulas which are not true in our environment.

\begin{enumerate}
\item \textbf{4}: $\Delta \varphi \rightarrow \Delta \Delta \varphi$

Note that axiom \textbf{4} becomes true if we put the following restriction on our frames: $v \in \bigcup \mathcal{N}_{w}$ $\Rightarrow$ $\bigcup \mathcal{N}_{v} \subseteq \bigcup \mathcal{N}_{w}$. In a bi-relational setting we could say that this is just transitivity of modal visibility (see [5] for comparison).

\item \textbf{B}: $\varphi \rightarrow \lnot \Delta \lnot \Delta \varphi$

\item \textbf{5}: $\lnot \Delta \varphi \rightarrow \Delta \lnot \Delta \varphi$

\item \textbf{GL}: $\Delta(\Delta \varphi \rightarrow \varphi) \rightarrow \Delta \varphi$.

\end{enumerate}

It is easy to find counter-examples. Our logic is rather weak. We can say that that \textbf{IML1} is just \textbf{IPC} with modal axioms \textbf{K}, \textbf{T}, and rules \textbf{MP}, \textbf{RN}. Of course we also hold that \textbf{IML1} is a set of formulas which are true in all those structures which we called \textbf{nIML1}-models. Thus we need completeness theorem.

\section{Canonical model and completeness}

In this section we show how to obtain completeness of system \textbf{IML1} with respect to neighborhood semantics. Canonical frames and models seem to be a natural tool for proving such results. At first, let us introduce some basic definitions and lemmas.

\begin{df}
We define \textbf{IML1}-theory in a standard manner: as a set of well-formed formulas which contains all axioms and is closed under deduction (i.e. under \textit{modus ponens} and modal rule \textbf{RN}).
\end{df}

Attention: later we shall omit symbols \textbf{IML1} and \textbf{nIML1} for convenience. Speaking informally for a moment, the reader should just remember that in this section everything happens with respect to \textbf{IML1} logic and \textbf{nIML1}-models.

\begin{lem} (see [3], Lemma A.1.)

If $w$ is a theory then $\varphi \rightarrow \psi \in w$ $\Leftrightarrow$ $\psi \in v$ for all theories $v$ such that $w \cup \{\varphi\} \subseteq v$.
\end{lem}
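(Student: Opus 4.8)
The plan is to prove the two implications separately. First I would handle the direction $(\Rightarrow)$: assume $\varphi \rightarrow \psi \in w$ and let $v$ be any theory with $w \cup \{\varphi\} \subseteq v$. Then both $\varphi \rightarrow \psi$ and $\varphi$ belong to $v$, and since $v$ is closed under deduction (in particular under \textit{modus ponens}), we get $\psi \in v$. This direction is immediate and uses nothing beyond the definition of a theory.

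The substantial direction is $(\Leftarrow)$. Here I would argue by contraposition: suppose $\varphi \rightarrow \psi \notin w$. The goal is to exhibit a theory $v \supseteq w \cup \{\varphi\}$ with $\psi \notin v$. The natural candidate is the smallest theory containing $w \cup \{\varphi\}$, i.e. the deductive closure $v = \mathrm{Cn}(w \cup \{\varphi\})$ (closed under \textbf{MP} and \textbf{RN}, and containing all axioms). I then need to show $\psi \notin v$. Equivalently, I want: if $w \cup \{\varphi\} \vdash \psi$ then $\varphi \rightarrow \psi \in w$.

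The key step — and the place where one must be slightly careful — is the deduction-theorem-style argument: from $w \cup \{\varphi\} \vdash \psi$ one wants to conclude $w \vdash \varphi \rightarrow \psi$, hence $\varphi \rightarrow \psi \in w$ since $w$ is itself a theory (deductively closed). The subtlety is that our deductive system has the rule \textbf{RN} ($\chi \Rightarrow \Delta\chi$), and a naive deduction theorem can fail in the presence of a necessitation rule. The clean way around this is to note that here \textbf{RN} is being applied \emph{inside the theory $w$}, i.e. as a closure condition on sets of formulas rather than as a proof rule that may be applied to the temporary hypothesis $\varphi$; since $w$ is already closed under \textbf{RN}, deductions from $w \cup \{\varphi\}$ only ever need \textbf{RN} on consequences that do not depend on $\varphi$ (those live in $w$ already), so the standard deduction theorem for \textbf{IPC} applies to the propositional part with $w$ treated as extra axioms. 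I expect this bookkeeping — justifying the deduction theorem relative to the theory $w$ rather than to the bare axioms — to be the main obstacle; everything else is routine. In fact, since the lemma is cited from [3] (Lemma A.1.), the cleanest exposition is simply to invoke that the deduction theorem holds for \textbf{IML1}-theories in exactly this relativized form, and then the equivalence follows: $\psi \in \mathrm{Cn}(w \cup \{\varphi\}) \Leftrightarrow w \cup \{\varphi\} \vdash \psi \Leftrightarrow w \vdash \varphi \rightarrow \psi \Leftrightarrow \varphi \rightarrow \psi \in w$, and intersecting over all such $v$ gives the statement.
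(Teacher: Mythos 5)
Your $(\Rightarrow)$ direction coincides with the paper's. For $(\Leftarrow)$, however, you take a genuinely different route: contraposition through the deductive closure $\mathrm{Cn}(w \cup \{\varphi\})$ plus a deduction theorem relativized to the theory $w$. The paper avoids all deduction-theorem machinery: it directly defines $v = \{\delta;\ \varphi \rightarrow \delta \in w\}$, checks $w \cup \{\varphi\} \subseteq v$ (via the axioms $\mu \rightarrow (\varphi \rightarrow \mu)$ and $\varphi \rightarrow \varphi$), and then the hypothesis $\psi \in v$ yields $\varphi \rightarrow \psi \in w$ immediately from the definition of $v$. That construction buys exactly what you identify as the main obstacle: no induction over derivations from the extra hypothesis $\varphi$ is ever performed, so the interaction of \textbf{RN} with that hypothesis never has to be analyzed (only the routine check that this particular $v$ is a theory).

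The genuine gap in your proposal is the step you yourself flag and then defer. You claim that deductions from $w \cup \{\varphi\}$ "only ever need \textbf{RN} on consequences that do not depend on $\varphi$", and on that basis invoke the relativized deduction theorem. Under the paper's literal definition of a theory (a set containing the axioms and closed under \textit{modus ponens} \emph{and} \textbf{RN}), this is not mere bookkeeping: the smallest theory containing $w \cup \{\varphi\}$ then contains $\Delta \varphi$, so your chain $\psi \in \mathrm{Cn}(w \cup \{\varphi\}) \Leftrightarrow w \vdash \varphi \rightarrow \psi \Leftrightarrow \varphi \rightarrow \psi \in w$ would, at $\psi := \Delta\varphi$, force $\varphi \rightarrow \Delta\varphi \in w$ for every theory $w$ --- precisely the principle the paper explicitly rejects as trivializing $\Delta$, and one that is semantically invalid. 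So the deduction theorem you want exists only if \textbf{RN} is treated as a rule of proof (applied to theorems, never to the temporary hypothesis), and your sketch gestures at this distinction without securing it; as written the argument would fail. To complete your route you must fix the consequence relation precisely and prove the relativized deduction theorem by induction on derivations (this is what the appeal to [3] is silently carrying), or else switch to the paper's pullback construction, which sidesteps the issue --- noting that the same clarification about \textbf{RN} is tacitly needed even there to verify that the constructed $v$ is a theory.
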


\begin{proof}

$\Rightarrow$

This direction is simple. Assume that $\varphi \rightarrow \psi \in w$. Let us consider theory $v$ such that $w \cup \{\varphi\} \subseteq v$. Then $\varphi \in v$ and $\varphi \rightarrow \psi \in v$. By \textit{modus ponens} we get $\psi \in v$.

$\Leftarrow$

Suppose that $v = \{\delta; \varphi \rightarrow \delta \in w\}$. Now we can say that $w \subseteq v$ because: if $\mu \in w$, then $w \vdash \varphi \rightarrow \mu$ (from intuitionistic axiom). Thus $\varphi \rightarrow \mu \in w$ (because theory is by definition deductively closed). So $\mu \in v$. Also $w \cup \{\varphi\} \subseteq v$ because $\varphi \rightarrow \varphi \in w$ and hence $\varphi \in v$. Then $\psi \in v$ so by the very definition of $v$ we have $\varphi \rightarrow \psi \in v$.

\end{proof}

Note that the lemma above can be considered as a semantic version of deduction theorem. It is important, because in modal logic we cannot use typical syntactic form. For example, it is impossible to transform necessity rule into axiom $\varphi \rightarrow \Delta \varphi$. More precisely, acceptance of such claim would lead us to trivial logic in which $\Delta$ would be useless. Of course, our analogue of deduction theorem holds not only for \textbf{IML1}. In fact, it is much more general.

In the next point we introduce the notion of \textit{prime theory}, repeating standard definition from intuitionistic calculus.

\begin{df}
A theory $w$ is said to be \textit{prime} if it satisfies the following conditions:

\begin{enumerate}

\item $\varphi \lor \psi \in w$ $\Leftrightarrow$ $\varphi \in w$ or $\psi \in w$

\item $\bot \notin w$ (i.e. $w$ is consistent)

\end{enumerate}
\end{df}

\begin{lem}
Each consistent theory $w_{\gamma}$ (not containing formula $\gamma$) can be extended to the prime theory $w'_{\gamma}$.
\end{lem}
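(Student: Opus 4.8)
The plan is to construct $w'_\gamma$ by a Lindenbaum-style argument, enumerating all formulas and adding them one at a time whenever doing so preserves the property that $\gamma$ is not derivable, then taking the union. The delicate point — and the reason this is a genuine lemma rather than a triviality — is that "consistent" here must really mean "$\gamma \notin w_\gamma$" (equivalently $w_\gamma \nvdash \gamma$), since $\bot$-consistency alone would not give us primeness in the presence of a weak implication; so I would first restate the hypothesis in that form.

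First I would fix an enumeration $\varphi_0, \varphi_1, \varphi_2, \ldots$ of all well-formed formulas and define an increasing chain of theories. Set $v_0$ to be the deductive closure of $w_\gamma$ (which still omits $\gamma$, since $w_\gamma$ is already deductively closed by the definition of theory, or else we note $w_\gamma \nvdash \gamma$ is preserved under closure). Given $v_n$ with $v_n \nvdash \gamma$, look at $\varphi_n$: if the deductive closure of $v_n \cup \{\varphi_n\}$ still does not prove $\gamma$, let $v_{n+1}$ be that closure; otherwise let $v_{n+1} = v_n$. Finally put $w'_\gamma = \bigcup_{n} v_n$.

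Then I would verify the three things required. (i) $w'_\gamma$ is a theory: it contains all axioms (they are in $v_0$) and is closed under \textbf{MP} and \textbf{RN} because any finite set of premises of an application of a rule already lies in some $v_n$, which is deductively closed, so the conclusion is in $v_n \subseteq w'_\gamma$; and $w'_\gamma \nvdash \gamma$ since a derivation of $\gamma$ would use only finitely many formulas, hence would already be a derivation from some $v_n$, contradicting $v_n \nvdash \gamma$. In particular $\bot \notin w'_\gamma$, for otherwise $\gamma$ would follow by \textit{ex falso}. (ii) Primeness, the disjunction property: suppose $\varphi \lor \psi \in w'_\gamma$ but $\varphi \notin w'_\gamma$ and $\psi \notin w'_\gamma$. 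Since $\varphi = \varphi_m$ and $\psi = \varphi_k$ were at some stage rejected, the closure of $w'_\gamma \cup \{\varphi\}$ proves $\gamma$ and the closure of $w'_\gamma \cup \{\psi\}$ proves $\gamma$ — here I use that $w'_\gamma$ itself is the union, so if $\varphi$ were consistent to add at \emph{every} stage it would have been added; more carefully, $\varphi \notin w'_\gamma$ together with deductive closure of $w'_\gamma$ forces $w'_\gamma \cup \{\varphi\} \vdash \gamma$, and likewise for $\psi$. By the deduction lemma (Lemma 4.1 above) $w'_\gamma \vdash \varphi \to \gamma$ and $w'_\gamma \vdash \psi \to \gamma$, hence $\varphi \to \gamma, \psi \to \gamma \in w'_\gamma$; combined with $\varphi \lor \psi \in w'_\gamma$ and the intuitionistic theorem $(\varphi \to \gamma) \to (\psi \to \gamma) \to (\varphi \lor \psi) \to \gamma$, three applications of \textbf{MP} give $\gamma \in w'_\gamma$, a contradiction. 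The converse inclusion $\varphi \in w'_\gamma$ or $\psi \in w'_\gamma$ $\Rightarrow$ $\varphi \lor \psi \in w'_\gamma$ is immediate from deductive closure and the intuitionistic axiom $\varphi \to \varphi \lor \psi$. (iii) $w_\gamma \subseteq v_0 \subseteq w'_\gamma$ and $\gamma \notin w'_\gamma$, as already noted.

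The only real obstacle is bookkeeping at the limit step — making sure that "$\varphi_n$ was rejected" really yields $w'_\gamma \cup \{\varphi_n\} \vdash \gamma$ rather than merely $v_n \cup \{\varphi_n\} \vdash \gamma$. This is handled by observing that once $\varphi_n$ is rejected at stage $n$, every later $v_m$ contains $v_n$, and $v_m \cup \{\varphi_n\} \vdash \gamma$ would contradict the rejection only if we were careless; the clean way is to note that if $\varphi_n \notin w'_\gamma$ at the end, then in particular the closure of $w'_\gamma \cup \{\varphi_n\}$ proves $\gamma$, because otherwise $\varphi_n$ would have been addable at its own stage (its closure with $v_n$ is contained in its closure with $w'_\gamma$)… wait, that inclusion goes the wrong way. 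The correct resolution: if $\varphi_n$ is rejected at stage $n$ then $v_n \cup \{\varphi_n\} \vdash \gamma$, so by finiteness of derivations and $v_n \subseteq w'_\gamma$ we get $w'_\gamma \cup \{\varphi_n\} \vdash \gamma$; this is monotone in the right direction and causes no trouble. So the argument is entirely standard; I would present it compactly, citing Lemma 4.1 for the deduction-theorem step and noting that \textbf{RN} plays no role in the primeness argument, only in checking that the union is a theory.
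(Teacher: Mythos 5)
Your proof is correct and follows essentially the same route as the paper: the paper invokes the ``standard method for countable languages'' (or Kuratowski--Zorn) to get a relatively maximal $\gamma$-avoiding theory and then proves primeness exactly as you do --- rejected disjunct $\Rightarrow$ adding it derives $\gamma$ $\Rightarrow$ Lemma 4.1 gives $\varphi \rightarrow \gamma, \psi \rightarrow \gamma \in w'$ $\Rightarrow$ the intuitionistic disjunction axiom and \textit{modus ponens} yield $\gamma \in w'$, a contradiction. You merely spell out the enumeration construction the paper leaves implicit, and your momentary worry about the direction of the inclusion at the limit stage is resolved correctly by monotonicity of derivability from $v_n \subseteq w'_\gamma$.
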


\begin{proof}
We can extend $w_{\gamma}$ to the relatively maximal theory $w'_{\gamma}$ using standard method for countable languages or even applying Lindenbaum's lemma based on Kuratowski-Zorn lemma. What is important, is to check that this new theory, established in this process, is actually prime. We will omit subscript $\gamma$ if the context provides sufficient clarity.

Suppose that $\varphi \in w'$ or $\psi \in w'$. Then we can use well-known tautologies $\varphi \rightarrow (\varphi \lor \psi)$ and $\psi \rightarrow (\varphi \lor \psi)$ to get our expected result (by \textit{modus ponens}).

This direction was simple but the second one is slightly more complicated. At first, let us introduce the notion of $\overline{X}$ - deductive closure of $X$, where $X$ is a set of formulas. Now assume that $\varphi \lor \psi \in w'$ but $\varphi \notin w'$ and $\psi \notin w'$. Consider $\gamma$ (i.e. formula for which $w'$ is relatively maximal) and then think about $\overline{w' \cup \{\varphi\}}$ and $\overline{w' \cup \{\psi\}}$. We show that $\gamma \in \overline{w' \cup \{\varphi\}}$ (and the same for $\overline{w' \cup \{\psi\}}$).

Suppose the opposite: that $\gamma \notin \overline{w' \cup \{\varphi\}}$. Then we can consider $\langle Th_{\gamma}, \subseteq \rangle$ - the set of all theories without $\gamma$. Clearly, $w' \subseteq \overline{w' \cup \{\varphi\}}$ and moreover $w' \neq \overline{w' \cup \{\varphi\}}$ (because of $\varphi$, for example). But at the same time $\overline{w' \cup \{\varphi\}} \in Th_{\gamma}$ so $w'$ cannot be maximal in $Th_{\gamma}$ (contradiction). Hence, $\gamma \in w' \cup \{\varphi\}$. Thus $\gamma \in v$ for all theories $v$ such that $\overline{w' \cup \{\varphi\}} \subseteq v$. Of course $w' \cup \{\varphi\} \subseteq \overline{w' \cup \{\varphi\}}$. Now we can use the deduction theorem and say that $\varphi \rightarrow \gamma \in w'$.

The same reasoning can be repeated for $\psi$. Then we use intuitionistic axiom to say that $\varphi \lor \psi \rightarrow \gamma \in w'$ and (by \textit{modus ponens}) we get that $\gamma \in w'$ which is false.

\end{proof}

Now we can introduce the notion of canonical model.

\begin{df}
We define canonical model as a triple $\langle W, \mathcal{N}, V \rangle$ where $W$, $\mathcal{N}$ and $V$ are defined as presented below.

\begin{enumerate}

\item $W$ is the set of all prime theories.

\item Neighborhood function for canonical model is a mapping $\mathcal{N}: W \rightarrow P(P(W))$ such that

$X \in \mathcal{N}_{w}$ $\Leftrightarrow$ $\bigcap \mathcal{N}_{w} \subseteq X \subseteq \bigcup \mathcal{N}_{w}$, where:

\begin{enumerate}

\item $\bigcap \mathcal{N}_{w} = \{v \in W: w \subseteq v\}$.

\item $\bigcup \mathcal{N}_{w} = \{v \in W: \Delta \varphi \in w \Rightarrow \varphi \in v\}$.
\end{enumerate}

\item $V$ is a valuation defined in standard manner, i.e. as a function $V: PV \rightarrow P(W)$ such that $w \in V(q)$ $\Leftrightarrow$ $q \in w$.

\end{enumerate}

\end{df}

Note that minimal $w$-neighborhood is just a collection of all supertheories of given theory $w$. In case of maximal $w$-neighborhood we simply take everything what we need, saying colloquially. As for the valuation, it satisfies our expected intuitionistic condition: if $w \in V(q)$ then $\bigcap \mathcal{N}_{w} \subseteq V(q)$.

\begin{lem}
Canonical neighborhood model is defined in a proper way.
\end{lem}

\begin{proof}

We must check conditions from the definition of frame:

\begin{enumerate}

\item $w \in \bigcap \mathcal{N}_{w}$. This restriction holds by the very definition of $\bigcap \mathcal{N}_{w}$ as a collection of supersets of theories.

\item $\bigcap \mathcal{N}_{w} \in \mathcal{N}_{w}$. The answer is positive because $\bigcap \mathcal{N}_{w} \subseteq \bigcap \mathcal{N}_{w}$ and $\bigcap \mathcal{N}_{w} \subseteq \bigcup \mathcal{N}_{w}$. The first inclusion is obvious, so we must check the second one. Suppose that $z \in \bigcap \mathcal{N}_{w}$. Then (as we shall show below) $\Delta$-condition holds, i.e. $\bigcup \mathcal{N}_{z} \subseteq \bigcup \mathcal{N}_{w}$. Now assume that $\Delta \varphi \in z$. Recall that $z$ is theory so it contains modal axiom \textbf{T} which says that $\Delta \varphi \rightarrow \varphi$. Thus (by \textit{modus ponens}) $\varphi \in z$. Then $z \in \bigcup \mathcal{N}_{z}$ which gives us that $z \in \bigcup \mathcal{N}_{w}$.

\item $\bigcap \mathcal{N}_{w} \subseteq X \subseteq \bigcup \mathcal{N}_{w}$ $\Rightarrow$ $X \in \mathcal{N}_{w}$. Relative superset axiom holds by the very definition of canonical neighborhood function.

\item $v \in \bigcap \mathcal{N}_{w}$ $\Rightarrow$ $\bigcap \mathcal{N}_{v} \subseteq \bigcap \mathcal{N}_{w}$. This condition also holds. Suppose that $v \in \bigcap \mathcal{N}_{w}$. Now take $u \in \bigcap \mathcal{N}_{v}$. Then $v \subseteq u$ but also $w \subseteq v$ - so $w \subseteq u$. Hence, $u \in \bigcap \mathcal{N}_{w}$.

\item $v \in \bigcap \mathcal{N}_{w}$ $\Rightarrow$ $\bigcup \mathcal{N}_{v} \subseteq \bigcup \mathcal{N}_{w}$. Suppose the opposite: that there are particular $w, v \in W$ such that $v \in \bigcap \mathcal{N}_{w}$ but $\bigcup \mathcal{N}_{v} \nsubseteq \bigcup \mathcal{N}_{w}$. So we have $u \in \bigcup \mathcal{N}_{v}$ which does not belong to $\bigcup \mathcal{N}_{w}$. Thus, if $\Delta \varphi \in v$ then $\varphi \in u$ but at the same time we can find certain $\psi$ such that $\Delta \psi \in w$ and $\psi \notin u$. But if $\Delta \psi \in w$, then $\Delta \psi \in v$ (because $w \subseteq v$). Hence, $\psi \in u$ (contradiction).

\end{enumerate}

\end{proof}

Now we must show the so-called \textit{fundamental theorem}, typical for any proof of canonical completeness.

\begin{tw}
If $\langle W, \mathcal{N}, V \rangle$ is a canonical model (defined as above) then for each $w \in W$ we have the following equivalence: $w \Vdash \varphi$ $\Leftrightarrow$ $\varphi \in w$.

\end{tw}

\begin{proof}
The proof goes by the induction over the complexity of formulas. We have two really important cases: $\rightarrow$ and $\Delta$.

\begin{enumerate}

\item Consider $\varphi := \gamma \rightarrow \psi$.

$\Rightarrow$

By contraposition assume that $\gamma \rightarrow \psi \notin w$. Deduction theorem and extension lemma allow us to say that there exists $v \in W$ such that $\gamma \in v, w \subseteq v$ and $\psi \notin v$. But if $w \subseteq v$, then $v \in \bigcap \mathcal{N}_{w}$. By induction hypothesis, $v \Vdash \gamma$ and $v \nVdash \psi$. Hence, $w \nVdash \gamma \rightarrow \psi$.

$\Leftarrow$

Suppose that $\gamma \rightarrow \psi \in w$. Assume that $\bigcap \mathcal{N}_{w} \subseteq \{v \in W: v \Vdash \gamma\}$. By induction $\gamma \in w$. Now take arbitrary $u \in \bigcap \mathcal{N}_{w}$. By the definition of minimal $w$-neighborhood we have $w \subseteq u$. Thus $\gamma \rightarrow \psi \in u$. This gives us that $\gamma \in u$ and $\gamma \rightarrow \psi \in u$. By \textit{modus ponens} $\psi \in u$ so we can say that $\bigcap \mathcal{N}_{w} \subseteq \{v \in W: v \Vdash \gamma \Rightarrow v \Vdash \psi\}$. Thus $w \Vdash \gamma \rightarrow \psi$.

\item Consider $\varphi := \Delta \psi$

$\Rightarrow$

Suppose that $\Delta \psi \notin w$. Let us think about $\bigcup \mathcal{N}_{w} = \{v \in W: \Delta \psi \in w \Rightarrow \psi \in v\}$. If $\Delta \psi \notin w$ then $\psi \notin \{\gamma: \Delta \gamma \in w\}$. We designate this last set as $\Delta^{-1} w$. It is, in particular, a theory of \textbf{IML1}. Check this: let $\delta \rightarrow \xi \in \Delta^{-1} w$ and $\delta \in \Delta^{-1} w$. Then $\Delta(\delta \rightarrow \xi) \in w$ and $\Delta \delta \in w$. Because of \textbf{K} axiom we can say that $\Delta \xi \in w$ - so $\xi \in \Delta^{-1}w$. We have shown that $\Delta^{-1}$ is closed on \textit{modus ponens}. Now check \textbf{RN} rule. Let $\xi \in \Delta^{-1}w$. This gives us that $\Delta \xi \in w$. But $w$ is already a theory, so it holds \textbf{RN}. Thus $\Delta \Delta \xi \in w$. From this $\Delta \xi \in \Delta^{-1}w$.

Now we can go back to the main part of the proof. By extension lemma we can find prime theory $u$ such that $\Delta^{-1} w \subseteq u$ and $\psi \notin u$. Moreover, $u \in \bigcup \mathcal{N}_{w}$ - because if not, then there would be formula $\gamma$ such that $\Delta \gamma \in w$ and $\gamma \notin u$. But this would mean that $\gamma \in \Delta^{-1}w$ and then $\gamma \in u$ (because, as we said, $\Delta^{-1}w \subseteq u$).

Finally, we have established $u \in \bigcup \mathcal{N}_{u}$ such that $\psi \notin u$ which means - by induction hypothesis - that $u \nVdash \varphi$. Thus $w \nVdash \Delta \psi$, i.e. $w \nVdash \varphi$.

$\Leftarrow$

Suppose that $w \nVdash \Delta \psi$. So there exists $v \in \bigcup \mathcal{N}_{w}$ such that $v \nVdash \psi$. By induction hypothesis $\psi \notin v$. It follows that $\Delta \psi \notin w$. Suppose the opposite: if $\Delta \psi \in w$ then from the definition of $\bigcup \mathcal{N}_{w}$ we have $\psi \in v$.

\end{enumerate}

\end{proof}

\textit{(Completeness)}
Now our method is standard. Suppose that $w$ is a theory and $w \nvdash \varphi$. In particular this means that $\varphi \notin w$. Then we can extend $w$ to the relatively prime theory $v$ such that $w \subseteq v$ and $\varphi \notin v$. Then for each $\psi \in w$, $v \Vdash \psi$ and $v \nVdash \varphi$. The last statement means in particular that $\varphi$ is not a semantical consequence of $w$.

\section{Bi-relational point of view}

\subsection{Structure and model}
Although we believe that \textit{neighborhood language} is quite comfortable to speak about intuitionistic modal logic (especially if we are interested mainly in minimal and maximal neighborhoods), it is possible to treat our models as bi-relational ones. In fact, this approach is very typical tool for mathematicians working upon modal logics based on intuitionistic core.

It is not difficult to imagine how should the proper bi-relational \textbf{IML1}-semantics look like. Below we present essential formalisation.

\begin{df}
Bi-relational \textbf{IML1}-frame (\textbf{brIML1}-frame) is a triple $F^{\leq, R} = \langle W, \leq, R \rangle$ where $W$ is a non-empty set (of worlds), $\leq$ is a partial order on $W \times W$ and $R$ is a binary relation on $W \times W$ such that:

\begin{enumerate}
\item $\forall {w, v \in W}$ $w \leq v \Rightarrow w R v$
\item $\forall {w, v, u \in W}$ $w \leq v \Rightarrow \left( v R u \Rightarrow w R u \right)$

\end{enumerate}
\end{df}

Alternatively, we can say that $R$ is reflexive and clause 2) holds. The reader can suspect that the second clause is analogous to the $\Delta$-condition in neighborhood frame. Of course this is true, as we shall see later. Now let us define the notion of \textbf{brIML1}-model.

\begin{df}
Bi-relational \textbf{IML1}-model (\textbf{brIML1}-model) is a quadruple $M^{\leq, R} = \langle W, \leq, R, V \rangle$ where $\langle W, \leq, R \rangle$ is a \textbf{brIML1}-frame and $V$ is a function from $PV$ into $P(W)$ satisfying the following condition: if $w \in V(q)$ then for each $v \in W$ such that $w \leq v$ we have $v \in V(q)$.
\end{df}

\begin{df}
Forcing of formulas in a world $w \in W$ is defined inductively in a manner typical for relational (Kripke) models. The only case which is untypical and requires use of relation $R$ is the modal case.

$w \Vdash \Delta \varphi$ $\Leftrightarrow$ $\forall {v \in W}$ $w R v \Rightarrow v \Vdash \varphi$

\end{df}

\begin{lem} Monotonicity of forcing holds in $M^{\leq, R}$, i.e. if $w \Vdash \varphi$ then $\forall {v \in W}$ $w \leq v \Rightarrow v \Vdash \varphi$.
\end{lem}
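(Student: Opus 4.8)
The plan is to prove the monotonicity lemma by induction on the complexity of $\varphi$, exactly mirroring the standard Kripke-model argument but paying attention to the modal case, where the frame condition on $R$ does the work. Throughout I fix $w, v \in W$ with $w \leq v$ and assume $w \Vdash \varphi$; I want $v \Vdash \varphi$.

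For the base case $\varphi = q \in PV$, this is immediate from the valuation condition in Definition 5.2: $w \in V(q)$ and $w \leq v$ give $v \in V(q)$. The case $\varphi = \bot$ is vacuous. For $\varphi = \psi_1 \land \psi_2$ and $\varphi = \psi_1 \lor \psi_2$ the argument is the routine one: apply the induction hypothesis componentwise. For $\varphi = \psi_1 \rightarrow \psi_2$, suppose $w \Vdash \psi_1 \rightarrow \psi_2$ and take any $u \geq v$ with $u \Vdash \psi_1$; since $\leq$ is transitive we have $w \leq u$, so from $w \Vdash \psi_1 \rightarrow \psi_2$ we get $u \Vdash \psi_2$. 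Hence $v \Vdash \psi_1 \rightarrow \psi_2$. (Negation is the special case $\psi_2 = \bot$ and needs no separate treatment.)

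The modal case $\varphi = \Delta \psi$ is the one place where the frame axioms enter. Assume $w \Vdash \Delta \psi$, i.e. for every $t$ with $wRt$ we have $t \Vdash \psi$. I must show $v \Vdash \Delta \psi$, so take any $u$ with $vRu$. By clause 2) of Definition 5.1, $w \leq v$ and $vRu$ yield $wRu$; hence $u \Vdash \psi$ by the assumption on $w$. (Note that for this case the induction hypothesis is not even needed — $\psi$ is carried along unchanged — so the point is purely the $\leq$/$R$ interaction expressed by the $\Delta$-condition analogue.) This completes the induction.

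I do not expect a genuine obstacle here; the only thing to be careful about is not to conflate the two relations and to invoke transitivity of $\leq$ in the $\rightarrow$-case and clause 2) of the frame definition in the $\Delta$-case, rather than the reflexivity clause 1), which is not what is needed for monotonicity.
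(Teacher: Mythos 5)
Your proof is correct and follows essentially the same route as the paper: induction on the formula, with the modal case handled exactly as in the paper by using clause 2) of the frame definition ($w \leq v$ and $vRu$ imply $wRu$) to transfer $\psi$ from the $R$-successors of $w$ to those of $v$. The additional detail you give for the propositional and implication cases is just the standard Kripke argument that the paper leaves implicit.
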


\begin{proof}
The proof goes by induction and the only case which is important, is modal case. Suppose that $w \Vdash \Delta \psi$. Take $v \in W$ such that $w \leq v$. Now suppose that $v R u$. It means that $w R u$. But if $\Delta \psi$ is accepted in $w$ (as we assumed) then $\psi$ is accepted in each $x \in W$ such that $w R x$ (by the definition of forcing). Thus, $u \Vdash \psi$ and then $v \Vdash \Delta \varphi$.
\end{proof}

As we could see, clause 2 from the definition of \textbf{brIML1}-frame was crucial for this proof - just like $\Delta$-condition was crucial for the analogous proof in neighborhood environment.

In fact, our class of \textbf{brIML1}-frames (models) is identical with a class of \textit{condensed} \textbf{H} $\Box$ frames (models), introduced by Bozic and Dosen in [1]. They have obtained completeness result but (basically) with respect to slightly different axiomatic. Precisely, they used modal axioms $\Delta \varphi \land \Delta \psi \rightarrow \Delta (\varphi \land \psi)$, $\Delta (\varphi \rightarrow \varphi)$ and the rule $\varphi \rightarrow \psi \Vdash \Delta \varphi \rightarrow \Delta \psi$. To avoid ambiguity, we present those theorems with $\Delta$, although it is clear that they used typical $\Box$ symbol.

Now we can establish translations between neighborhood and bi-relational models. In other words, we show that it is possible to change our point of view, preserving frame properties and forcing of formulas. The fact that the same formulas are true in every world of each model based on given universe is named \textit{pointwise equivalency}.

\subsection{From bi-relational to neighborhood model}

\begin{tw}
Suppose that $M^{\leq, R} = \langle W, \leq, R, V \rangle$ is a \textbf{brIML1}-model. Then there exists neighborhood model $M^{\mathcal{N}} = \langle W, \mathcal{N}, V \rangle$ such that both models are pointwise equivalent.
\end{tw}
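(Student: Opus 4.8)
The plan is to construct the neighborhood function $\mathcal{N}$ directly from the relations $\leq$ and $R$, mimicking the structure of the canonical model. Given a \textbf{brIML1}-model $M^{\leq, R} = \langle W, \leq, R, V \rangle$, I would define for each $w \in W$:
$$\bigcap \mathcal{N}_{w} = \{v \in W: w \leq v\}, \qquad \bigcup \mathcal{N}_{w} = \{v \in W: w R v\},$$
and then set $X \in \mathcal{N}_{w} \Leftrightarrow \bigcap \mathcal{N}_{w} \subseteq X \subseteq \bigcup \mathcal{N}_{w}$. The valuation $V$ is carried over unchanged. The first thing to check is that this really is well-defined, i.e.\ that the intersection and union of the family $\mathcal{N}_{w}$ so defined actually coincide with the two sets written above: since $\leq$ is reflexive, $w \in \bigcap \mathcal{N}_{w}$, so the family is non-empty and its smallest member is indeed $\{v: w \leq v\}$; and because $w \leq v \Rightarrow wRv$ (clause 1), the relativized superset axiom makes $\{v: wRv\}$ itself a neighborhood, so it is the largest member and equals the union.

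Next I would verify the five frame conditions of Definition 2.1. Conditions (a) and (b) are immediate from reflexivity of $\leq$ (hence of $R$) as just noted. Condition (c), the $\rightarrow$-condition, amounts to transitivity of $\leq$: if $u \in \bigcap \mathcal{N}_{w}$ then $w \leq u$, and $x \in \bigcap \mathcal{N}_{u}$ gives $u \leq x$, so $w \leq x$. Condition (d) is the relativized superset axiom, true by construction. Condition (e), the $\Delta$-condition, is precisely clause 2 of the \textbf{brIML1}-frame definition: $u \in \bigcap \mathcal{N}_{w}$ means $w \leq u$, and then $w \leq u \Rightarrow (uRx \Rightarrow wRx)$ gives $\bigcup \mathcal{N}_{u} \subseteq \bigcup \mathcal{N}_{w}$. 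I would also note that the valuation condition of Definition 2.2 (if $w \in V(q)$ then $\bigcap \mathcal{N}_{w} \subseteq V(q)$) follows directly from the monotonicity condition on $V$ in the \textbf{brIML1}-model together with the definition of $\bigcap \mathcal{N}_{w}$.

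The real content is pointwise equivalence: for every $w \in W$ and every formula $\varphi$, $w \Vdash_{M^{\leq, R}} \varphi \Leftrightarrow w \Vdash_{M^{\mathcal{N}}} \varphi$. This I would prove by induction on the complexity of $\varphi$. The propositional variable and $\bot$ cases are trivial since $V$ is shared; $\land$ and $\lor$ are routine. For $\rightarrow$: in the neighborhood model $w \Vdash \varphi \rightarrow \psi$ iff $\bigcap \mathcal{N}_{w} \subseteq \{v: v \nVdash \varphi \text{ or } v \Vdash \psi\}$, i.e.\ for all $v$ with $w \leq v$ either $v \nVdash \varphi$ or $v \Vdash \psi$, which — using the induction hypothesis on $\varphi$ and $\psi$ and the standard Kripke clause for $\rightarrow$ — is exactly $w \Vdash_{M^{\leq, R}} \varphi \rightarrow \psi$. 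For $\Delta$: $w \Vdash_{M^{\mathcal{N}}} \Delta \varphi$ iff $\bigcup \mathcal{N}_{w} \subseteq \{v: v \Vdash \varphi\}$ iff for all $v$ with $wRv$, $v \Vdash \varphi$, which by the induction hypothesis and Definition 5.5 is $w \Vdash_{M^{\leq, R}} \Delta \varphi$.

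I do not expect a serious obstacle here; the construction is essentially a dictionary between the two presentations, and the correspondences line up one-to-one (clause 1 $\leftrightarrow$ superset axiom making $R$-successors a neighborhood; clause 2 $\leftrightarrow$ $\Delta$-condition; transitivity of $\leq$ $\leftrightarrow$ $\rightarrow$-condition). The one spot requiring a little care is the well-definedness step — confirming that $\bigcap \mathcal{N}_{w}$ and $\bigcup \mathcal{N}_{w}$ as defined are genuinely the intersection and union of the family $\mathcal{N}_{w}$, rather than just some designated subsets — since the forcing clauses in Definition 2.3 refer to $\bigcap \mathcal{N}_{w}$ and $\bigcup \mathcal{N}_{w}$ computed from $\mathcal{N}_{w}$ itself. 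Once that is pinned down, the rest is bookkeeping.
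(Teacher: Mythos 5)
Your construction is exactly the one the paper uses (minimal neighborhood as the $\leq$-upset, maximal as the $R$-successor set, neighborhoods as everything in between), and your verification of the five frame conditions and the inductive pointwise-equivalence argument match the paper's proof step for step. The extra care you take over well-definedness (that the designated sets really are the intersection and union of the family $\mathcal{N}_w$) is a point the paper leaves implicit, but it does not change the route.
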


\begin{proof}
Let us define (for each $w \in W$) two sets:

\begin{enumerate}

\item $\bigcap \mathcal{N}_{w} = \{v \in W; w \leq v\}$
\item $\bigcup \mathcal{N}_{w} = \{v \in W; w R v\}$

\end{enumerate}

Now define function $\mathcal{N}: W \rightarrow P(P(W))$ such that $X \in \mathcal{N}_{w}$ $\Leftrightarrow$ $\bigcap \mathcal{N}_{w} \subseteq X \subseteq \bigcup \mathcal{N}_{w}$. We claim that $\langle W, \mathcal{N}, V \rangle$ is a \textbf{nIML1}-model. Let us check five properties of neighborhood frame:

\begin{enumerate}

\item $w \in \bigcap \mathcal{N}_{w}$ - this condition is satisfied just because $w \leq w$.

\item $\bigcap \mathcal{N}_{w} \in \mathcal{N}_{w}$ - this condition is satisfied. Of course we must show that $\bigcap \mathcal{N}_{w} \subseteq \bigcup \mathcal{N}_{w}$. Suppose that $u \in \bigcap \mathcal{N}_{w}$. It means that $w \leq u$. But then - by the definition of $R$ - $w R u$. Thus $u \in \bigcup \mathcal{N}_{w}$.

\item $u \in \bigcap \mathcal{N}_{w}$ $\Rightarrow$ $\bigcap \mathcal{N}_{u} \subseteq \bigcap \mathcal{N}_{w}$ - this is true. Suppose that $u \in \bigcap \mathcal{N}_{w}$. Thus $w \leq u$. Now if certain $v \in \bigcap \mathcal{N}_{u}$, then $w \leq u \leq v$. Hence, $v \in \bigcap \mathcal{N}_{w}$.

\item $\bigcap \mathcal{N}_{w} \subseteq X \subseteq \bigcup \mathcal{N}_{w}$ $\Rightarrow$ $X \in \bigcap \mathcal{N}_{w}$ - this is obvious (by the very definition of $\mathcal{N}$).

\item $u \in \bigcap \mathcal{N}_{w}$ $\Rightarrow$ $\bigcup \mathcal{N}_{u} \subseteq \bigcup \mathcal{N}_{w}$ - this is $\Delta$-condition. Assume that $u \in \bigcap \mathcal{N}_{w}$, i.e. $w \leq u$. Let $v \in \bigcup \mathcal{N}_{u}$ which means that $u R v$. Then, by the definition of $R$, we have also that $w R v$. Hence, $v \in \bigcup \mathcal{N}_{w}$.

\end{enumerate}

Pointwise equivalency can be proved by induction on the construction of formula. In fact, it is quite obvious if we remember about strict correspondence between minimal (maximal) neighborhoods and relations $\leq, R$. Moreover, we did not change valuation, preparing neighborhood model, so in each world the same propositional variables are accepted.

\end{proof}

\subsection{From neighborhood model to the bi-relational one}

\begin{tw}
Suppose that $M^{\mathcal{N}} = \langle W, \mathcal{N}, V \rangle$ is an \textbf{brIML1} neighborhood model. Then there exists bi-relational model $M^{\leq, R} = \langle W, \leq, R, V \rangle$ such that both models are pointwise equivalent.
\end{tw}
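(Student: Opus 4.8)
The plan is to invert the construction from Theorem 5.2: starting from the neighborhood model $M^{\mathcal{N}} = \langle W, \mathcal{N}, V \rangle$, define the two relations directly from the minimal and maximal neighborhoods. Concretely, I would set $w \leq v$ if and only if $v \in \bigcap \mathcal{N}_w$ (equivalently, $\bigcap \mathcal{N}_v \subseteq \bigcap \mathcal{N}_w$, using the equivalent formulation of the $\rightarrow$-condition), and $w R v$ if and only if $v \in \bigcup \mathcal{N}_w$. The valuation $V$ is kept unchanged. Then I must verify that $\langle W, \leq, R \rangle$ is a \textbf{brIML1}-frame and that $\langle W, \leq, R, V \rangle$ is pointwise equivalent to $M^{\mathcal{N}}$.

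First I would check that $\leq$ is a partial order. Reflexivity is immediate from condition 2(a) of the frame definition ($w \in \bigcap \mathcal{N}_w$). Transitivity follows from the $\rightarrow$-condition: if $v \in \bigcap \mathcal{N}_w$ and $u \in \bigcap \mathcal{N}_v$, then $\bigcap \mathcal{N}_v \subseteq \bigcap \mathcal{N}_w$, hence $u \in \bigcap \mathcal{N}_w$. Antisymmetry is the one point that needs care: if $v \in \bigcap \mathcal{N}_w$ and $w \in \bigcap \mathcal{N}_v$ then by the $\rightarrow$-condition $\bigcap \mathcal{N}_w \subseteq \bigcap \mathcal{N}_v$ and $\bigcap \mathcal{N}_v \subseteq \bigcap \mathcal{N}_w$, so $\bigcap \mathcal{N}_w = \bigcap \mathcal{N}_v$, which does not by itself force $w = v$ unless we work modulo the equivalence $w \sim v \Leftrightarrow \bigcap \mathcal{N}_w = \bigcap \mathcal{N}_v$; I expect the paper intends either that identification, or a tacit assumption that $\leq$ is already antisymmetric on the given frame, and I would state the quotient construction explicitly if needed. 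Next, the two frame conditions: clause 1 ($w \leq v \Rightarrow w R v$) is exactly property 2(b), since $\bigcap \mathcal{N}_w \subseteq \bigcup \mathcal{N}_w$; clause 2 ($w \leq v$ and $v R u$ imply $w R u$) is exactly the $\Delta$-condition, since $v \in \bigcap \mathcal{N}_w$ gives $\bigcup \mathcal{N}_v \subseteq \bigcup \mathcal{N}_w$, so $u \in \bigcup \mathcal{N}_v$ yields $u \in \bigcup \mathcal{N}_w$. I would also note that the persistence condition on $V$ carries over directly from the intuitionistic valuation condition $w \in V(q) \Rightarrow \bigcap \mathcal{N}_w \subseteq V(q)$.

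Finally, pointwise equivalency is proved by induction on formula complexity. The atomic and Boolean cases coincide because the bi-relational forcing clauses for $\land, \lor, \rightarrow$ are stated over the $\leq$-successors, which are precisely the elements of $\bigcap \mathcal{N}_w$; in particular the $\rightarrow$-clause $w \Vdash \varphi \rightarrow \psi \Leftrightarrow \bigcap \mathcal{N}_w \subseteq \{v; v \nVdash \varphi \text{ or } v \Vdash \psi\}$ matches the Kripke clause verbatim once $\leq$ is read off $\bigcap \mathcal{N}$. For the modal case, $w \Vdash \Delta \varphi$ in the bi-relational model means $v \Vdash \varphi$ for all $v$ with $w R v$, i.e.\ for all $v \in \bigcup \mathcal{N}_w$, which is exactly the neighborhood clause $\bigcup \mathcal{N}_w \subseteq \{v; v \Vdash \varphi\}$; the induction hypothesis then closes the argument. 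The main obstacle I anticipate is purely the antisymmetry issue for $\leq$: unlike in the bi-relational-to-neighborhood direction, here we must guarantee the relation we extract is genuinely a partial order, and this either requires passing to the quotient by $\bigcap \mathcal{N}_w = \bigcap \mathcal{N}_v$ (checking that $\mathcal{N}$, $R$, and $V$ all descend to the quotient, which they do since they depend only on the minimal/maximal neighborhoods) or adding antisymmetry as a hypothesis; everything else is a routine unwinding of definitions mirroring Theorem 5.2.
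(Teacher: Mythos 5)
Your proposal is correct and follows essentially the same route as the paper: define $w \leq v \Leftrightarrow v \in \bigcap \mathcal{N}_{w}$ and $w R v \Leftrightarrow v \in \bigcup \mathcal{N}_{w}$, verify the two frame clauses via $\bigcap \mathcal{N}_{w} \subseteq \bigcup \mathcal{N}_{w}$ and the $\Delta$-condition, and get pointwise equivalence by a routine induction. Your antisymmetry worry is a fair observation about the paper rather than a gap in your argument --- the paper's own proof only claims $\leq$ is a preorder even though its Definition 5.1 asks for a partial order, so your suggested quotient (or weakening the definition to a preorder) is exactly the right way to patch that.
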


\begin{proof}
Let us define two relations on $W \times W$, $\leq$ and $R$:

\begin{enumerate}
\item $w \leq v$ $\Leftrightarrow$ $v \in \bigcap \mathcal{N}_{w}$

\item $w R v$ $\Leftrightarrow$ $v \in \bigcup \mathcal{N}_{w}$

\end{enumerate}

It is easy to show that $\leq$ is actually a preorder. Let us check two conditions which combine $\leq$ with $R$.

\begin{enumerate}

\item $\forall {w, v \in W}$ $w \leq v \Rightarrow w R v$ - this condition is satisfied because minimal $w$-neighborhood is contained in the maximal one.

\item $\forall {w, v, u \in W}$ $w \leq v \Rightarrow \left( v R u \Rightarrow w R u \right)$ - this condition is satisfied because of $\Delta$-condition. Suppose that $w \leq v$ and $v R u$. Thus $v \in \bigcap \mathcal{N}_{w}$ and $u \in \bigcup \mathcal{N}_{v}$. But $\bigcup \mathcal{N}_{v} \subseteq \bigcup \mathcal{N}_{w}$ so $w R u$.

\end{enumerate}

The last thing is pointwise equivalency. The proof goes by induction on the complexity of formulas.

\end{proof}

\subsection{Finite model property}

In this section we show that \textbf{IML1} logic has finite model property. We achieve this result by means of filtration, working in bi-relational framework (although it would be possible to use neighborhood environment). Check Hashimoto [2] and Takano [10] to study other examples of such approach in intuitionistic modal logic. Attention: we omit superscript ${}^{\leq, R}$ later in this subsection, when speaking about bi-relational frames.

\begin{df}
Consider \textbf{brIML1}-model $M = \langle W, \leq, R, V \rangle$ and formula $\gamma$ which is not tautology, i.e. there exists $w \in W$ such that $w \nVdash \gamma$. Let us define: $\Sigma = Sub(\gamma)$, which means that $\Sigma$ is the set of all subformulas of $\gamma$.
\end{df}

\begin{df}
We define the equivalence relation $\sim$ on $W$ as follows: $w \sim v$ iff $w \Vdash \alpha$ $\Leftrightarrow$ $v \Vdash \alpha$ for every $\alpha \in \Sigma$. We denote by $[w]$ the equivalence class of an element $w$.
\end{df}

\begin{df}
Filtered \textbf{brIML1}-frame $F_{\Sigma} = \langle W, \leq_{\Sigma}, R_{\Sigma} \rangle$ is defined as follows:

\begin{enumerate}

\item $W_{\Sigma} = \{ [w]; w \in W\}$

\item $[w] \leq_{\Sigma} [v]$ $\Leftrightarrow$ if $w \Vdash \alpha$, then $v \Vdash \alpha$ for each $\alpha \in \Sigma$

\item $[w] R_{\Sigma} [v]$ $\Leftrightarrow$: if $w \Vdash \Delta \beta$, then $v \Vdash \beta$ for each $\Delta \beta \in \Sigma$

\end{enumerate}

\end{df}

Of course $W_{\Sigma}$ is finite (because $\Sigma$ is finite). But we must assure that we obtained proper \textbf{brIML1}-structure. Fortunately, we have the following theorem:

\begin{tw}
Filtered frame $F_{\Sigma}$, defined as above, is a well-defined \textbf{brIML1}-frame.
\end{tw}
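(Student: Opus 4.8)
The plan is to verify that $F_{\Sigma} = \langle W_{\Sigma}, \leq_{\Sigma}, R_{\Sigma} \rangle$ satisfies all the requirements of Definition 5.1: namely that $\leq_{\Sigma}$ is a partial order, that $w \leq_{\Sigma} v$ implies $w R_{\Sigma} v$, and that $w \leq_{\Sigma} v$ implies $(v R_{\Sigma} u \Rightarrow w R_{\Sigma} u)$. There is also a preliminary well-definedness issue to dispatch: since $\leq_{\Sigma}$ and $R_{\Sigma}$ are defined in terms of representatives $w, v$ rather than the classes $[w], [v]$, I must check the definitions do not depend on the choice of representative. This follows immediately from the definition of $\sim$: if $w \sim w'$ and $v \sim v'$, then $w$ and $w'$ force exactly the same formulas of $\Sigma$, likewise $v$ and $v'$, so the conditions ``$w \Vdash \alpha \Rightarrow v \Vdash \alpha$ for all $\alpha \in \Sigma$'' and ``$w' \Vdash \alpha \Rightarrow v' \Vdash \alpha$ for all $\alpha \in \Sigma$'' are equivalent; the same argument works for $R_{\Sigma}$ using the subformulas of the shape $\Delta\beta$.

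Next I would check that $\leq_{\Sigma}$ is a partial order. Reflexivity and transitivity are immediate from the definition (the implication ``$w \Vdash \alpha \Rightarrow w \Vdash \alpha$'' is trivial, and implications compose). Antisymmetry is the point where the quotient construction does its work: if $[w] \leq_{\Sigma} [v]$ and $[v] \leq_{\Sigma} [w]$, then $w$ and $v$ force the same formulas of $\Sigma$, hence $w \sim v$, hence $[w] = [v]$. For the first frame condition, suppose $[w] \leq_{\Sigma} [v]$; I must show $[w] R_{\Sigma} [v]$, i.e. for each $\Delta\beta \in \Sigma$, if $w \Vdash \Delta\beta$ then $v \Vdash \beta$. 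From $w \Vdash \Delta\beta$ and reflexivity of $R$ in the original model (clause 1 of Definition 5.1 gives $w R w$), we get $w \Vdash \beta$; and since $\Delta\beta \in \Sigma$ forces $\beta \in \Sigma$ (as $\Sigma = Sub(\gamma)$ is subformula-closed), the hypothesis $[w] \leq_{\Sigma} [v]$ applied to $\alpha := \beta$ yields $v \Vdash \beta$.

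The last and most delicate condition is clause 2: assuming $[w] \leq_{\Sigma} [v]$ and $[v] R_{\Sigma} [u]$, show $[w] R_{\Sigma} [u]$. So fix $\Delta\beta \in \Sigma$ with $w \Vdash \Delta\beta$; I need $u \Vdash \beta$. The natural route is to argue that $v \Vdash \Delta\beta$ and then invoke $[v] R_{\Sigma} [u]$. To get $v \Vdash \Delta\beta$, I would use $[w] \leq_{\Sigma} [v]$ with the formula $\alpha := \Delta\beta$ — but this requires $\Delta\beta \in \Sigma$, which is exactly our hypothesis. Hence $w \Vdash \Delta\beta$ and $\Delta\beta \in \Sigma$ give $v \Vdash \Delta\beta$ via $\leq_{\Sigma}$, and then $[v] R_{\Sigma} [u]$ together with $\Delta\beta \in \Sigma$ gives $u \Vdash \beta$, as desired. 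The main obstacle — or rather the only place requiring care — is keeping track of which formulas are guaranteed to lie in $\Sigma$ at each step: the whole argument hinges on $\Sigma$ being closed under subformulas, so that both $\Delta\beta \in \Sigma$ and $\beta \in \Sigma$ are available whenever $\Delta\beta$ is a subformula of $\gamma$. Once this bookkeeping is in place, every step is a one-line implication-chase, and finiteness of $W_{\Sigma}$ was already noted to follow from finiteness of $\Sigma$.
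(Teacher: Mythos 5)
Your proof is correct and follows essentially the same route as the paper: check the order properties of $\leq_{\Sigma}$ and the two frame clauses by chasing the definitions of $\leq_{\Sigma}$ and $R_{\Sigma}$, using that $\Sigma = Sub(\gamma)$ is subformula-closed. The only (harmless) deviations are that you are more explicit than the paper about representative-independence and antisymmetry, and in the clause $[w] \leq_{\Sigma} [v] \Rightarrow [w] R_{\Sigma} [v]$ you apply reflexivity of $R$ at $w$ and transfer $\beta$ via $\leq_{\Sigma}$, whereas the paper transfers $\Delta\beta$ via $\leq_{\Sigma}$ and applies reflexivity at $v$ --- both steps are equally valid.
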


\begin{proof}
We must check three conditions:

\begin{enumerate}

\item $\leq_{\Sigma}$ is pre-order. This is almost obvious.

\item $[w] \leq_{\Sigma} [v] \Rightarrow [w] R_{\Sigma} [v]$. Suppose that $[w] \leq_{\Sigma} [v]$ and $w \Vdash \Delta \beta$ where $\Delta \beta \in \Sigma$. Thus $v \Vdash \Delta \beta$. But then $v \Vdash \beta$ which gives us our expected result.

\item $[w] \leq_{\Sigma} [v] \Rightarrow ([v] R_{\Sigma} [u] \Rightarrow [w] R_{\Sigma} [u])$. Suppose that $[w] \leq_{\Sigma} [v]$ and $[v] R_{\Sigma} [u]$. Now assume that $w \Vdash \Delta \beta, \Delta \beta \in \Sigma$. Thus $v \Vdash \Delta \beta$. Hence, $u \Vdash \beta$.

\end{enumerate}

\end{proof}

The next lemma is clear:

\begin{lem}
The following properties are true for every $w, v \in W$:

\begin{enumerate}

\item If $w \leq v$, then $[w] \leq_{\Sigma} [v]$.

\item If $w R v$, then $[w] R_{\Sigma} [v]$.

\end{enumerate}
\end{lem}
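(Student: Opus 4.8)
The plan is to prove both claims directly from the definitions of $\leq_{\Sigma}$ and $R_{\Sigma}$ given in Definition 5.8, exploiting the fact that these filtered relations are defined in terms of forcing of formulas from $\Sigma$ in representative worlds. The key observation is that $w \leq v$ (resp. $w R v$) in the original model already forces exactly the implication on forcing of $\Sigma$-formulas that $[w] \leq_{\Sigma} [v]$ (resp. $[w] R_{\Sigma} [v]$) demands, so the argument is essentially a transfer of monotonicity and of the semantic clause for $\Delta$.

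For the first part, suppose $w \leq v$. To show $[w] \leq_{\Sigma} [v]$ I must check that for every $\alpha \in \Sigma$, if $w \Vdash \alpha$ then $v \Vdash \alpha$. But this is precisely the content of Lemma 5.4 (monotonicity of forcing in $M^{\leq,R}$): since $w \leq v$ and $w \Vdash \alpha$, monotonicity gives $v \Vdash \alpha$. Since $\alpha$ was arbitrary in $\Sigma$, the defining condition of $[w] \leq_{\Sigma} [v]$ holds. Note that the choice of representatives is harmless here: if $w \sim w'$ and $v \sim v'$ then $w$ and $w'$ agree on all $\Sigma$-formulas, and likewise $v, v'$, so the implication "$w \Vdash \alpha \Rightarrow v \Vdash \alpha$" for all $\alpha \in \Sigma$ transfers verbatim to $w', v'$; this is just the standard well-definedness remark and I would state it in one line.

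For the second part, suppose $w R v$. To show $[w] R_{\Sigma} [v]$ I must check that for every $\Delta\beta \in \Sigma$, if $w \Vdash \Delta\beta$ then $v \Vdash \beta$. This is immediate from the forcing clause for $\Delta$ in Definition 5.3: $w \Vdash \Delta\beta$ means $u \Vdash \beta$ for all $u$ with $w R u$, and since in particular $w R v$, we get $v \Vdash \beta$. Again representative-independence follows because $w$ and $w'$ agree on $\Delta\beta \in \Sigma$, and $v, v'$ agree on $\beta$ whenever $\beta \in \Sigma$ (which it is, as a subformula of $\Delta\beta \in \Sigma = Sub(\gamma)$, since $\Sigma$ is subformula-closed).

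Honestly, there is no real obstacle here — the lemma is labelled "clear" in the excerpt for good reason. The only point demanding a moment's care is the well-definedness of the two filtered relations on equivalence classes, but that is built into the definitions (they only ever speak of forcing of $\Sigma$-formulas, which is $\sim$-invariant), and for the lemma itself one simply reads off the first part from Lemma 5.4 and the second from the $\Delta$-clause of Definition 5.3. I would write the whole proof in three or four sentences.
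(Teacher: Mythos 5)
Your proof is correct: part 1 is exactly monotonicity of forcing in the bi-relational model applied to each $\alpha \in \Sigma$, and part 2 is an immediate instantiation of the forcing clause for $\Delta$, with the well-definedness remark handled properly since $\Sigma = Sub(\gamma)$ is subformula-closed. The paper omits the proof entirely (stating the lemma is clear), and your argument is precisely the intended routine verification, so nothing further is needed.
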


Now we can transform our filtered structure into an \textbf{brIML1}-model - by introducing valuation.

\begin{df}

Let $M = \langle W, \leq, R, V \rangle$ be an \textbf{brIML1}-model. Suppose that $F_{\Sigma} = \langle W_{\Sigma}, \leq_{\Sigma}, R_{\Sigma} \rangle$ is a filtered \textbf{brIML1}-frame based on the structure of $M$. We define filtered \textbf{brIML1}-model $M_{\Sigma}$ as a quadruple $\langle W, \leq_{\Sigma}, R_{\Sigma}, V_{\Sigma} \rangle$, where $V_{\Sigma}$ is a function from $PV$ into $P(W_{\Sigma})$ such that:

$$V_{\Sigma}(q) = \begin{cases}
      \{[w]; w \in V(q)\} & q \in \Sigma \\
      \emptyset & q \notin \Sigma
   \end{cases}
$$

\end{df}

It is easy to check that $V_{\Sigma}$ is monotone with respect to $\leq_{\Sigma}$ (for propositional variables and then, by induction, for all formulas). Now we can prove the crucial theorem:

\begin{tw}
The following equivalence holds for each $w \in W$ and every $\alpha \in \Sigma$:

$w \Vdash \alpha$ $\Leftrightarrow$ $[w] \Vdash_{M_{\Sigma}} \alpha$.
\end{tw}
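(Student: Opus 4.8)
The plan is to prove the equivalence $w \Vdash \alpha \Leftrightarrow [w] \Vdash_{M_{\Sigma}} \alpha$ by induction on the complexity of $\alpha \in \Sigma$. Note that $\Sigma = Sub(\gamma)$ is closed under subformulas, so whenever $\alpha \in \Sigma$ all its immediate subformulas are also in $\Sigma$ and the induction hypothesis applies to them. First I would dispatch the base case: for $\alpha = q \in PV \cap \Sigma$, by definition $V_{\Sigma}(q) = \{[w]; w \in V(q)\}$, so $[w] \Vdash_{M_{\Sigma}} q$ iff $[w] \in V_{\Sigma}(q)$ iff $w \in V(q)$ iff $w \Vdash q$ (the middle equivalence uses that $\sim$ refines membership in $V(q)$, since $q \in \Sigma$). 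The case $\alpha = \bot$ is trivial since neither side forces it.

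Next I would treat the Boolean cases $\land, \lor$, which are entirely routine: forcing of a conjunction or disjunction at $w$ (resp.\ at $[w]$) reduces to forcing of the immediate subformulas, and the induction hypothesis closes the gap. The interesting propositional case is $\alpha = \beta \rightarrow \delta$. For the direction $w \Vdash \beta \rightarrow \delta \Rightarrow [w] \Vdash_{M_{\Sigma}} \beta \rightarrow \delta$, I would take $[v]$ with $[w] \leq_{\Sigma} [v]$ and $[v] \Vdash_{M_{\Sigma}} \beta$; by the induction hypothesis $v \Vdash \beta$, and since $[w] \leq_{\Sigma} [v]$ means exactly that $v$ forces every $\Sigma$-formula that $w$ forces, and $\beta \rightarrow \delta \in \Sigma$, we get $v \Vdash \beta \rightarrow \delta$, hence $v \Vdash \delta$, hence $[v] \Vdash_{M_{\Sigma}} \delta$ by induction. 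Conversely, if $w \nVdash \beta \rightarrow \delta$, there is $v$ with $w \leq v$, $v \Vdash \beta$, $v \nVdash \delta$; then by Lemma 5.6 $[w] \leq_{\Sigma} [v]$, and by the induction hypothesis $[v] \Vdash_{M_{\Sigma}} \beta$, $[v] \nVdash_{M_{\Sigma}} \delta$, so $[w] \nVdash_{M_{\Sigma}} \beta \rightarrow \delta$.

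The modal case $\alpha = \Delta \beta$ is the heart of the argument and the place where the filtration relation $R_{\Sigma}$ was designed precisely to make things work. For the forward direction, assume $w \Vdash \Delta \beta$ and take $[v]$ with $[w] R_{\Sigma} [v]$; by the definition of $R_{\Sigma}$, since $\Delta \beta \in \Sigma$ and $w \Vdash \Delta \beta$, we get $v \Vdash \beta$, and since $\beta \in \Sigma$ the induction hypothesis yields $[v] \Vdash_{M_{\Sigma}} \beta$; as $[v]$ was arbitrary, $[w] \Vdash_{M_{\Sigma}} \Delta \beta$. For the converse, suppose $w \nVdash \Delta \beta$, so there is $v \in W$ with $w R v$ and $v \nVdash \beta$; by Lemma 5.6 $[w] R_{\Sigma} [v]$, and by the induction hypothesis $[v] \nVdash_{M_{\Sigma}} \beta$, whence $[w] \nVdash_{M_{\Sigma}} \Delta \beta$.

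The main obstacle, and the subtle point to handle carefully, is the forward direction of the modal case: one must be sure that the filtered relation $R_{\Sigma}$ does \emph{not} accidentally relate $[w]$ to classes of worlds where $\beta$ fails. This is exactly guaranteed by the choice $[w] R_{\Sigma} [v] \Leftrightarrow (w \Vdash \Delta \eta \Rightarrow v \Vdash \eta$ for all $\Delta \eta \in \Sigma)$ — it is a minimal filtration on the $\Delta$-side, so $[w] R_{\Sigma} [v]$ forces $v \Vdash \beta$ whenever $w \Vdash \Delta \beta$. One should also verify that $R_{\Sigma}$ is well-defined on equivalence classes (i.e.\ independent of representatives), which follows immediately because both the antecedent condition on $w$ and the conclusion on $v$ are stated purely in terms of forcing of $\Sigma$-formulas, and $\sim$ identifies worlds agreeing on all of $\Sigma$; the same remark justifies that $\leq_{\Sigma}$ is well-defined. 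Combined with Theorem 5.4 (that $F_\Sigma$ is a genuine \textbf{brIML1}-frame) and the finiteness of $W_\Sigma$, this theorem gives the finite model property: since $w \nVdash \gamma$ for some $w$ and $\gamma \in \Sigma$, we obtain $[w] \nVdash_{M_\Sigma} \gamma$ in the finite model $M_\Sigma$.
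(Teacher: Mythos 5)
Your proof is correct and follows the same overall strategy as the paper: induction on the complexity of $\alpha \in \Sigma$, with the base case settled by the definition of $V_{\Sigma}$ and the two interesting cases ($\rightarrow$ and $\Delta$) settled by the interplay between the filtered relations and Lemma \ ``if $w \leq v$ then $[w] \leq_{\Sigma} [v]$, if $w R v$ then $[w] R_{\Sigma} [v]$''. The one place where you genuinely diverge from the paper's write-up is in the left-to-right directions of the $\rightarrow$ and $\Delta$ cases, and there your version is the more careful one: to show $[w] \Vdash_{M_{\Sigma}} \varphi \rightarrow \psi$ (resp. $[w] \Vdash_{M_{\Sigma}} \Delta \varphi$) one must handle an \emph{arbitrary} $\leq_{\Sigma}$-successor (resp. $R_{\Sigma}$-successor) of $[w]$, which need not arise from a $\leq$- or $R$-successor of $w$ in the original model; you close this correctly by observing that the defining clauses of $\leq_{\Sigma}$ and $R_{\Sigma}$ transfer the $\Sigma$-formula $\varphi \rightarrow \psi$ (resp. the conclusion $\varphi$ of $\Delta\varphi \in \Sigma$) from $w$ to any representative of the successor class, whereas the paper's text at this point only quantifies over $v$ with $w \leq v$ (resp. $w R v$) and thus glosses over exactly this subtlety. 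Conversely, the paper's right-to-left arguments coincide with your contrapositive ones via the lemma. Your additional remarks --- well-definedness of $\leq_{\Sigma}$ and $R_{\Sigma}$ on equivalence classes, and the closing observation that together with Theorem 5.4 and finiteness of $W_{\Sigma}$ this yields the finite model property --- are not spelled out in the paper but are sound and welcome.
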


\begin{proof}
The proof goes by the standard induction on the construction of $\alpha$. Let us check the most important cases:

\begin{enumerate}

\item $\alpha := \varphi \rightarrow \psi$. Suppose that $w \Vdash \alpha$. Thus for all $v \in W, w \leq v$, we have $v \nVdash \varphi$ or $v \Vdash \psi$. Hence, by induction hypothesis, $[v] \nVdash_{M_{\Sigma}} \varphi$ or $[v] \Vdash_{M_{\Sigma}} \psi$. Of course $[w] \leq [v]$. Now we can say that $[w] \Vdash_{M_{\Sigma}} \varphi \rightarrow \psi$.

    On the other side, assume that $[w] \Vdash_{M_{\Sigma}} \alpha$. Thus, for each $[v] \in W_{\Sigma}$ such that $[w] \leq_{\Sigma} [v]$, we have $[v] \nVdash_{M_{\Sigma}} \varphi$ or $[v] \Vdash_{M_{\Sigma}} \psi$. By induction hypothesis $v \nVdash \varphi$ or $v \Vdash \psi$. As we know, $[w] \leq_{\Sigma} [v]$ means that acceptation of an arbitrary formula from $\Sigma$ in $w$ implies its acceptation in $v$. In particular, this condition holds when $w \leq v$. For this reason, we can write that $w \Vdash \alpha$.

\item $\alpha := \Delta \varphi$. Assume that $w \Vdash \alpha$. Now for all $v \in W, w R v$, we have $v \Vdash \varphi$. Thus, by induction hypothesis, $[v] \Vdash_{M_{\Sigma}} \beta$. Clearly, $[w] R_{\Sigma} [v]$ - so $[w] \Vdash_{M_{\Sigma}} \alpha$.

    On the other side, suppose that $[w] \Vdash_{M_{\Sigma}} \alpha$. Thus, for each $[v] \in W_{\Sigma}$ such that $[w] R_{\Sigma} [v]$, we have $[v] \Vdash_{M_{\Sigma}} \varphi$. By induction hypothesis, $v \Vdash \varphi$. As we know, $[w] R_{\Sigma} [v]$ means that acceptation of any formula $\Delta \beta$ from $\Sigma$ in $w$ implies acceptation of $\beta$ in $v$. In particular, this condition holds when $w R v$. For this reason, we can write that $w \Vdash \alpha$.

\end{enumerate}

\end{proof}

Finally, we can sum up our investigations in the conclusion below:

\begin{tw}
Intuitonistic modal logic \textbf{IML1} has finite model property and thus is decidable. In other words, it means that mono-modal intuitionistic system \textbf{KT} has finite model property with respect to the class of bi-relational \textbf{brIML1}-models.
\end{tw}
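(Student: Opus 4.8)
The plan is that this theorem is essentially a corollary: it assembles the completeness result of Section~4, the translation Theorem~5.2, and the filtration results (Definition~5.4, Theorem~5.3 and Theorem~5.4). Nothing new needs to be invented; the work is in chaining the pieces in the right order.

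First I would fix a formula $\varphi$ with $\nvdash \varphi$. Taking $w$ to be the set of theorems of \textbf{IML1} --- itself a theory, and one not containing $\varphi$ --- the completeness argument following Theorem~4.1 yields a prime theory $v$ in the canonical model with $w \subseteq v$ and $\varphi \notin v$; by the fundamental theorem (Theorem~4.1) this gives $v \nVdash \varphi$, so $\varphi$ is refuted at a point of the canonical \textbf{nIML1}-model. I would then route this countermodel through Theorem~5.2: the canonical neighborhood model is pointwise equivalent to some \textbf{brIML1}-model $M^{\leq,R} = \langle W, \leq, R, V\rangle$, so there is $w_0 \in W$ with $w_0 \nVdash \varphi$. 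Now I would filter. Since $\varphi$ is not a tautology, Definition~5.4 applies with $\Sigma := Sub(\varphi)$, and Definitions~5.6 and~5.7 produce the filtered model $M_\Sigma = \langle W_\Sigma, \leq_\Sigma, R_\Sigma, V_\Sigma\rangle$; by Theorem~5.3 this is a genuine \textbf{brIML1}-frame, $V_\Sigma$ is $\leq_\Sigma$-monotone by the remark preceding Theorem~5.4, and $W_\Sigma$ is finite because $\Sigma$ is. Since $\varphi \in \Sigma$, Theorem~5.4 gives $[w_0] \nVdash_{M_\Sigma} \varphi$. Thus every non-theorem of \textbf{IML1} fails in some finite \textbf{brIML1}-model, which is exactly the finite model property; the reformulation in terms of ``mono-modal intuitionistic \textbf{KT}'' is just the observation made in Section~3 that \textbf{IML1} is \textbf{IPC} together with \textbf{K}, \textbf{T}, \textbf{MP} and \textbf{RN}. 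Decidability then follows by the standard argument: the theorems of \textbf{IML1} form a recursively enumerable set (recursive axiom schemes, recursive rules \textbf{MP} and \textbf{RN}), while the non-theorems are recursively enumerable as well, since one can effectively enumerate all finite \textbf{brIML1}-models equipped with all valuations of the finitely many variables occurring in a given $\varphi$ and decide by brute force whether such a model refutes $\varphi$; a set that is both semidecidable and co-semidecidable is decidable.

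There is no serious obstacle, so the only things demanding attention are bookkeeping: that the countermodel supplied by completeness is a neighborhood model and must pass through Theorem~5.2 before filtration (which is defined only for bi-relational models); that the side condition ``$\varphi$ not a tautology'' needed to invoke Definition~5.4 and Theorem~5.4 is precisely what a non-theorem plus completeness delivers; and, for the decidability half, that model-checking on a fixed finite \textbf{brIML1}-model is effective, which is immediate because $W_\Sigma$ and the relevant fragment of $PV$ are finite. One could alternatively run the filtration directly in the neighborhood setting, bypassing Theorem~5.2, but the bi-relational route is the one the preceding subsection has already set up in full detail, so I would use it.
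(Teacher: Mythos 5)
Your proposal is correct and follows exactly the route the paper intends: Theorem 5.5 is stated as a summary whose proof is the assembly of completeness via the canonical model (Theorem 4.1 and the extension lemma), transfer to a pointwise-equivalent bi-relational model (Theorem 5.2), and the filtration results (Theorems 5.3 and 5.4), with decidability following by the standard enumeration argument. The only implicit ingredient worth naming is soundness (validated in Section 3 and transferred by pointwise equivalence), which is needed for the ``refuted in a finite model $\Rightarrow$ non-theorem'' half of your decidability step, but this is routine.
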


Of course the same conclusion holds for \textbf{nNML1}-models.

\section{Additional connectives}

Here we introduce functors announced earlier. Surely we have $\bot$, so $\sim$ can be derived from $\rightsquigarrow$ - but for transparency we introduce it also directly.

$w \Vdash \varphi \rightsquigarrow \psi$ $\Leftrightarrow$ $\bigcup \mathcal{N}_{w} \subseteq \{v \in W; v \nVdash \varphi$ or $v \Vdash \psi\}$

$w \Vdash \sim \varphi$ $\Leftrightarrow$ $\bigcup \mathcal{N}_{w} \subseteq \{v \in W; v \nVdash \varphi\}$

Theorem 2.1. (about monotonicity of forcing) holds for $\sim \varphi$ and $\varphi \rightsquigarrow \psi$. Now one could ask if we actually need new functors.

\begin{lem}

We can eliminate $\sim$ and $\rightsquigarrow$ if we accept the following restriction:

$v \in \bigcup \mathcal{N}_{w}$ $\Rightarrow$ $\bigcap \mathcal{N}_{v} \subseteq \bigcup \mathcal{N}_{w}$ ($T$-condition)

More precisely, in the presence of $T$-condition we can say that: $w \Vdash \varphi \rightsquigarrow \psi$ $\Leftrightarrow$ $w \Vdash \Delta(\varphi \rightarrow \psi)$ (for every $w \in W$).

\end{lem}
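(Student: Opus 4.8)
The statement to establish is: under the $T$-condition $v \in \bigcup \mathcal{N}_w \Rightarrow \bigcap \mathcal{N}_v \subseteq \bigcup \mathcal{N}_w$, we have $w \Vdash \varphi \rightsquigarrow \psi$ if and only if $w \Vdash \Delta(\varphi \rightarrow \psi)$. I will simply unfold both sides of the biconditional according to the forcing clauses and compare them, using the $T$-condition exactly where the two descriptions would otherwise fail to match.

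First I would write out what each side means. By definition, $w \Vdash \varphi \rightsquigarrow \psi$ says that $\bigcup \mathcal{N}_w \subseteq \{v \in W; v \nVdash \varphi \text{ or } v \Vdash \psi\}$, i.e. every $v \in \bigcup \mathcal{N}_w$ satisfies $v \nVdash \varphi$ or $v \Vdash \psi$. On the other hand, $w \Vdash \Delta(\varphi \rightarrow \psi)$ says $\bigcup \mathcal{N}_w \subseteq \{v \in W; v \Vdash \varphi \rightarrow \psi\}$, i.e. every $v \in \bigcup \mathcal{N}_w$ satisfies $\bigcap \mathcal{N}_v \subseteq \{u \in W; u \nVdash \varphi \text{ or } u \Vdash \psi\}$.

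Then I prove the two directions. For $(\Leftarrow)$, suppose $w \Vdash \Delta(\varphi \rightarrow \psi)$ and take $v \in \bigcup \mathcal{N}_w$. Then $v \Vdash \varphi \rightarrow \psi$, and since $v \in \bigcap \mathcal{N}_v$ (reflexivity, axiom 2(a) of the frame), the inclusion $\bigcap \mathcal{N}_v \subseteq \{u; u \nVdash \varphi \text{ or } u \Vdash \psi\}$ gives $v \nVdash \varphi$ or $v \Vdash \psi$; hence $w \Vdash \varphi \rightsquigarrow \psi$. Note this direction does not even need the $T$-condition. For $(\Rightarrow)$, suppose $w \Vdash \varphi \rightsquigarrow \psi$, and take any $v \in \bigcup \mathcal{N}_w$; I must show $v \Vdash \varphi \rightarrow \psi$, i.e. for every $u \in \bigcap \mathcal{N}_v$ we have $u \nVdash \varphi$ or $u \Vdash \psi$. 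Here is where the $T$-condition enters: from $v \in \bigcup \mathcal{N}_w$ it yields $\bigcap \mathcal{N}_v \subseteq \bigcup \mathcal{N}_w$, so $u \in \bigcup \mathcal{N}_w$, and then the hypothesis $w \Vdash \varphi \rightsquigarrow \psi$ gives exactly $u \nVdash \varphi$ or $u \Vdash \psi$. This closes the argument.

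The proof is essentially a bookkeeping exercise, so there is no serious obstacle; the one point requiring care is recognizing that the $T$-condition is needed only for the $(\Rightarrow)$ direction — to promote an element of a minimal neighborhood $\bigcap \mathcal{N}_v$ into the maximal neighborhood $\bigcup \mathcal{N}_w$ where the $\rightsquigarrow$-hypothesis can be applied — whereas reflexivity of minimal neighborhoods suffices for $(\Leftarrow)$. I would also remark in passing that the analogous equivalence $w \Vdash {\sim}\varphi \Leftrightarrow w \Vdash \Delta \lnot \varphi$ follows by taking $\psi := \bot$, which justifies eliminating $\sim$ as well.
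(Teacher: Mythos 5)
Your proof is correct and follows essentially the same route as the paper: the paper's $(\Rightarrow)$ direction is phrased as a proof by contradiction but uses exactly your step of promoting $u \in \bigcap \mathcal{N}_v$ into $\bigcup \mathcal{N}_w$ via the $T$-condition, and its $(\Leftarrow)$ direction is the same reflexivity observation you spell out. Your closing remark that ${\sim}$ is handled by setting $\psi := \bot$ matches the paper's comment that ${\sim}$ is derivable from $\rightsquigarrow$ and $\bot$.
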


\begin{proof}
$\Rightarrow$

Suppose that there exists \textbf{nIML1}-model with $T$-condition such that for certain $w \in W$ we have: $w \Vdash \varphi \rightsquigarrow \psi$ but $w \nVdash \Delta(\varphi \rightarrow \psi)$. It means that for each $v \in \bigcup \mathcal{N}_{w}$, $v \nVdash \varphi$ or $v \Vdash \psi$. But at the same time there exists $u \in \bigcup \mathcal{N}_{w}$ such that $u \nVdash \varphi \rightarrow \psi$. So there is $z \in \bigcap \mathcal{N}_{u}$ such that $z \Vdash \varphi$ and $z \nVdash \psi$. But this is impossible because $\bigcap \mathcal{N}_{u} \subseteq \bigcup \mathcal{N}_{w}$.

$\Leftarrow$

This direction is obvious: it is clear that if we have $\varphi \rightarrow \psi$ in each point of $\bigcup \mathcal{N}_{w}$ then we have $\varphi \rightsquigarrow \psi$ at least in $w$.

\end{proof}

Now we can introduce the notion of \textbf{nIML2}-model (i.e. \textbf{nIML1} with $T$-condition). Note that we can say that \textbf{nIML2}-models are distinguished from \textbf{nIML1}-models by the formula $\varphi \rightsquigarrow \psi \rightarrow \Delta (\varphi \rightarrow \psi)$ which holds in the former structures and not in the latter. Alternatively, we can use $\sim \varphi \rightarrow \Delta \lnot \varphi$.

Note the following lemma:

\begin{lem}Canonical \textbf{nIML1}-model validates $T$-condition (without introducing any new axioms on the syntactical side)\end{lem}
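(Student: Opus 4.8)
The plan is to unfold the definitions and show directly that the canonical neighborhood function satisfies $T$-condition, namely that $v \in \bigcup \mathcal{N}_w$ implies $\bigcap \mathcal{N}_v \subseteq \bigcup \mathcal{N}_w$. Recall from the definition of the canonical model that $\bigcup \mathcal{N}_w = \{v \in W : \Delta\varphi \in w \Rightarrow \varphi \in v\}$ and $\bigcap \mathcal{N}_v = \{u \in W : v \subseteq u\}$. So I fix $w \in W$, take any $v \in \bigcup \mathcal{N}_w$ and any $u \in \bigcap \mathcal{N}_v$ (i.e. $v \subseteq u$), and must verify $u \in \bigcup \mathcal{N}_w$, that is: whenever $\Delta\varphi \in w$, we have $\varphi \in u$.

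First I would take an arbitrary formula $\varphi$ with $\Delta\varphi \in w$. Since $v \in \bigcup \mathcal{N}_w$, the defining condition of $\bigcup \mathcal{N}_w$ gives $\varphi \in v$. Then, since $v \subseteq u$ (because $u \in \bigcap \mathcal{N}_v$), we get $\varphi \in u$. As $\varphi$ was arbitrary, $u$ satisfies the defining condition for membership in $\bigcup \mathcal{N}_w$, hence $u \in \bigcup \mathcal{N}_w$. This establishes $\bigcap \mathcal{N}_v \subseteq \bigcup \mathcal{N}_w$, which is exactly $T$-condition.

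There is essentially no obstacle here: the argument is a two-line chase through the canonical definitions, using only that supertheories preserve membership and that $\bigcup \mathcal{N}_w$ is defined precisely by the implication $\Delta\varphi \in w \Rightarrow \varphi \in \cdot\,$. The only thing worth emphasising — and the reason the lemma is stated — is the contrast with the syntactic side: $T$-condition corresponds semantically to the formula $\varphi \rightsquigarrow \psi \rightarrow \Delta(\varphi \rightarrow \psi)$ (equivalently $\sim\varphi \rightarrow \Delta\lnot\varphi$), which is \emph{not} among the \textbf{IML1} axioms, yet the canonical model built solely from \textbf{IML1}-theories happens to validate it anyway. So after the short verification I would remark that this shows the canonical model construction is "too strong" in a harmless way, and that $T$-condition does not separate \textbf{IML1} from \textbf{IML2} at the level of canonical models — the separation occurs only when one looks at the full class of \textbf{nIML1}-frames, where one can build a frame violating $T$-condition (e.g. along the lines of the counter-examples used earlier for the rejected modal axioms).
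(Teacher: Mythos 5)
Your proof is correct and follows essentially the same argument as the paper: both unfold the canonical definitions of $\bigcup \mathcal{N}_{w}$ and $\bigcap \mathcal{N}_{v}$ and chase $\Delta \varphi \in w \Rightarrow \varphi \in v \Rightarrow \varphi \in u$ via $v \subseteq u$, the only cosmetic difference being that the paper phrases it as a proof by contradiction while you argue directly.
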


\begin{proof}
Assume that we have (prime) theories $v$ and $w$ such that $v \in \bigcup \mathcal{N}_{w}$. It means that if $\Delta \varphi$ belongs to $w$, then $\varphi$ is in $v$ (for every formula $\varphi$). Suppose now that $\bigcap \mathcal{N}_{v} \nsubseteq \bigcup \mathcal{N}_{w}$. Thus, we have certain $u$ such that $u \in \bigcap \mathcal{N}_{v}$ and $u \notin \bigcup \mathcal{N}_{w}$. Hence, $v \subseteq u$ and we can find at least one formula $\gamma$ such that $\Delta \gamma \in w$ but $\gamma \notin u$. On the other hand, $\gamma \in v$. But $v \subseteq u$, as we said - so we have clear contradiction.
\end{proof}

The last lemma is important. In fact, it says that \textbf{IML1} system is complete with respect to \textbf{nIML2}-frames. Moreover, similar reasoning (in terms of bi-relational structures) can be repeated in filtered model. Thus, we have finite model property w.r.t. \textbf{brIML2}-frames.

It should be added that Bozic and Dosen also distinguished $T$-condition (and proved completeness directly in bi-relational setting). In their terminology, \textbf{brIML2}-frames are \textit{strictly condensed} frames.

\section{Bisimulation and congenial notions}

The notions of bounded morphism and bisimulation are well-known and widely used by researchers working with relational (Kripke) or neighborhood models. In this chapter we show how those tools can be used in our environment (see also [4] for comparison). Note that in the fourth subsection our new connective $\rightsquigarrow$ becomes really necessary.

\subsection{Bounded morphisms}

\begin{df}
Let $M^1 = \langle W^1, \mathcal{N}^{1}, V^1 \rangle$ and $M^2 = \langle W^2, \mathcal{N}^{2}, V^2 \rangle$ be two \textbf{nIML1}-models. A function $f: W^1 \rightarrow W^2$ is a \textit{bounded morphism} from $M^1$ to $M^2$ if for any $w \in W$ these restrictions are satisfied:

\begin{enumerate}

\item for every $q \in PV$ we have $w \Vdash_{M^1} q$ $\Leftrightarrow$ $f(w) \Vdash_{M^2} q$. This means that $w$ and $f(w)$ satisfy the same propositional variables.

\item $f[\bigcap \mathcal{N}^{1}_{w}] = \bigcap \mathcal{N}^{2}_{f(w)}$

\item $f[\bigcup \mathcal{N}^{1}_{w}] = \bigcup \mathcal{N}^{2}_{f(w)}$

\end{enumerate}

\end{df}

Now we can prove the following theorem:

\begin{tw}
Let $M^1 = \langle W^1, \mathcal{N}^{1}, V^1 \rangle$ and $M^2 = \langle W^2, \mathcal{N}^{2}, V^2 \rangle$ be two \textbf{nIML1}-models. If $f: W^1 \rightarrow W^2$ is a bounded morphism from $M^1$ to $M^2$ then for each formula $\varphi$ and each $w \in W^1$ the following holds: $w \Vdash_{M^1} \gamma$ $\Leftrightarrow$ $f(w) \Vdash_{M^2} \gamma$.
\end{tw}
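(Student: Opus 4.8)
The plan is to proceed by induction on the complexity of $\gamma$, establishing the biconditional $w \Vdash_{M^1} \gamma \Leftrightarrow f(w) \Vdash_{M^2} \gamma$ simultaneously for all $w \in W^1$. The base case $\gamma := q \in PV$ is exactly clause 1 of the definition of bounded morphism, and the cases $\gamma := \bot$ and the Boolean connectives $\land, \lor$ are entirely routine (for $\land$ and $\lor$ the forcing conditions are pointwise, so the induction hypothesis transfers them directly). The two substantive cases are $\gamma := \varphi \rightarrow \psi$ and $\gamma := \Delta \varphi$, and these rely essentially on clauses 2 and 3 respectively.

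For the implication case, recall that $w \Vdash_{M^1} \varphi \rightarrow \psi$ means $\bigcap \mathcal{N}^{1}_{w} \subseteq \{v \in W^1; v \nVdash \varphi \text{ or } v \Vdash \psi\}$, and similarly on the other side with $f(w)$ and $\bigcap \mathcal{N}^{2}_{f(w)}$. I would argue as follows. Assume $w \Vdash_{M^1} \varphi \rightarrow \psi$ and take an arbitrary $v' \in \bigcap \mathcal{N}^{2}_{f(w)}$. By clause 2, $\bigcap \mathcal{N}^{2}_{f(w)} = f[\bigcap \mathcal{N}^{1}_{w}]$, so $v' = f(v)$ for some $v \in \bigcap \mathcal{N}^{1}_{w}$; then $v \nVdash_{M^1} \varphi$ or $v \Vdash_{M^1} \psi$, and by the induction hypothesis applied at $v$, $f(v) \nVdash_{M^2} \varphi$ or $f(v) \Vdash_{M^2} \psi$. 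This gives $f(w) \Vdash_{M^2} \varphi \rightarrow \psi$. The converse direction is symmetric: from $f(w) \Vdash_{M^2} \varphi \rightarrow \psi$, take $v \in \bigcap \mathcal{N}^{1}_{w}$, note $f(v) \in \bigcap \mathcal{N}^{2}_{f(w)}$, hence $f(v) \nVdash_{M^2} \varphi$ or $f(v) \Vdash_{M^2} \psi$, and pull this back through the induction hypothesis. The modal case $\gamma := \Delta \varphi$ is handled the same way but with $\bigcup \mathcal{N}$ in place of $\bigcap \mathcal{N}$ and clause 3 in place of clause 2: $w \Vdash_{M^1} \Delta \varphi$ iff every $v \in \bigcup \mathcal{N}^{1}_{w}$ forces $\varphi$, iff (using $f[\bigcup \mathcal{N}^{1}_{w}] = \bigcup \mathcal{N}^{2}_{f(w)}$ and the induction hypothesis) every element of $\bigcup \mathcal{N}^{2}_{f(w)}$ forces $\varphi$, iff $f(w) \Vdash_{M^2} \Delta \varphi$.

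The point to be careful about — and what I'd flag as the one genuinely delicate step — is that the surjectivity embedded in the \emph{equalities} $f[\bigcap \mathcal{N}^{1}_{w}] = \bigcap \mathcal{N}^{2}_{f(w)}$ and $f[\bigcup \mathcal{N}^{1}_{w}] = \bigcup \mathcal{N}^{2}_{f(w)}$ is used in both directions of each biconditional: the $\subseteq$ inclusion of the image is what lets us transfer forcing \emph{up} from $M^1$ to $M^2$, while the $\supseteq$ inclusion (every neighbour of $f(w)$ is the $f$-image of a neighbour of $w$) is what lets us transfer forcing \emph{down}. Since clauses 2 and 3 are stated as genuine set equalities, both inclusions are available, so no extra ``back'' or ``forth'' hypothesis is needed here; one just has to invoke the correct inclusion at each turn. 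Everything else is bookkeeping over the inductive structure of $\gamma$.
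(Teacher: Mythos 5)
Your proof is correct and takes essentially the same approach as the paper: induction on the complexity of $\gamma$, using the $\supseteq$ direction of the equality $f[\bigcap \mathcal{N}^{1}_{w}] = \bigcap \mathcal{N}^{2}_{f(w)}$ (resp.\ $f[\bigcup \mathcal{N}^{1}_{w}] = \bigcup \mathcal{N}^{2}_{f(w)}$) to transfer forcing from $M^1$ to $M^2$ and the $\subseteq$ direction for the converse. The only difference is that the paper, having introduced the connective $\rightsquigarrow$ in its Section 6, also includes the case $\gamma := \varphi \rightsquigarrow \psi$, which is treated exactly like your $\rightarrow$ case but with maximal instead of minimal neighborhoods, so your argument covers it verbatim.
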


\begin{proof}
The proof goes by induction on the complexity of formulas. The first case (propositional letters) is obvious. Also $\land$ and $\lor$ are quite simple. Implication cases ($\rightarrow$ and $\rightsquigarrow$) are only slightly more complicated. In general, we can repeat the reasoning of Moniri and Maleki.

\begin{enumerate}

\item $\gamma := \varphi \rightarrow \psi$

It is sufficient to show that $\bigcap \mathcal{N}^{1}_{w} \subseteq \{v \in W^1; v \nVdash_{M^1} \varphi$ or $v \Vdash_{M^1} \psi\}$  $\Leftrightarrow$ $\bigcap \mathcal{N}^{2}_{f(w)} \subseteq \{u \in W^2; u \nVdash_{M^2} \varphi$ or $u \Vdash_{M^2} \psi\}$.

$\Rightarrow$
Assume that $y \in \bigcap \mathcal{N}^{2}_{f(w)}$. Hence there exists $x \in \bigcap \mathcal{N}^{1}_{w}$ such that $f(x) = y$. Then $x \nVdash_{M^1} \varphi$ or $x \Vdash_{M^1} \psi$. By induction hypothesis, $y \nVdash_{M^2} \varphi$ or $y \Vdash_{M^2} \psi$. Clearly, this gives us our expected conclusion because $y$ is an arbitrary element of $\bigcap \mathcal{N}^{2}_{f(w)}$.

$\Leftarrow$
This direction is similar and it starts with an assumption that $x \in \bigcap \mathcal{N}^{1}_{w}$.

\item $\gamma := \varphi \rightsquigarrow \psi$

Here we can repeat reasoning for $\rightarrow$ but of course we must remember that now we are working with maximal (not minimal) neighborhoods.

\item $\gamma := \Delta \varphi$

Assume that $w \Vdash_{M^1} \Delta \varphi$. Hence $\bigcup \mathcal{N}^{1}_{w} \subseteq \{v \in W^1; v \Vdash_{M^1} \varphi\}$. We want to show that $\bigcup \mathcal{N}^{2}_{f(w)} \subseteq \{z \in W^2; z \Vdash_{M^2} \varphi\}$. Take $y \in \bigcup \mathcal{N}^{2}_{f(w)}$. Then there is $x \in \bigcup \mathcal{N}^{1}_{w}$ such that $f(x) = y$. But if $w \Vdash_{M^1} \Delta \varphi$ then $x \Vdash_{M^1} \varphi$. Thus, by induction hypothesis we have $f(x) = y \Vdash_{M^2} \varphi$. Now we can say that $f(w) \Vdash_{M^2} \Delta \varphi$.

Just as in the case of implication, the other direction is very similar. We suppose that $f(w) \Vdash_{M^2} \Delta \varphi$ and then we must show that $\bigcup \mathcal{N}^{1}_{w} \subseteq \{v \in W^1; v \Vdash_{M^1} \varphi\}$.

\end{enumerate}

\end{proof}

\subsection{Behavioral equivalence}

There is one interesting notion related to bounded morphisms (see [6]). We define it below:

\begin{df}
Suppose that we have two \textbf{nIML1}-models $M^1 = \langle W^1, \mathcal{N}^{1}, V^1 \rangle$ and $M^2 = \langle W^2, \mathcal{N}^{2}, V^2 \rangle$. We say that worlds $w_1 \in W^1$ and $w_2 \in W^2$ are behaviorally equivalent if there exists \textbf{nIML1}-model $B = \langle W_B, \mathcal{N}_{B}, V_B \rangle$ and bounded morphisms $f: M^1 \rightarrow B$ and $g: M^2 \rightarrow B$ such that $f(w_1) = g(w_2)$.
\end{df}

\begin{figure}[ht]
\centering
\includegraphics[height=5cm]{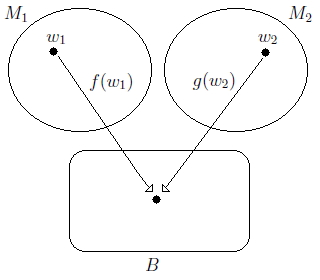}
\caption{Behavioral equivalence}
\label{fig:obrazek {behav}}
\end{figure}

This idea has been depicted on Fig. 2. It is not surprising that the following theorem holds:

\begin{tw}Assume that $M^1 = \langle W^1, \mathcal{N}^{1}, V^1 \rangle$ and $M^2 = \langle W^2, \mathcal{N}^{2}, V^2 \rangle$ are two \textbf{nIML1}-models. If $w_1 \in W^1$ and $w_2 \in W^2$ are behaviorally equivalent, then for each $\varphi$ we have: $w_1 \Vdash_{M^1} \varphi$ $\Leftrightarrow$ $w_2 \Vdash_{M^2} \varphi$.
\end{tw}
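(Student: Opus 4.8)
The plan is to reduce this statement directly to the bounded morphism theorem (Theorem 6.1) already proved in the previous subsection. Suppose $w_1 \in W^1$ and $w_2 \in W^2$ are behaviorally equivalent. By the very definition of behavioral equivalence, there is an \textbf{nIML1}-model $B = \langle W_B, \mathcal{N}_B, V_B \rangle$ together with bounded morphisms $f : M^1 \rightarrow B$ and $g : M^2 \rightarrow B$ satisfying $f(w_1) = g(w_2)$. Call this common world $b := f(w_1) = g(w_2) \in W_B$.

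Next, for an arbitrary formula $\varphi$, I would apply Theorem 6.1 twice. Since $f$ is a bounded morphism from $M^1$ to $B$, it yields $w_1 \Vdash_{M^1} \varphi \Leftrightarrow f(w_1) \Vdash_{B} \varphi$, i.e. $w_1 \Vdash_{M^1} \varphi \Leftrightarrow b \Vdash_{B} \varphi$. Likewise, since $g$ is a bounded morphism from $M^2$ to $B$, we obtain $w_2 \Vdash_{M^2} \varphi \Leftrightarrow g(w_2) \Vdash_{B} \varphi$, i.e. $w_2 \Vdash_{M^2} \varphi \Leftrightarrow b \Vdash_{B} \varphi$. Chaining these two equivalences through the common middle term $b \Vdash_{B} \varphi$ gives $w_1 \Vdash_{M^1} \varphi \Leftrightarrow w_2 \Vdash_{M^2} \varphi$, which is exactly the claim. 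Since $\varphi$ was arbitrary, this holds for all formulas.

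In short, the entire argument is a two-line diagram chase: the heavy lifting (the induction on formula complexity, including the delicate $\rightarrow$, $\rightsquigarrow$ and $\Delta$ cases involving minimal and maximal neighborhoods) has already been done once and for all in Theorem 6.1, so here there is genuinely nothing left to grind through. The only thing to be careful about is that both $f$ and $g$ land in the \emph{same} model $B$ and at the \emph{same} point $b$; that is guaranteed verbatim by Definition 6.2 (and illustrated in Figure 2), so no additional frame-condition checking is needed. I do not anticipate any real obstacle — the statement is essentially a corollary of the preceding theorem.
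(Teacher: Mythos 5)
Your proposal is correct and follows exactly the paper's own argument: apply the bounded morphism preservation theorem to $f$ and to $g$, then chain the two equivalences through the common point $f(w_1)=g(w_2)$ in $B$. Nothing further is needed.
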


\begin{proof}
Suppose that $w_1 \Vdash_{M^1} \varphi$. Consider model $B$ and $f: M^1 \rightarrow B$. From Theorem 7.1. we can say that $f(w_1) \Vdash_{B} \varphi$. But $f(w_1) = g(w_2)$. Thus $g(w_2) \Vdash_B \varphi$. Now we use the same reasoning to get our conclusion: that $w_2 \Vdash_{M_2} \varphi$.
\end{proof}

\subsection{Bisimulation}

\begin{df}

Let $ M^1 = \langle W^1, \mathcal{N}^{1}, V^1 \rangle $ and $M^2 = \langle W^2, \mathcal{N}^{2}, V^2 \rangle$ be two \textbf{nIML1}-models. A non-empty binary relation $R \subseteq W^1 \times W^2$ is called a \textit{bisimulation} between $M^1$ and $M^2$ if for all $w_1 \in W^1, w_2 \in W^2$ such that $w_1 R w_2$ we have:

\begin{enumerate}

\item For every $q \in PV$: $w_1 \Vdash_{M^1} q$ $\Leftrightarrow$ $w_2 \Vdash_{M^2} q$

\item For every $y \in \bigcap \mathcal{N}^{2}_{w_2}$ there is $x \in \bigcap \mathcal{N}^{1}_{w_1}$ such that $xRy$

\item For every $y \in \bigcup \mathcal{N}^{2}_{w_2}$ there is $x \in \bigcup \mathcal{N}^{1}_{w_1}$ such that $xRy$

\item For every $x \in \bigcap \mathcal{N}^{1}_{w_1}$ there is $y \in \bigcap \mathcal{N}^{2}_{w_2}$ such that $xRy$

\item For every $x \in \bigcup \mathcal{N}^{1}_{w_1}$ there is $y \in \bigcup \mathcal{N}^{2}_{w_2}$ such that $xRy$

\end{enumerate}

\end{df}

As we could expect, the following theorem holds:

\begin{tw}
Let $ M^1 = \langle W^1, \mathcal{N}^{1}, V^1 \rangle $ and $M^2 = \langle W^2, \mathcal{N}^{2}, V^2 \rangle$ be two \textbf{nIML1}-models with a bisimulation $R$ between them. Then $w_1 R w_2$ implies that $w_1$ and $w_2$ force the same formulas.
\end{tw}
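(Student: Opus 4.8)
The plan is to prove the stronger statement by induction on the complexity of $\varphi$, showing simultaneously that whenever $w_1 R w_2$ we have $w_1 \Vdash_{M^1} \varphi \Leftrightarrow w_2 \Vdash_{M^2} \varphi$. The base case $\varphi := q \in PV$ is exactly clause 1 of the definition of bisimulation, and the cases $\varphi := \varphi_1 \land \varphi_2$ and $\varphi := \varphi_1 \lor \varphi_2$ are immediate from the inductive hypothesis since forcing of conjunction and disjunction in a world depends only on forcing of the components in that same world (here we use that bisimulation is symmetric in spirit — clauses 2--3 give the ``back'' condition from $M^2$ to $M^1$ and clauses 4--5 give the ``forth'' condition, so the argument is genuinely bidirectional).

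The interesting inductive steps are $\rightarrow$ and $\Delta$ (and, if one wishes, $\rightsquigarrow$, handled like $\rightarrow$ but with $\bigcup\mathcal{N}$ in place of $\bigcap\mathcal{N}$). For $\varphi := \varphi_1 \rightarrow \varphi_2$: suppose $w_1 \Vdash_{M^1} \varphi_1 \rightarrow \varphi_2$, i.e. $\bigcap\mathcal{N}^1_{w_1} \subseteq \{v; v \nVdash \varphi_1 \text{ or } v \Vdash \varphi_2\}$, and take any $y \in \bigcap\mathcal{N}^2_{w_2}$. By clause 2 there is $x \in \bigcap\mathcal{N}^1_{w_1}$ with $x R y$; since $x$ forces $\varphi_1 \rightarrow \varphi_2$-witnessing, either $x \nVdash \varphi_1$ or $x \Vdash \varphi_2$, and the inductive hypothesis applied to the pair $x R y$ transfers this to $y \nVdash_{M^2}\varphi_1$ or $y \Vdash_{M^2}\varphi_2$. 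Hence $w_2 \Vdash_{M^2} \varphi_1 \rightarrow \varphi_2$. The converse direction is symmetric, using clause 4 instead of clause 2. The $\Delta$ case runs the same way: from $w_1 \Vdash_{M^1}\Delta\varphi_1$, i.e. $\bigcup\mathcal{N}^1_{w_1} \Vdash \varphi_1$, and any $y \in \bigcup\mathcal{N}^2_{w_2}$, clause 3 yields $x \in \bigcup\mathcal{N}^1_{w_1}$ with $xRy$, so $x \Vdash \varphi_1$, and the inductive hypothesis gives $y \Vdash_{M^2}\varphi_1$; thus $w_2 \Vdash_{M^2}\Delta\varphi_1$, and clause 5 handles the converse.

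The one genuine subtlety — and the step I would flag as the main obstacle — is the $\rightarrow$ (and $\rightsquigarrow$) case, because unlike in classical neighborhood semantics the truth of an implication at $w$ is not a local condition but quantifies over the whole minimal neighborhood $\bigcap\mathcal{N}_w$; this is precisely why clauses 2 and 4 (the two-directional ``zig-zag'' on minimal neighborhoods) are built into the definition of bisimulation, and one must be careful to invoke the right clause in the right direction and to apply the inductive hypothesis only to pairs actually related by $R$. Once one is disciplined about this, every case is a routine unfolding of definitions; no appeal to frame conditions ($\rightarrow$-condition, $\Delta$-condition, relativized superset axiom) is needed here, since those govern coherence of a single model, whereas bisimulation already packages exactly the cross-model matching we need.
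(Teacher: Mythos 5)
Your proposal is correct and follows essentially the same route as the paper: induction on formula complexity, using clause 2 (resp. 4) for the two directions of the $\rightarrow$ case, clause 3 (resp. 5) for $\Delta$, and handling $\rightsquigarrow$ like $\rightarrow$ with maximal neighborhoods. The clause-by-clause bookkeeping you flag as the main subtlety is exactly how the paper argues, so there is nothing to add.
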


\begin{proof}
The propositional case is obvious. Also $\land$ and $\lor$ are not difficult. What is important, is to show that our theorem holds for both implications and $\Delta$.

\begin{enumerate}

\item $\gamma := \varphi \rightarrow \psi$

Suppose that $w_1 \Vdash_{M^1} \gamma$. Thus $\bigcap \mathcal{N}^{1}_{w_1} \subseteq \{v \in W^1; v \nVdash \varphi$ or $v \Vdash \psi\}$. Take $y \in \bigcap \mathcal{N}^{2}_{w_2}$ and assume that $y \Vdash_{M^2} \varphi$. Because of bisimulation we can find $x \in \bigcap \mathcal{N}^{1}_{w_1}$ such that $xRy$. Now $x \Vdash_{M^1} \varphi$ (by induction hypothesis) but this implies that $x \Vdash_{M^1} \psi$. Then - again by induction hypothesis - $y \Vdash_{M^2} \psi$. So $w_2 \Vdash_{M^2} \gamma$.

The converse direction (starting from the assumption that $w_2 \Vdash_{M^2} \gamma$) is almost identical. Of course, we must use adequate conditions concerning relationships between minimal neighborhoods of both models.

\item $\gamma := \varphi \rightsquigarrow \psi$

This reasoning is very similar to the one presented above but with respect to maximal (not minimal) neighborhoods.

\item $\gamma: = \Delta \varphi$.

Assume that $w_1 \Vdash_{M^1} \gamma$, i.e. $\bigcup \mathcal{N}^{1}_{w_1} \subseteq \{v \in W^1; v \Vdash_{M^1} \varphi\}$. Now suppose that $y \in \bigcup \mathcal{N}^{2}_{w_2}$. Then there is $x \in \bigcup \mathcal{N}^{1}_{w_1}$ such that $xRy$. If $w_1 \Vdash_{M^1} \Delta \varphi$ then $x \Vdash_{M^1} \varphi$. Hence (by induction hypothesis) $y \Vdash_{M^2} \varphi$. This means that $w_2 \Vdash_{M^2} \Delta \varphi$.

Similar reasoning is adequate if we start from the assumption that $w_2 \Vdash_{M^2} \gamma$.

\end{enumerate}
\end{proof}

\subsection{The notion of n-bisimulation}

This section is probably more complex and not so obvious. At first, we introduce the notion of \textit{the degree of formulas}, following Moniri and Maleki but also expanding their basic definition.

\begin{df}
The degree of formulas is defined as follows:

\begin{enumerate}
\item $deg(q) = 0$ for any $q \in PV$
\item $deg(\bot) = 0$
\item $deg(\varphi \lor \psi) = deg(\varphi \land \psi) = \max(deg(\varphi), deg(\psi))$
\item $deg(\varphi \rightarrow \psi) = deg(\varphi \rightsquigarrow \psi) = 1 + \max(deg(\varphi), deg(\psi))$
\item $deg(\Delta \varphi) = 1 + deg(\varphi)$

\end{enumerate}

\end{df}

\begin{df}

Let $M^1 = \langle W^1, \mathcal{N}^{1}, V^1 \rangle $ and $M^2 = \langle W^2, \mathcal{N}^{2}, V^2 \rangle$ be two \textbf{nIML1}-models. Suppose that $w_1 \in M^1$ and $w_2 \in M^2$. We say that $w_1$ and $w_2$ are \textit{n-bisimilar} if there exists a sequence of binary relations $R_n \subseteq ... \subseteq R_0$ with the following properties (for $i + 1 \leq n$):

\begin{enumerate}

\item $w_1 R_n w_2$

\item If $w_1 R_0 w_2$ then $w_1$ and $w_2$ agree on all propositional letters.

\item If $w_1 R_{i+1} w_2$ then for every $y \in \bigcap \mathcal{N}^{2}_{w_2}$ there exists $x \in \bigcap \mathcal{N}^{1}_{w_1}$ such that $x R_i y$;

\item If $w_1 R_{i+1} w_2$ then for every $y \in \bigcup \mathcal{N}^{2}_{w_2}$ there exists $x \in \bigcup \mathcal{N}^{1}_{w_1}$ such that $x R_i y$;

\item If $w_1 R_{i+1} w_2$ then for every $x \in \bigcap \mathcal{N}^{1}_{w_1}$ there exists $y \in \bigcap \mathcal{N}^{2}_{w_2}$ such that $x R_i y$.

\item If $w_1 R_{i+1} w_2$ then for every $x \in \bigcup \mathcal{N}^{1}_{w_1}$ there exists $y \in \bigcup \mathcal{N}^{2}_{w_2}$ such that $x R_i y$;

\end{enumerate}

\end{df}

The following theorem holds (see [4]):

\begin{tw}
Let $\Pi$ be a finite set of propositional variables. Assume that $M^1$ and $M^2$ are \textbf{nIML1}-models for language of $\Pi$. Then for each $w_1 \in M^1$ and $w_2 \in M^2$ the following conditions are equivalent:

\begin{enumerate}
\item $w_1$ and $w_2$ are $n$-bisimilar

\item $w_1$ and $w_2$ agree on all propositional formulas of degree at most $n$.
\end{enumerate}

\end{tw}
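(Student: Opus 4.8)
The plan is to prove both implications by induction on $n$, proceeding in parallel with a careful bookkeeping of formula degrees. Throughout I would work with the standard modal/intuitionistic Ehrenfeucht--Fra\"iss\'e style argument, but adapted to the neighborhood setting, where each inductive step must separately handle the two layers of neighborhoods (minimal, for the $\rightarrow$/$\rightsquigarrow$ and propositional clauses, and maximal, for the $\Delta$ and $\rightsquigarrow$ clauses).

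\textbf{Direction $(1)\Rightarrow(2)$.} Assume $w_1$ and $w_2$ are $n$-bisimilar via $R_n\subseteq\cdots\subseteq R_0$. I would prove by induction on $n$ the stronger statement: whenever $w_1' R_k w_2'$ for some $k\leq n$, then $w_1'$ and $w_2'$ agree on all formulas of degree at most $k$. The base case $k=0$ is exactly clause (2) of the definition together with the fact that degree-$0$ formulas are built from propositional letters and $\bot$ using only $\land,\lor$ — so a trivial sub-induction on the Boolean structure suffices. For the inductive step, suppose $w_1' R_{k+1} w_2'$ and $\deg(\gamma)\leq k+1$. If $\gamma$ is a conjunction or disjunction of formulas of degree $\leq k+1$, reduce to the components. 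If $\gamma=\varphi\rightarrow\psi$ with $\deg(\varphi),\deg(\psi)\leq k$: suppose $w_1'\Vdash\gamma$ and take $y\in\bigcap\mathcal{N}^2_{w_2'}$ with $y\Vdash_{M^2}\varphi$; by clause (3) pick $x\in\bigcap\mathcal{N}^1_{w_1'}$ with $x R_k y$, and since $R_{k+1}\subseteq R_k$ we have $w_1' R_k\cdots$ — more precisely the witnesses live at level $R_k$, so by the induction hypothesis $x$ and $y$ agree on degree-$\leq k$ formulas; hence $x\Vdash_{M^1}\varphi$, so $x\Vdash_{M^1}\psi$, so $y\Vdash_{M^2}\psi$. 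The converse uses clause (5) symmetrically. The case $\gamma=\varphi\rightsquigarrow\psi$ is identical but uses clauses (4) and (6) with the maximal neighborhoods. The case $\gamma=\Delta\varphi$ with $\deg(\varphi)\leq k$ uses clauses (4) and (6): given $w_1'\Vdash\Delta\varphi$ and $y\in\bigcup\mathcal{N}^2_{w_2'}$, clause (4) yields $x\in\bigcup\mathcal{N}^1_{w_1'}$ with $x R_k y$, hence (induction hypothesis) $y\Vdash_{M^2}\varphi$; the converse uses clause (6).

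\textbf{Direction $(2)\Rightarrow(1)$.} This is where the finiteness of $\Pi$ is essential, and this will be the main obstacle. The idea is to define, for each $k\leq n$, the relation $R_k$ by $w_1' R_k w_2'$ iff $w_1'$ and $w_2'$ agree on all formulas of degree at most $k$. Since $\Pi$ is finite and the language over $\Pi$ has, up to logical equivalence, only finitely many formulas of each bounded degree (this needs a short argument: modulo provable equivalence there are finitely many formulas of degree $\leq k$ — one bounds the number of inequivalent formulas by induction on $k$), each $R_k$ is well-defined and $R_n\subseteq R_{n-1}\subseteq\cdots\subseteq R_0$ holds trivially since agreement on degree $\leq k+1$ implies agreement on degree $\leq k$. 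Clause (1) is the hypothesis, clause (2) is immediate. For clause (3): suppose $w_1' R_{k+1} w_2'$ and take $y\in\bigcap\mathcal{N}^2_{w_2'}$; I must produce $x\in\bigcap\mathcal{N}^1_{w_1'}$ with $x R_k y$, i.e.\ agreeing with $y$ on all degree-$\leq k$ formulas. Suppose no such $x$ exists. Then for each $x\in\bigcap\mathcal{N}^1_{w_1'}$ there is a degree-$\leq k$ formula $\delta_x$ distinguishing $x$ from $y$; by monotonicity of forcing in the minimal neighborhood (Theorem 2.1, applied inside $M^1$) and the closure of $\bigcap\mathcal{N}^1_{w_1'}$ under the $\rightarrow$-condition, one arranges (using the finitely-many-formulas fact to take a finite conjunction/disjunction) a single degree-$\leq k$ formula, say $\chi:=\bigwedge_x\delta_x$ suitably chosen so that $w_1'\Vdash\chi\rightarrow\varphi_0$ while this fails at $w_2'$ for an appropriate $\varphi_0$ describing $y$ — the net effect being a degree-$\leq k+1$ formula separating $w_1'$ from $w_2'$, contradicting $w_1' R_{k+1} w_2'$. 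The cases (4),(5),(6) are handled the same way: (5) is the minimal-neighborhood mirror of (3); (4) and (6) concern maximal neighborhoods and there the separating formula is built with $\Delta$ (or $\rightsquigarrow$), again of degree $\leq k+1$, using the $\Delta$-clause of forcing. The delicate point, which I would spell out carefully, is precisely the construction of these characteristic formulas of bounded degree and the verification that combining them stays within degree $k+1$; everything else is routine.

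\textbf{Expected main obstacle.} The hard part is the $(2)\Rightarrow(1)$ direction, specifically establishing that over a finite $\Pi$ there are only finitely many formulas of each bounded degree up to equivalence, and then packaging the distinguishing formulas for all elements of a (possibly infinite) neighborhood into one formula of degree $\leq k+1$. Once that finiteness and the characteristic-formula machinery are in place, the matching of the six bisimulation clauses to the six syntactic clauses (propositional, $\rightarrow$, $\rightsquigarrow$, $\Delta$) is mechanical.
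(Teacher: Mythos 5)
Your plan follows the same route as the paper (which adapts Moniri and Maleki): direction $(1)\Rightarrow(2)$ by induction, using clauses 3/5 for $\rightarrow$, clauses 4/6 for $\rightsquigarrow$ and $\Delta$, is exactly the paper's argument and is correct as you state it; and for $(2)\Rightarrow(1)$ you define $R_k$ as agreement on all formulas of degree at most $k$, precisely as the paper does, and you correctly identify the two essential ingredients: finiteness (up to equivalence) of the language of bounded degree over a finite $\Pi$, and the fact that the maximal-neighborhood clauses require separating formulas built with $\Delta$, $\rightsquigarrow$ (or $\sim$).

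There is, however, a genuine gap at the step that carries the whole of $(2)\Rightarrow(1)$: the construction of the separating formula. As literally written, taking $\chi:=\bigwedge_x\delta_x$ and a single implication $\chi\rightarrow\varphi_0$ ``with $\varphi_0$ describing $y$'' does not work. The distinguishing formulas come in two kinds --- those forced by $y$ and refuted by $x$, and those forced by $x$ and refuted by $y$ --- and in the intuitionistic language you cannot normalize them to one kind: $x\nVdash\delta$ is not $x\Vdash\lnot\delta$, prefixing $\lnot$ raises the degree by one, and there is no degree-$k$ characteristic formula of $y$ (only the conjunction of what $y$ forces is available). The paper's device, which is the actual content of this step, is the split $\Gamma_0=\{\delta_x:\ y\Vdash\delta_x,\ x\nVdash\delta_x\}$, $\Gamma_1=\{\delta_x:\ x\Vdash\delta_x,\ y\nVdash\delta_x\}$, followed by a three-case separating formula of degree at most $i+1$: for the maximal-neighborhood clauses 4 and 6 one uses $\sim\bigwedge\Gamma_0$ when $\Gamma_1=\emptyset$, $\bigwedge\Gamma_0\rightsquigarrow\bigvee\Gamma_1$ when both sets are nonempty, and $\Delta\bigvee\Gamma_1$ when $\Gamma_0=\emptyset$; each is forced at $u_1$ (because every $x\in\bigcup\mathcal{N}^1_{u_1}$ either refutes $\bigwedge\Gamma_0$ or forces $\bigvee\Gamma_1$) and refuted at $u_2$ (because of $y\in\bigcup\mathcal{N}^2_{u_2}$), contradicting $u_1R_{i+1}u_2$. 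The minimal-neighborhood clauses 3 and 5 are handled by the same split with $\lnot$ and $\rightarrow$ in place of $\sim$ and $\rightsquigarrow$ (the case $\Gamma_0=\emptyset$ can be discharged directly via monotonicity of forcing). Since you flagged this as the point you would spell out, the approach is not wrong, but the proposal as it stands omits exactly the idea that makes the step go through.
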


\begin{proof}
$\Rightarrow$

This part of the proof goes by induction on $n$. The case $n = 0$ is obvious. Assume that $w_1$ and $w_2$ are $n$-bisimilar. Consider the set $\Gamma_{w_1}^{n+1}$ of all formulas of degree at most $n+1$ that are satisfied by $w_1$. It is easy to see that this collection is equivalent to a single formula $\Theta_1$ which can be written as $\bigwedge \Gamma_{w_1}^{n+1}$. Note that our set of propositional letters is finite so for any $n$ we have (up to logical equivalence) only finitely many formulas of degree at most $n$.

Analogously, for $w_2$ we can consider $\Theta_2$. Our aim is to show that $\Theta_2$ is satisfied by $w_1$ and $\Theta_1$ is true in $w_2$.

Assume now that $deg(\Theta_1) = n+1$. Then $\Theta_1$ is a Boolean combination of propositional letters and formulas of the form:

- $\gamma := \varphi \rightarrow \psi$ such that $max\{deg(\psi_1), deg(\psi_2)\} = n$,

- $\gamma := \varphi \rightsquigarrow \psi$ such that $max\{deg(\psi_1), deg(\psi_2)\} = n$,

- $\gamma := \Delta \varphi$ where $deg(\varphi) \leq n$.

Clearly, it is sufficient to show that if $\gamma$ has one of the forms mentioned above then $w_1 \Vdash_{M_1} \gamma$ implies that $w_2 \Vdash_{M_2} \gamma$.

\begin{enumerate}

\item Suppose that $w_1 \Vdash_{M^1} \varphi \rightarrow \psi$. We want to show that $\bigcap \mathcal{N}^{2}_{w_2} \subseteq \{v \in W^2; v \Vdash_{M^2} \varphi \Rightarrow v \Vdash_{M^2} \psi\}$. Take $y \in \bigcap \mathcal{N}^{2}_{w_2}$ and suppose that $y \Vdash_{M^2} \varphi$. Because of $n$-bisimulation (clause 3 of Def. 7.5) we can find $x \in \bigcap \mathcal{N}^{1}_{w_1}$ such that $x R_n y$. By assumption $deg(\varphi) \leq n$. Hence (by induction hypothesis) $x \Vdash_{M^1} \varphi$. Then $x \Vdash_{M^1} \psi$ which gives us (again by induction hypothesis) that $y \Vdash_{M^2} \psi$. This is our expected result.

    The converse direction is very similar although we must use clause 5 of Def. 7.5.

\item Similar reasoning as above can be repeated for $\gamma := \varphi \rightsquigarrow \psi$. Of course now we need to use adequate clauses for maximal neighborhoods. So suppose that $w_1 \Vdash_{M^1} \varphi \rightsquigarrow \psi$. We want to obtain the following result: that $\bigcup \mathcal{N}^{2}_{w_2} \subseteq \{v \in W^2; v \Vdash_{M_2} \varphi \Rightarrow v \Vdash_{M_2} \psi\}$. Take $y \in \bigcup \mathcal{N}^{2}_{w_2}$ and suppose that $y \Vdash_{M^2} \varphi$. Now we use clause 4 of Def. 7.5. to find $x \in \bigcup \mathcal{N}^{1}_{w_1}$ such that $x R_n y$. By assumption $deg(\varphi) \leq n$. Hence $x \Vdash_{M^1} \varphi$ and then $x \Vdash_{M^1} \psi$. This gives us that $y \Vdash_{M^2} \psi$.

    The other direction is almost identical. However, me must use clause 6 of Def. 7.5.

\item Suppose that $w_1 \Vdash_{M^1} \Delta \varphi$. It means that $\bigcup \mathcal{N}^{1}_{w_1} \subseteq \{v \in W^{1}; v \Vdash \varphi\}$. Let $y \in \bigcup \mathcal{N}^{2}_{w_2}$. Then (by the definition of $n$-bisimulation, clause 4) there exists $x \in \bigcup \mathcal{N}^{1}_{w_1}$ such that $x R_n y$. If $w_1 \Vdash \Delta \varphi$ then $x \Vdash \varphi$. Then (by the induction hypothesis) $y \Vdash \varphi$. Thus $w_2 \Vdash \Delta \varphi$.

    One can easily see that the converse direction, starting from the assumption that $w_2 \Vdash_{M^2} \Delta \varphi$ - is completely analogous.

\end{enumerate}

$\Leftarrow$

Suppose that $w_1$ and $w_2$ agree on all propositional formulas of degree at most $n$. In general, we repeat the main idea of Moniri and Maleki - but we must adapt it to our additional clauses, connected with maximal neighborhoods.

At first, let us define the following sequence of relations:

$R_n = \{\langle x, y \rangle; x \in W^1, y \in W^2, deg(\varphi) \leq n, x \Vdash_{M^1} \varphi \Leftrightarrow y \Vdash_{M^2} \varphi\}$;

$R_{n-1} = \{\langle x, y \rangle; x \in W_1, y \in W^2, deg(\varphi) \leq n-1, x \Vdash_{M^1} \varphi \Leftrightarrow y \Vdash_{M^2} \varphi\}$;

\vdots

$R_{0} = \{\langle x, y \rangle; x \in W^1, y \in W^2, deg(\varphi) = 0, x \Vdash_{M^1} \varphi \Leftrightarrow y \Vdash_{M^2} \varphi\}$;

We see that $R_{n} \subseteq R_{n-1} \subseteq ... \subseteq R_{0}$, as expected. Now we must check specific conditions of $n$-bisimulation. As for clauses 1 and 2, they are obvious. Clauses 3 and 5 have been presented by Moniri and Maleki. Below we show how to modify their solution so that it would fit to clauses 4 and 6.

Suppose now that $u_{1} R_{i+1} u_{2}$. It means, in particular, that $u_1$ and $u_2$ agree on all propositional formulas of degree at most $i+1$. We must show that for every $y \in \bigcup \mathcal{N}^{2}_{u_2}$ there is $x \in \bigcup \mathcal{N}^{1}_{u_1}$ such that $x R_i y$. Assume the contrary: there exists $y \in \bigcup \mathcal{N}^{2}_{u_2}$ such that for every $x \in \bigcup \mathcal{N}^{1}_{u_1}$ there is formula $\varphi_{x}$, $deg(\varphi_{x}) \leq i$ for which:

$y \Vdash_{M^2} \varphi_{x}$ and $x \nVdash_{M^1} \varphi_{x}$

or

$y \nVdash_{M^2} \varphi_{x}$ and $x \Vdash_{M^1} \varphi_{x}$

Let $\Gamma$ be the set of all such $\varphi_{x}$. This set is (up to equivalence) finite. Now let us consider two subsets of $\Gamma$:

$\Gamma_{0} = \{\varphi_{x} \in \Gamma$ such that $y \Vdash_{M^2} \varphi_{x}$ and $x \nVdash_{M^1} \varphi_{x}\}$

$\Gamma_{1} = \{\varphi_{x} \in \Gamma$ such that $y \nVdash_{M^2} \varphi_{x}$ and $x \Vdash_{M^1} \varphi_{x}\}$

Now we define the following formula (of degree not bigger than $i+1$):

$\Theta  = \begin{cases}
      \bigwedge \Gamma_0, & \Gamma_1 = \emptyset \\
      \bigwedge \Gamma_0 \rightsquigarrow \bigvee \Gamma_1, & \Gamma_0 \neq \emptyset, \Gamma_1 \neq \emptyset \\
      \bigvee \Gamma_1, & \Gamma_0 = \emptyset
   \end{cases}
$

Suppose that $\Gamma_1$ is empty. Then we can say that $y \Vdash_{M^2} \Theta$. Thus $u_2 \nVdash_{M^2} \sim \Theta$. For every $x \in \bigcup \mathcal{N}^{1}_{u_1}$ we have $x \nVdash_{M_1} \Theta$. Hence, $\bigcup \mathcal{N}^{1}_{u_1} \subseteq \{z \in W^1; z \nVdash_{M^1} \Theta\}$. So we can say that $u_1 \Vdash_{M^1} \sim \Theta$. This is contradiction because $deg(\sim \Theta) \leq i+1$.

Now assume that $\Gamma_{0} \neq \emptyset$ and $\Gamma_{1} \neq \emptyset$. Consider

$K = \{v \in W^{1}; v \nVdash_{M^1} \bigwedge \Gamma_{0}$ or $v \Vdash_{M^1} \bigvee \Gamma_{1}\} \cup \{v \in W^{2}; v \nVdash_{M^2} \bigwedge \Gamma_{0}$ or $v \Vdash_{M^2} \bigvee \Gamma_{1}\}$.

For every $x \in \bigcup \mathcal{N}^{1}_{u_1}, x \nVdash_{M_1} \bigwedge \Gamma_{0}$ - so $\bigcup \mathcal{N}^{1}_{u_1} \subseteq K$. Hence $u_1 \Vdash_{M_1} \Theta$. But at the same time $y \Vdash_{M^2} \bigwedge \Gamma_{0}$ and $y \nVdash_{M^2} \bigvee \Gamma_{1}$. Thus $\bigcup \mathcal{N}^{2}_{u_2} \nsubseteq K$, i.e. $u_2 \nVdash_{M^2} \Theta $. This is plain contradiction because $deg(\Theta) \leq i+1$.

Now consider the third possibility: that $\Gamma_{0} = \emptyset$. Then $y \nVdash_{M^2} \Theta$. Therefore $u_2 \nVdash_{M^2} \Delta \Theta$. But for every $x \in \bigcup \mathcal{N}^{1}_{w_1}$, $x \Vdash_{M^1} \Theta$. Hence $u_1 \Vdash_{M^1} \Delta \Theta$. Contradiction - because $deg(\Delta \Theta) \leq i+1$.

Those considerations refer to clause 4 but solution for clause 6 is of course very similar; the only thing is to remember about proper notation.

\end{proof}

\section{Various frame conditions}

Fischer Servi introduced (see [9]) two important conditions which connect preorder $\leq$ with modal relation $R$ in bi-relational frame for intutionistic modal logic:

\begin{enumerate}

\item \textbf{F1}. If $w \leq u$ and $w R v$ then there exists $z \in W$ such that $u R z$ and $v \leq z$

\item \textbf{F2}. If $w R v$ and $v \leq u$ then there exists $z \in W$ such that $w \leq z$ and $z R u$

\end{enumerate}

Our \textbf{nIML1}-frames do not satisfy those restrictions. Consider the first case. Take the following frame: $W = \{w, v, u\}$ where $\bigcap \mathcal{N}_{w} = \{w, u\}, \bigcup \mathcal{N}_{w} = \{w, u, v\}$ and for $u, v$ their maximal (thus also minimal) neighborhoods are just their singletons. We see that $w \leq u$ and $w R v$. But for each $z \in W$ we have $\lnot (w \leq z)$ or $\lnot (u R z)$. Let us check it. Take $z = w$. Of course $w \notin \bigcap \mathcal{N}_{v}$, thus $\lnot (v \leq w)$. The same for $z = u$. If $z = v$, then $v \notin \bigcup \mathcal{N}_{u}$, i.e. $\lnot u R v$. So \textbf{F1} does not hold in our semantics.

\begin{figure}

\centering
\begin{minipage}{.4\textwidth}
  \centering

\includegraphics[width=.7\linewidth]{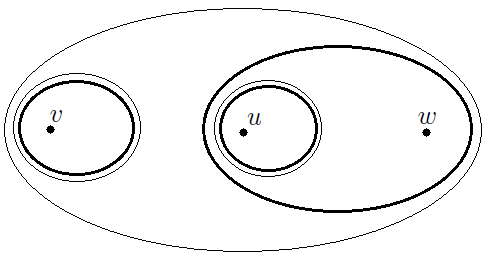}
\caption{Counter-frame for \textbf{F1}}
\label{fig:obrazek {f1axiom}}
\end{minipage}%
\begin{minipage}{.4\textwidth}
  \centering
\includegraphics[width=.7\linewidth]{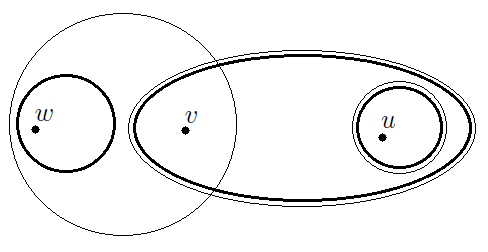}
\caption{Counter-frame for \textbf{F2}}
\label{fig:obrazek {f2axiom}}

\end{minipage}
\end{figure}

Now let us think about such frame: $W = \{w, u, v\}, \bigcap \mathcal{N}_{w} = \{w\}, \bigcup \mathcal{N}_{w} = \{w, v\}, \bigcap \mathcal{N}_{v} = \bigcup \mathcal{N}_{v} = \{v, u\}$ and for $u$ its maximal (and minimal) neighborhood is just $\{u\}$. We can say that $w R v$ because $v \in \bigcup \mathcal{N}_{w}$. Moreover, $v \leq u$ because $u \in \bigcap \mathcal{N}_{v}$. But for each $z \in W$ we have $\lnot (w \leq z)$ or $\lnot (z R u)$. Let us check it. Take $z = w$, we see that $u \notin \bigcup \mathcal{N}_{w}$, thus $\lnot (w R u)$. If $z = v$, then $v \notin \bigcap \mathcal{N}_{w}$. The same holds for $z = u$. Thus \textbf{F2} fails.

We can say that $T$-condition implies \textbf{F2}. In fact, we can always use $w$ as our expected $z$. Moreover, in our environment both conditions are equivalent. Suppose that we have \textbf{nIML1}-model with \textbf{F2} but $T$ does not hold. So we have $v, w \in W$ such that $v \in \bigcup \mathcal{N}_{w}$ but $\bigcap \mathcal{N}_{v} \nsubseteq \bigcup \mathcal{N}_{w}$. Thus there is $u \in \bigcap \mathcal{N}_{v}$ which does not belong to $\bigcup \mathcal{N}_{w}$. But then (by \textbf{F2}) there exists $z \in \bigcap \mathcal{N}_{w}$ such that $u \in \bigcup \mathcal{N}_{z}$. By $\Delta$-condition $\bigcup \mathcal{N}_{z} \subseteq \bigcup \mathcal{N}_{w}$. Now $u \in \bigcup \mathcal{N}_{w}$. This is contradiction. As we can see, $\Delta$-condition was used.

What is the meaning of \textbf{F1}? In Fischer Servi systems it assures us that monotonicity of forcing holds. In fact, it allows us to say that forcing of possibility is satisfied. Monotonicity of necessity is contained in the very definition of forcing. In our terms it would be:

$w \Vdash \Box \varphi$ $\Leftrightarrow$ $\bigcap \mathcal{N}_{w} \subseteq \{x \in W; \bigcup \mathcal{N}_{x} \Vdash \varphi\}$.

Our solution is different. We use much more simple definition of necessity but we must pay for it, assuming $\Delta$-condition in our frames. Note that in the presence of $\Delta$-condition $\Box$ and $\Delta$ become equivalent. As for possibility, we will discuss it in the next chapter. However, we can already say that \textbf{F1} will be adopted (at least in basic case).

Plotkin and Stirling in [7] discuss also two additional clauses which are less popular. We present them already in our neighborhood language.
\begin{enumerate}

\item \textbf{PS1}. If $v \in \bigcap \mathcal{N}_{w}$ and $u \in \bigcup \mathcal{N}_{v}$, then there is $z \in W$ such that $z \in \bigcup \mathcal{N}_{w}$ and $u \in \bigcap \mathcal{N}_{z}$.

This clause is always satisfied in \textbf{nIML1}-models. In fact, we can always take $u$ as $z$. Surely, $u \in \bigcup \mathcal{N}_{w}$ (by $\Delta$-condition) and $u \in \bigcap \mathcal{N}_{u}$.

\item \textbf{PS2}. If $v \in \bigcap \mathcal{N}_{w}$ and $v \in \bigcup \mathcal{N}_{u}$ then there is $z \in W$ such that $w \in \bigcup \mathcal{N}_{z}$ and $u \in \bigcap \mathcal{N}_{z}$.

This clause is not true. Check the following counter-frame: $W = \{w, v, u\}, \bigcap \mathcal{N}_{w} = \bigcup \mathcal{N}_{w} = \{w, v\}, \bigcap \mathcal{N}_{v} = \bigcup \mathcal{N}_{v} = \{v\}, \bigcap \mathcal{N}_{u} = \{u\}$ and $\bigcup \mathcal{N}_{u} = \{u, v\}$. Now: $w \in \bigcap \mathcal{N}_{w}$ and $v \in \bigcup \mathcal{N}_{u}$ but there is not any world which could be $z$. It is because $u \notin \bigcap \mathcal{N}_{w}, u \notin \bigcap \mathcal{N}_{v}$ and $w \notin \bigcup \mathcal{N}_{v}$.

\end{enumerate}

\section{Possibility operators}

\subsection{Various possibilities of possibility}

\textbf{IML1}-logic has been defined as mono-modal, i.e. in a simple language containing only one modal operator$\Delta$. For such system we proved (among other properties) semantical completeness. Now the question is: how we can define possibility operator in neighborhood framework, preserving intuitionistic aspect of our logic? Basically, we can use the following definition:

$w \Vdash \nabla \varphi$ $\Leftrightarrow$ there exists $v \in \bigcup \mathcal{N}_{w}$ such that $v \Vdash \varphi$

The problem is that such operator violates monotonicity of forcing (which is essential for intuitionism). One can easily find an example of the following situation: that $w \Vdash \nabla \varphi$, $v \in \bigcap \mathcal{N}_{w}$ and $v \nVdash \nabla \varphi$. We must remember that $\Delta$-condition guarantees monotonicity only for necessity operator. The simplest way to obtain expected property for $\nabla$ is to use \textbf{F1}-condition, described in the previous section. For convenience, we can introduce the notion of \textbf{nIML1-F1}-frame (model).

\begin{lem}
In \textbf{nIL1-F1}-model $w \Vdash \nabla \varphi$ implies that $u \Vdash \nabla \varphi$ for every $u \in \bigcap \mathcal{N}_{w}$.
\end{lem}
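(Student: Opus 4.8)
The plan is to unwind the definition of $\nabla$ and exploit \textbf{F1} to transport the witnessing world along the preorder. Suppose $w \Vdash \nabla\varphi$ and fix an arbitrary $u \in \bigcap \mathcal{N}_w$; we must show $u \Vdash \nabla\varphi$, i.e. that there is some $v' \in \bigcup \mathcal{N}_u$ with $v' \Vdash \varphi$. By the forcing clause for $\nabla$ there is $v \in \bigcup \mathcal{N}_w$ such that $v \Vdash \varphi$. Reading $u \in \bigcap \mathcal{N}_w$ as $w \leq u$ and $v \in \bigcup \mathcal{N}_w$ as $w R v$ (the bi-relational reading introduced in Section 5), we are exactly in the hypothesis of \textbf{F1}: $w \leq u$ and $w R v$. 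Hence \textbf{F1} supplies a world $z$ with $u R z$ and $v \leq z$, i.e. $z \in \bigcup \mathcal{N}_u$ and $z \in \bigcap \mathcal{N}_v$.

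It then remains to check that $z \Vdash \varphi$. This follows from monotonicity of forcing in minimal neighborhoods (Theorem 2.1): since $v \Vdash \varphi$, we have $\bigcap \mathcal{N}_v \subseteq V(\varphi)$, and $z \in \bigcap \mathcal{N}_v$, so $z \Vdash \varphi$. Now $z$ is the required witness: $z \in \bigcup \mathcal{N}_u$ and $z \Vdash \varphi$, so $u \Vdash \nabla\varphi$. Since $u$ was an arbitrary element of $\bigcap \mathcal{N}_w$, the claim is established.

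I do not expect any serious obstacle here; the only point that requires a little care is making sure that \textbf{F1} is being applied with the correct identification of the neighborhood inclusions with the relations $\leq$ and $R$, and that the witness $z$ produced really does live in $\bigcup \mathcal{N}_u$ (not merely in some larger neighborhood). Once that bookkeeping is done, the appeal to Theorem 2.1 to push forcing of $\varphi$ from $v$ down to $z$ is routine, and the proof closes immediately.
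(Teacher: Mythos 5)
Your proof is correct and follows essentially the same route as the paper: translate \textbf{F1} into neighborhood terms, apply it to the witness $v \in \bigcup \mathcal{N}_{w}$ and the point $u \in \bigcap \mathcal{N}_{w}$ to obtain $z \in \bigcup \mathcal{N}_{u} \cap \bigcap \mathcal{N}_{v}$, and transfer forcing of $\varphi$ from $v$ to $z$ by monotonicity. The only (harmless) difference is that you cite Theorem 2.1 explicitly for this last step, where the paper leaves it implicit.
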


\begin{proof}
At first, let us rewrite \textbf{F1} in terms of neighborhood structures: if $u \in \bigcap \mathcal{N}_{w}$ and $v \in \bigcup \mathcal{N}_{w}$, then there exists $z \in W$ such that $z \in \bigcup \mathcal{N}_{u}$ and $z \in \bigcap \mathcal{N}_{v}$.

Assume now that $w \Vdash \nabla \varphi$. Thus there exists certain $v \in \bigcup \mathcal{N}_{w}$ such that $v \Vdash \varphi$. Suppose that $u \in \bigcap \mathcal{N}_{w}$. Our frame satisfies \textbf{F1}, so we have $z$ defined as earlier. Because $z \in \bigcap \mathcal{N}_{v}$, it satisfies $\varphi$ - and because it belongs to $\bigcup \mathcal{N}_{u}$, we can say that $u \Vdash \nabla \varphi$.

\end{proof}

We can list few axioms which are satisfied in our structures:

\begin{enumerate}

\item $\Delta(\varphi \rightarrow \psi) \rightarrow (\nabla \varphi \rightarrow \nabla \psi)$

\item $\lnot \nabla \bot$

\item $\nabla(\varphi \lor \psi) \rightarrow (\nabla \varphi \lor \nabla \psi)$ (\textit{distributivity over disjunction})

\item $(\nabla \varphi \rightarrow \Delta \psi) \rightarrow \Delta(\varphi \rightarrow \psi)$

\item $(\Delta \varphi \rightarrow \varphi) \land (\varphi \rightarrow \nabla \psi)$ (\textit{bi-modal version of} \textbf{T})

\end{enumerate}

Bozic and Dosen showed in [1] how to establish completeness result for certain extension of \textbf{IML1} (with possibility operator defined as our $\nabla$) - but with respect to the very specific class of bi-relational frames. In fact, they must satisfy the following conditions: \textit{i)} $\leq R = R \leq = R$ and \textit{ii)} $\leq^{-1} R \subseteq R \leq^{-1}$.

Another interesting clause has been proposed by Wijesekera in [11]. In our language it would be (let us use provisional symbol $\Diamond$):

$w \Vdash \Diamond \varphi$ $\Leftrightarrow$ $\bigcap \mathcal{N}_{w} \subseteq \{x \in W;$ there exists $y$ such that $y \in \bigcup \mathcal{N}_{x}$ and $y \Vdash \varphi\}$.

This approach is important at least for two reasons:

\begin{enumerate}

\item We can easily prove that forcing of $\Diamond \varphi$ is monotone with respect to minimal neighborhoods even without \textbf{F1}. Suppose that $w \Vdash \Diamond \varphi$ and take an arbitrary $v \in \bigcap \mathcal{N}_{w}$. As we know, $\bigcap \mathcal{N}_{v} \subseteq \bigcap \mathcal{N}_{w}$ which clearly leads us to expected conclusion.

\item $\Diamond$ (contrary to $\nabla$) is not distributive with respect to disjunction. Consider the following \textbf{nIML1-F1}-counterexample: $W = \{w, u, v\}, \bigcap \mathcal{N}_{w} = \bigcup \mathcal{N}_{w} = W, \bigcap \mathcal{N}_{u} = \bigcup \mathcal{N}_{u} = \{u\}$ and $\bigcap \mathcal{N}_{v} = \bigcup \mathcal{N}_{v} = \{v\}$. Assume that $V(\varphi) = \{u\}$ and $V(\psi) = \{v\}$. Now both $u$ and $v$ satisfy $\varphi \lor \psi$. Thus, $w \Vdash \Diamond(\varphi \lor \psi)$. But $w \nVdash \Diamond \varphi$ (because of $v$) and $w \nVdash \Diamond \psi$ (because of $u$).
\end{enumerate}

Moreover, the original Wijesekera's system is interesting because if we add the Law of Excluded Middle to it's axioms, we will not get classical modal logic with standard duality between necessity and possibility (see Simpson [9]).

There is also third interesting "possibility of possibility". Simpson in [9] assigns it to the unpublished paper of Plotkin and Stirling. At first, this definition looks strange (we present it in our neighborhood language, also we use new provisional symbol $\heartsuit$):

$w \Vdash \heartsuit \varphi$ $\Leftrightarrow$ there exist $u, v$ such that $w \in \bigcap \mathcal{N}_{u}$, $v \in \bigcup \mathcal{N}_{u}$ and $v \Vdash \varphi$

What is surprising, is the fact that we make "step back" from $w$ to our expected $u$. Thus, we say that $w$ is seen (intuitionistically) from $u$. Note, however, that such approach guarantees us monotonicity of forcing (because of $\rightarrow$-condition) - so we do not need \textbf{F1} and we can stay with our basic \textbf{nIML1}-structures.

In fact, the real problem with $\Diamond$ and $\heartsuit$ (defined as above) is rather philosophical than mathematical. They are quite too far from our simple intuition of possibility which says (hopefully, the reader will agree) that satisfaction of a given formula $\varphi$ at least in one point of the range assigned to $w$ (e.g. in $\bigcup \mathcal{N}_{w})$ suffices to say that $\varphi$ is possible in $w$. Moreover, in case of $\heartsuit$ we do not expect modal reachability of $v$ from $w$. Thus, we accept strange situation in which $w$ does not see $v$ (by no means) but "believes" $u$ that $u$ sees acceptance of $\varphi$ somewhere in \textit{its own} maximal neighborhood (i.e. in modal sense).

\subsection{U-condition}

Having $\nabla$ we can investigate an interesting restriction, named $U$-condition: $\bigcup \mathcal{N}_{w} \cap \bigcup \mathcal{N}_{v} \neq \emptyset$ $\Rightarrow$ $\bigcap \mathcal{N}_{w} \cap \bigcap \mathcal{N}_{v} \neq \emptyset$.

\begin{lem}

In \textbf{nIML1-F1}-model $U$-condition is characterized by $\lnot (\varphi \land \nabla \lnot \varphi)$.

\end{lem}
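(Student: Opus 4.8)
The plan is to establish the correspondence in the two usual directions: (i) if an \textbf{nIML1-F1}-frame satisfies the $U$-condition then $\lnot(\varphi \land \nabla \lnot \varphi)$ is valid on it (for every valuation and every world); and (ii) if the $U$-condition fails on the frame, then some valuation — in fact one where $\varphi$ is a single propositional letter — together with some world refutes $\lnot(\varphi \land \nabla \lnot \varphi)$. Direction (i) is the ``soundness'' half and is short; direction (ii) is where the work lies.

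For (i) I would argue by contradiction. Suppose $w \nVdash \lnot(\varphi \land \nabla \lnot \varphi)$ in some model on the frame. Then there is $u \in \bigcap \mathcal{N}_{w}$ with $u \Vdash \varphi$ and $u \Vdash \nabla \lnot \varphi$; the second gives $z \in \bigcup \mathcal{N}_{u}$ with $z \Vdash \lnot \varphi$, i.e.\ $\bigcap \mathcal{N}_{z} \cap V(\varphi) = \emptyset$. Since $z \in \bigcup \mathcal{N}_{u} \cap \bigcup \mathcal{N}_{z}$, that intersection is non-empty, so by the $U$-condition $\bigcap \mathcal{N}_{u} \cap \bigcap \mathcal{N}_{z} \neq \emptyset$; pick $t$ in it. By monotonicity of forcing (Theorem 2.1, which in an \textbf{F1}-model extends to every formula by Lemma 9.1 together with a routine induction), $u \Vdash \varphi$ gives $\bigcap \mathcal{N}_{u} \subseteq V(\varphi)$, hence $t \Vdash \varphi$; but $t \in \bigcap \mathcal{N}_{z}$ and $\bigcap \mathcal{N}_{z} \cap V(\varphi) = \emptyset$, a contradiction. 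Observe that \textbf{F1} is used here only to be allowed to treat an arbitrary $\varphi$ (possibly containing $\nabla$) as monotone.

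For (ii), assume the $U$-condition fails via a pair $(a,b)$: there is $s \in \bigcup \mathcal{N}_{a} \cap \bigcup \mathcal{N}_{b}$ while $\bigcap \mathcal{N}_{a} \cap \bigcap \mathcal{N}_{b} = \emptyset$. It suffices to produce a world $u$ and some $z \in \bigcup \mathcal{N}_{u}$ with $\bigcap \mathcal{N}_{z} \cap \bigcap \mathcal{N}_{u} = \emptyset$: then, putting $V(p) = \bigcap \mathcal{N}_{u}$ for a propositional letter $p$ (a legitimate valuation by the $\rightarrow$-condition), we get $u \Vdash p$, $z \Vdash \lnot p$, hence $u \Vdash p \land \nabla \lnot p$ and so $u \nVdash \lnot(p \land \nabla \lnot p)$. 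If $\bigcap \mathcal{N}_{s}$ is disjoint from $\bigcap \mathcal{N}_{a}$ (symmetrically, from $\bigcap \mathcal{N}_{b}$), take $u := a$, $z := s$ and we are done. The remaining case — $\bigcap \mathcal{N}_{s}$ meets both $\bigcap \mathcal{N}_{a}$ and $\bigcap \mathcal{N}_{b}$ — is the delicate one and is exactly where \textbf{F1} is needed on the construction side: choose $t_a \in \bigcap \mathcal{N}_{s} \cap \bigcap \mathcal{N}_{a}$ and $t_b \in \bigcap \mathcal{N}_{s} \cap \bigcap \mathcal{N}_{b}$; the $\rightarrow$-condition yields $\bigcap \mathcal{N}_{t_a} \subseteq \bigcap \mathcal{N}_{a}$ and $\bigcap \mathcal{N}_{t_b} \subseteq \bigcap \mathcal{N}_{b}$, hence $\bigcap \mathcal{N}_{t_a} \cap \bigcap \mathcal{N}_{t_b} = \emptyset$. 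Since $t_a \in \bigcap \mathcal{N}_{s}$ and $t_b \in \bigcap \mathcal{N}_{s} \subseteq \bigcup \mathcal{N}_{s}$, \textbf{F1} in its neighborhood form (as written out in the proof of Lemma 9.1) supplies $z \in \bigcup \mathcal{N}_{t_a} \cap \bigcap \mathcal{N}_{t_b}$; then $\bigcap \mathcal{N}_{z} \subseteq \bigcap \mathcal{N}_{t_b}$, so $\bigcap \mathcal{N}_{z} \cap \bigcap \mathcal{N}_{t_a} = \emptyset$, and we finish with $u := t_a$ and this $z$.

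I expect the main obstacle to be precisely this last case split in (ii): the bare failure of the $U$-condition only hands us two \emph{maximal} neighborhoods sharing a point $s$, whereas to kill the formula we need a world whose \emph{minimal} neighborhood is disjoint from the minimal neighborhood of one of its modal successors, and manufacturing such a world forces us to route through $s$ and apply \textbf{F1}. The other ingredients — checking the valuation is admissible, the monotonicity step, and the routine $\land,\lor,\rightarrow,\rightsquigarrow,\Delta$ cases of the background inductions — are straightforward.
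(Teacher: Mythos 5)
Your proposal is correct, and its first half coincides with the paper's argument for the substantive direction: assuming the $U$-condition and a failure of $\lnot(\varphi \land \nabla\lnot\varphi)$, you pass to a point of the minimal neighborhood forcing $\varphi \land \nabla\lnot\varphi$, use the fact that the witnessing successor $z$ lies in both maximal neighborhoods to invoke $U$, and derive a contradiction via monotonicity of forcing (which, as you rightly note, needs Lemma 9.1 / \textbf{F1} to cover formulas containing $\nabla$ --- a point the paper leaves implicit). Where you genuinely diverge is the converse. The paper does not argue on an arbitrary frame at all: it simply exhibits one fixed two-world model violating $U$ in which the formula fails, which shows the formula is not valid over the whole class of \textbf{nIML1-F1}-frames but does not establish the frame-correspondence reading of ``characterized by''. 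You instead prove the stronger statement: on \emph{any} \textbf{nIML1-F1}-frame where $U$ fails via $a,b,s$, you manufacture a world $u$ with a modal successor $z$ whose minimal neighborhood is disjoint from $\bigcap\mathcal{N}_u$, splitting on whether $\bigcap\mathcal{N}_s$ misses one of $\bigcap\mathcal{N}_a,\bigcap\mathcal{N}_b$ (take $u:=a$ or $b$, $z:=s$) or meets both (route through $t_a,t_b\in\bigcap\mathcal{N}_s$ and apply \textbf{F1} at $s$ plus the $\rightarrow$-condition), and then refute the formula with the admissible valuation $V(p)=\bigcap\mathcal{N}_u$. I checked the case split and the \textbf{F1} application; they are sound. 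Your version buys the genuine definability result (validity of the scheme on a frame forces $U$), at the cost of a longer construction; the paper's version is shorter but proves only the weaker non-validity claim.
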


\begin{proof}
Suppose that we do not have $U$-condition. Then take $W = \{w, u\}, \bigcap \mathcal{N}_{w} = \{w\}, \bigcup \mathcal{N}_{w} = \{w, u\}, \bigcap \mathcal{N}_{u} = \bigcup \mathcal{N}_{u} = \{u\}, V(\varphi) = \{w\}$. Now $w \nVdash \lnot(\varphi \land \nabla \lnot \varphi)$. It is because we have an element of minimal $w$-neighborhood (in fact, $w$ itself) which forces $\varphi \land \nabla \lnot \varphi$. The right part of this conjunction is forced because there is $u \in \bigcup \mathcal{N}_{w}$ such that $u \Vdash \lnot \varphi$.

Now assume $U$-condition. Suppose that there is $w \in W$ such that $w \nVdash \lnot(\varphi \land \nabla \lnot \varphi)$. Then there is $v \in \bigcap \mathcal{N}_{w}$ for which conjunction in brackets holds. Then there exists $u \in \bigcup \mathcal{N}_{v}$ such that $u \Vdash \lnot \varphi$. Clearly, $\bigcup \mathcal{N}_{v} \cap \bigcup \mathcal{N}_{u} \neq \emptyset$. Thus, $\bigcap \mathcal{N}_{v} \cap \bigcap \mathcal{N}_{u} \neq \emptyset$. But $v \Vdash \varphi$ so $\bigcap \mathcal{N}_{v} \subseteq V(\varphi)$ and then $\bigcap \mathcal{N}_{v} \cap \bigcap \mathcal{N}_{u} \subseteq V(\varphi)$. But if $u \Vdash \lnot \varphi$ then by the same reasoning $\bigcap \mathcal{N}_{v} \cap \bigcap \mathcal{N}_{u} \subseteq -V(\varphi)$. Contradiction.

\end{proof}

\subsection{Question of duality}

A natural question arises: what about duality between $\Delta$ and $\nabla$? We can prove some general results:

\begin{lem}
$\Delta$ is not definable in terms of other connectives.
\end{lem}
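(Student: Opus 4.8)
The plan is the standard expressibility argument. I will produce two \textbf{nIML1}-models $M_1$ and $M_2$ together with a world $w$ lying in both, such that $w$ forces exactly the same $\Delta$-free formulas (those built from propositional variables, $\bot$, $\land$, $\lor$, $\rightarrow$) in $M_1$ and in $M_2$, while $w\Vdash_{M_1}\Delta p$ but $w\nVdash_{M_2}\Delta p$ for a variable $p$. A validity $\Delta p\leftrightarrow\chi(p)$ with $\chi$ built from the remaining connectives would force $w$ to agree on $\chi$ in both models, a contradiction; hence $\Delta$ is not definable. Here ``other connectives'' must of course exclude $\rightsquigarrow$, since $\Delta\varphi$ is literally $(\varphi\rightarrow\varphi)\rightsquigarrow\varphi$ (the left implicand is forced everywhere, so $\bigcup\mathcal{N}_w\subseteq\{v;v\nVdash(\varphi\rightarrow\varphi)\text{ or }v\Vdash\varphi\}$ reduces to $\bigcup\mathcal{N}_w\subseteq V(\varphi)$); the genuine content is that $\Delta$ is not reducible to $\land,\lor,\rightarrow,\bot$, and, with one extra observation, not even to these together with $\nabla$.

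The construction exploits that, by the shape of the forcing clauses, the value of a $\Delta$-free formula at a point depends only on the minimal neighborhoods --- equivalently, by Theorem 2.1, on the $\leq$-generated submodel in the bi-relational reading of Section 5 --- and is insensitive to $\bigcup\mathcal{N}$. So I would let $M_1$ be the one-point model $W=\{w\}$, $\mathcal{N}_w=\{\{w\}\}$, $V(p)=\{w\}$; then $\bigcup\mathcal{N}_w=\{w\}\subseteq V(p)$, so $w\Vdash_{M_1}\Delta p$. For $M_2$ I keep $\bigcap\mathcal{N}_w=\{w\}$ and $w\in V(p)$ but enlarge the maximal neighborhood by a fresh dead-end $z$ (with $\bigcap\mathcal{N}_z=\bigcup\mathcal{N}_z=\{z\}$, $z\notin V(p)$) so that $\bigcup\mathcal{N}_w=\{w,z\}$ and $\mathcal{N}_w=\{\{w\},\{w,z\}\}$. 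Verifying the five frame conditions and the valuation condition for $M_2$ is routine --- the $\Delta$-condition is vacuous since every minimal neighborhood is a singleton --- and now $z\in\bigcup\mathcal{N}_w\setminus V(p)$ gives $w\nVdash_{M_2}\Delta p$, while the $\bigcap$-generated submodel of $w$ in $M_2$ is literally $M_1$, so $w$ satisfies the same $\Delta$-free formulas in both. This already settles the lemma for the basic language.

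If ``other connectives'' is meant to include the possibility operator $\nabla$ of Section 9, the same idea works but needs one refinement, and this is the only delicate point. A bare dead-end $z$ no longer suffices, because $z\Vdash\lnot p$ would give $w\Vdash_{M_2}\nabla\lnot p$ while $w\nVdash_{M_1}\nabla\lnot p$. Instead I would replace $z$ by a point $b$ whose minimal neighborhood contains a $p$-world $c$ (so $b$ forces neither $p$ nor $\lnot p$): by the monotonicity Theorem 2.1 applied at $b$, the whole theory of $b$ is contained in that of $c$, and $c$ agrees with $w$ on all $\nabla$-free formulas. Putting such a $b$ into $\bigcup\mathcal{N}_w$ then changes no value of $\nabla$ at $w$ --- the existential quantifier over $\bigcup\mathcal{N}_w$ gains nothing from a point whose theory it already subsumes --- whereas $\Delta$, being universal, registers the change. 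The main chore is the simultaneous induction establishing ``theory of $b$ $\subseteq$ theory of $w$'' for all formulas at once, together with re-checking the frame conditions for this enlarged $M_2$; everything else is immediate.
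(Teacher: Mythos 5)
Your proposal is correct and follows essentially the paper's own route: the paper likewise exhibits two models differing only by one extra world $u$ added to $\bigcup \mathcal{N}_{w}$ which refutes $\Delta \alpha$, arranges for a world in $u$'s minimal neighborhood (there the old world $v \in \bigcup \mathcal{N}^{1}_{w}$, in your version the fresh twin $c$ of $w$) to absorb $u$'s theory so that the induction over $\Delta$-free formulas, $\nabla$ included, goes through, and it likewise treats $\rightsquigarrow$ as excluded (proving its non-definability separately). The only points to tighten are that the witnessing models should be checked to be \textbf{nIML1-F1}-models (yours are, and this matters both because the lemma lives in the $\nabla$-setting and because monotonicity of forcing for $\nabla$-formulas, which you need for ``theory of $b$ $\subseteq$ theory of $c$'', comes from Lemma 9.1 rather than Theorem 2.1), and that the agreement you need between $c$ and $w$ is on all $\Delta$-free formulas, not merely the $\nabla$-free ones -- which is exactly the simultaneous induction you already flag as the remaining chore.
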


\begin{proof}

At first, consider two \textbf{nIML1-F1}-models which are shown below:

\begin{figure}[ht]
\centering
\includegraphics[height=4cm]{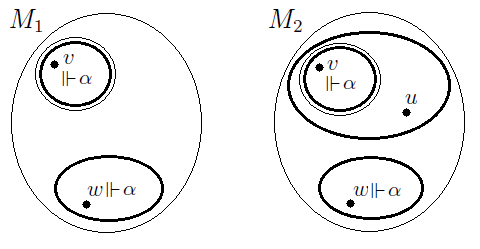}
\caption{$\Delta$ is not definable by $\nabla$ and other operators}
\label{fig:obrazek {deltanabla_1}}
\end{figure}

$M_1 = \langle W, \mathcal{N}^{1}, V^{1} \rangle$, where $W = \{w, v\}, \bigcap \mathcal{N}^{1}_{w} = \{w\}, \bigcup \mathcal{N}^{1}_{w} = \{w, v\}, \bigcap \mathcal{N}^{1}_{v} = \bigcup \mathcal{N}^{1}_{v} = \{v\}$ and $V^{1}(\alpha) = \{w, v\}$

$M_2 = \langle W \cup \{u\}, \mathcal{N}^{2}, V^{2} \rangle$, where $\bigcap \mathcal{N}^{2}_{w} = \{w\}, \bigcup \mathcal{N}^{2}_{w} = \{w, v, u\}, \bigcap \mathcal{N}^{2}_{v} = \bigcup \mathcal{N}^{2}_{v} = \{v\}, \bigcap \mathcal{N}^{2}_{u} = \bigcup \mathcal{N}^{2}_{u} = \{u, v\}$ and $V^{2}_{|W} = V^{1}$.

We can prove that for each $\varphi$ which does not contain $\Delta$ the following equivalence holds: $w \Vdash_{M_1} \varphi$ $\Leftrightarrow$ $w \Vdash_{M_2} \varphi$. The proof goes by induction on the complexity of formula.

For propositional letters we have: $w \Vdash_{M_1} q$ $\Leftrightarrow$ $w \in V^{1}(q)$ $\Leftrightarrow$ $w \in V^{2}(q)$ $\Leftrightarrow$ $w \Vdash_{M_2} q$. Conjunction and disjunction are simple.

For implication we can write: $w \Vdash_{M_1} \gamma \rightarrow \delta$ $\Leftrightarrow$ $\bigcap \mathcal{N}^{1}_{w} \subseteq \{x \in W; x \nVdash_{M_1} \gamma $ or $x \Vdash_{M_1} \delta \}$ $\Leftrightarrow$ $\{w\} \subseteq \{x \in W; x \nVdash_{M_1} \gamma $ or $x \Vdash_{M_1} \delta \}$ $\Leftrightarrow$ $\bigcap \mathcal{N}^{2}_{w} \subseteq \{x \in W \cup \{u\}; x \nVdash_{M_1} \gamma $ or $x \Vdash_{M_1} \delta \}$ $\Leftrightarrow$ $w \Vdash_{M_2} \gamma \rightarrow \delta$.

Now suppose that $\varphi := \nabla \gamma$ and $w \Vdash_{M_1} \nabla \gamma$. This means that there exists $x \in \bigcup \mathcal{N}^{1}_{w}$ such that $x \Vdash_{M_1} \gamma$. But clearly $x \in \bigcup \mathcal{N}^{2}_{w}$ (because $W \subseteq W \cup \{u\})$. By induction hypothesis $x \Vdash_{M_2} \gamma$ and hence $w \Vdash_{M_2} \nabla \gamma$. On the other side, assume that $w \Vdash_{M_2} \nabla \gamma$. Now there is $x \in \bigcup \mathcal{N}^{2}_{w}$ such that $x \Vdash_{M_2} \gamma$. There are three possibilities. If $x = w$ or $x = v$ then $x \in \bigcup \mathcal{N}^{1}_{w}$. By induction hypothesis, $x \Vdash_{M_1} \gamma$ and hence $w \Vdash_{M_2} \nabla \gamma$. If $x = u$, then $v \Vdash_{M_2} \gamma$ (because $v$ belongs to the minimal $u$-neighborhood). At this moment we can repeat earlier reasoning.

The crucial thing is that $w \Vdash_{M_1} \Delta \alpha$ but $w \nVdash_{M_2} \Delta \alpha$ (because $u$ does not accept $\alpha$ and $u \in \bigcup \mathcal{N}^{2}_{w}$).

\end{proof}

Note that the last counter-model shows us also that $\rightsquigarrow$ is not definable by standard connectives: $w \Vdash_{M_1} \top \rightsquigarrow \alpha$ and $w \nVdash_{M_2} \top \rightsquigarrow \alpha$ because of $u$. By $\top$ we understand arbitrary tautology of \textbf{L1-F1}-logic.

\begin{lem}
$\nabla$ is not definable in terms of other connectives.
\end{lem}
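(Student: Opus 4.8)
The plan is to mirror the proof of the preceding lemma: produce two \textbf{nIML1-F1}-models $M_1$, $M_2$ and a world $w$ that forces exactly the same formulas built from propositional variables, $\land$, $\lor$, $\rightarrow$ and $\Delta$ in both models, while $w \nVdash_{M_1} \nabla \alpha$ and $w \Vdash_{M_2} \nabla \alpha$ for some propositional variable $\alpha$. Here ``other connectives'' must be read as the propositional ones together with $\Delta$ (and not $\rightsquigarrow$ or $\sim$): indeed, since $\nabla \varphi$ and $\sim \varphi$ are meta-level negations of one another over $\bigcup \mathcal{N}_w$, in \textbf{nIML1-F1}-models $\nabla \varphi$ turns out to be equivalent to $\lnot \sim \varphi$, using Lemma 9.1 to push forcing of $\nabla \varphi$ down along $\bigcap \mathcal{N}_w$.

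Concretely I would take $M_1$ over $W = \{w, y, a\}$ with $\bigcap \mathcal{N}_w = \{w\}$, $\bigcup \mathcal{N}_w = \{w, y\}$, $\bigcap \mathcal{N}_y = \bigcup \mathcal{N}_y = \{y, a\}$, $\bigcap \mathcal{N}_a = \bigcup \mathcal{N}_a = \{a\}$, and $V^1$ making $a$ the unique point forcing $\alpha$. Then $M_2$ is obtained by adjoining a fresh isolated world $u$ placed \emph{inside the maximal but not the minimal} neighborhood of $w$: $W = \{w, y, a, u\}$, $\bigcup \mathcal{N}_w = \{w, y, u\}$, $\bigcap \mathcal{N}_w = \{w\}$, the neighborhoods of $y$ and $a$ unchanged, $\bigcap \mathcal{N}_u = \bigcup \mathcal{N}_u = \{u\}$, and $V^2$ agreeing with $V^1$ on $\{w, y, a\}$ and forcing $\alpha$ at $u$ (so $u$ and $a$ force exactly the same propositional variables). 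A routine verification shows both are legitimate \textbf{nIML1-F1}-models: conditions (a)--(e) of the frame definition hold because all minimal neighborhoods are downward closed in the obvious sense, and \textbf{F1} at $w$ is automatic since $\bigcap \mathcal{N}_w$ is a singleton, while at $y$ one can always pick $z=a$ or $z=y$. Obviously $w \nVdash_{M_1} \nabla \alpha$ (neither $w$ nor $y$ forces $\alpha$) whereas $w \Vdash_{M_2} \nabla \alpha$ via $u$.

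The heart of the argument is the induction showing that $w$, $y$ and $a$ force the same $\{\land, \lor, \rightarrow, \Delta\}$-formula in $M_1$ as in $M_2$. The propositional, $\land$, $\lor$ and $\rightarrow$ cases are immediate, because $\bigcap \mathcal{N}_w$, $\bigcap \mathcal{N}_y$, $\bigcap \mathcal{N}_a$ and the relevant valuations are literally the same in the two models. The only delicate case is $\Delta \delta$ at $w$, where we must reconcile ``$w$ and $y$ force $\delta$'' (in $M_1$) with ``$w$, $y$ and $u$ force $\delta$'' (in $M_2$); the $w$- and $y$-clauses transfer by the induction hypothesis, and for the $u$-clause I would use two side facts: (i) $a$ and $u$ are isolated points agreeing on all propositional variables, so a trivial sub-induction gives $a \Vdash_{M_2} \varphi \Leftrightarrow u \Vdash_{M_2} \varphi$ for every formula; and (ii) $a \in \bigcap \mathcal{N}_y$, so by Theorem 2.1 (monotonicity of forcing) $y \Vdash_{M_2} \delta \Rightarrow a \Vdash_{M_2} \delta \Rightarrow u \Vdash_{M_2} \delta$. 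Hence the $u$-clause is implied by the $y$-clause and the two readings of $\Delta \delta$ at $w$ coincide. Combining the induction with the disagreement on $\nabla \alpha$ yields the non-definability.

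The main obstacle — and the reason a naive ``add or delete a world'' construction fails — is that enlarging $\bigcup \mathcal{N}_w$ threatens the universal operator: if $u$ were merely a loose leaf forcing $\alpha$, then $\Delta \lnot \alpha$ (and, were it admitted, $\sim \alpha$) would immediately detect it. The remedy is to make $u$ ``shadowed'' from $w$: the auxiliary world $y$ already sits in $\bigcup \mathcal{N}_w$ and carries the $\alpha$-forcing leaf $a$ inside its \emph{minimal} neighborhood, so by monotonicity every formula $y$ forces is already forced by a point — namely $a$, which behaves exactly like $u$ — and this is precisely what neutralizes $\Delta$ while leaving $\nabla$ free to see the new witness $u$.
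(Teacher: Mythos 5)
Your proof is correct and follows essentially the same route as the paper: two pointwise-agreeing \textbf{nIML1-F1}-models whose maximal $w$-neighborhoods differ by a single $\alpha$-world, with monotonicity along a minimal neighborhood rendering that extra world invisible to $\Delta$ -- the paper achieves this by putting the new world $u$ inside $\bigcap \mathcal{N}^{2}_{v}$, whereas you pre-install an isolated duplicate $a$ inside $\bigcap \mathcal{N}_{y}$ of $M_1$, a cosmetic variation of the same mechanism. Your side remark that with $\rightsquigarrow / \sim$ admitted $\nabla \varphi$ would be definable as $\lnot \sim \varphi$ in \textbf{F1}-models, so the lemma concerns only the basic connectives together with $\Delta$, is accurate and consistent with the paper's own proof, which likewise treats only $\land, \lor, \rightarrow, \Delta$.
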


\begin{proof}
Consider two \textbf{nIML1-F1}-models which are shown below:

\begin{figure}[ht]
\centering
\includegraphics[height=4cm]{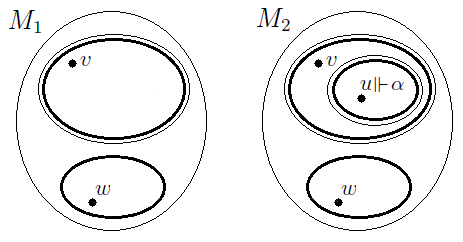}
\caption{$\nabla$ is not definable by $\Delta$ and other operators}
\label{fig:obrazek {deltanabla_1}}
\end{figure}

$M_1 = \langle W, \mathcal{N}^{1}, V^{1} \rangle$, where $W = \{w, v\}, \bigcap \mathcal{N}^{1}_{w} = \{w\}, \bigcup \mathcal{N}^{1}_{w} = \{w, v\}, \bigcap \mathcal{N}^{1}_{v} = \bigcup \mathcal{N}^{1}_{v} = \{v\}$ and $V^{1}(\alpha) = \emptyset$

$M_2 = \langle W \cup \{u\}, \mathcal{N}^{2}, V^{2} \rangle$, where $\bigcap \mathcal{N}^{2}_{w} = \{w\}, \bigcup \mathcal{N}^{2}_{w} = \{w, v, u\}, \bigcap \mathcal{N}^{2}_{v} = \bigcup \mathcal{N}^{2}_{v} = \{v, u\}, \bigcap \mathcal{N}^{2}_{u} = \bigcup \mathcal{N}^{2}_{u} = \{u\}$ and $V^{2} (\alpha) = \{u\}$.

We can prove that for each $\varphi$ which does not contain $\nabla$ the following equivalence holds: $w \Vdash_{M_1} \varphi$ $\Leftrightarrow$ $w \Vdash_{M_2} \varphi$. The proof goes by induction on the complexity of formula. Propositional case is easy, the same can be said about $\land$, $\lor$ and $\rightarrow$. What about $\Delta$? Check the following sequence of meta-equivalences:

$w \Vdash_{M_1} \Delta \varphi$ $\Leftrightarrow$ $\bigcup \mathcal{N}^{1}_{w} \Vdash_{M_1} \varphi$ $\Leftrightarrow$ $\{w, v\} \Vdash_{M_1} \varphi$ $\Leftrightarrow$ $\{w, v\} \Vdash_{M_2} \varphi$ $\Leftrightarrow$ $\{w, v, u\} \Vdash_{M_2} \varphi$ $\Leftrightarrow$ $\bigcup \mathcal{N}^{2}_{w} \Vdash_{M_2} \varphi$ $\Leftrightarrow$ $w \Vdash_{M_2} \Delta \varphi$.

We used induction hypothesis and the fact that $u$ is in minimal $v$-neighborhood - thus acceptation of certain formula in $v$ implies its acceptation in $u$. Now think about $\nabla$-case. Obviously, $w \nVdash_{M_1} \nabla \alpha$ but $w \Vdash_{M_2} \nabla \alpha$.

\end{proof}

One detail should be noted: it turns out that if we accept $T$-condition, then $\lnot \Delta \lnot \varphi \rightarrow \nabla \varphi$ becomes valid. Check it. Suppose that in certain particular \textbf{nIML2-F1}-model we have $w$ such that $w \Vdash \lnot \Delta \lnot \varphi$ and $w \nVdash \nabla \varphi$. Then from the left side: for each $v \in \bigcap \mathcal{N}_{w}, v \nVdash \Delta \lnot \varphi$. This means that there exists $u \in \bigcup \mathcal{N}_{v}$ such that $u \nVdash \lnot \varphi$. So there must be $x \in \bigcap \mathcal{N}_{u}$ such that $x \Vdash \varphi$. Now take a look on the right side for a moment. It says that $\bigcup \mathcal{N}_{w} \nVdash \varphi$. Think about our $u$. We said that $v \in \bigcap \mathcal{N}_{w}$. Thus, $\bigcup \mathcal{N}_{v} \subseteq \bigcup \mathcal{N}_{w}$ (by $\Delta$-condition). Now we use $T$-condition: $u \in \bigcup \mathcal{N}_{v}$ so $\bigcap \mathcal{N}_{u} \subseteq \bigcup \mathcal{N}_{v} \subseteq \bigcup \mathcal{N}_{w}$. Hence, $x \nVdash \varphi$. Contradiction.

Attention: in the previous chapter we proved that in our setting (i.e. in the presence of $\Delta$-condition) $T$-condition and \textbf{F2} are equivalent. Thus, \textbf{nIML2-F1}-frame can be considered as \textbf{nIML1-F1-F2}-frame. Hence, \textbf{nIML2-F1}-frames are just subclass of Fischer-Servi structures - but satisfying $\Delta$-condition (which is quite strong).

Simpson says in [9] that \textbf{F2} condition means exactly that formulas such as $\lnot \Diamond \varphi \rightarrow \Delta \lnot \varphi$ hold. In our language it would be $\lnot \nabla \varphi \rightarrow \Delta \lnot \varphi$. Note that this formula is equivalent to $\sim \varphi \rightarrow \Delta \lnot \varphi$ (which was mentioned in chapter 6).

\section{Some topological issues}

One can easily see that our neighborhood structure does not form a topology, i.e. we cannot treat $w$-neighborhoods as open sets. It is because we do not have superset axiom which is necessary for any proper definition of the family of open sets. However, we have some clues (or maybe intuition) that there can be certain analogy between $\bigcup \mathcal{N}_{w}$ and the idea of topological space. It seems that we can consider our universe as a sum of such maximal neighborhoods. Those 'maximal bubbles' can intersect or form unions.

This leads us to the following definition:

\begin{df}
We say that set $X \subseteq W$ is $w$-open iff $X \subseteq \bigcup \mathcal{N}_{w}$ and for every $v \in X$ we have $\bigcap \mathcal{N}_{v} \subseteq X$. We denote $\mathcal{U}_{w} = \{X \subseteq W; X $ are $w$-open $\}$.
\end{df}

But almost immediately we have first problem. Think about $X = \bigcup \mathcal{N}_{w}$. We want it to be open. It is because we treat this set as our 'relative universe', as the whole space. It is obvious that $X \subseteq \bigcup \mathcal{N}_{w}$ but we do not have any guarantee (in general case) that $\bigcap \mathcal{N}_{v} \subseteq X$ for every $v \in X$. We must use 'brute force' and assume that we consider only \textit{topologically suitable} models. In fact, such models must be \textbf{nIML2}-models, i.e. satisfy $T$-condition. Recall this restriction: if $v \in \bigcup \mathcal{N}_{w}$ then $\bigcap \mathcal{N}_{v} \subseteq \bigcup \mathcal{N}_{w}$. It is obvious that it is useful for our needs. Also, it is possible to formulate it in different but equivalent way - just as it was with $\rightarrow$- and $\Delta$-conditions.

\begin{lem}
This condition: $\bigcup \mathcal{N}_{w} \subseteq X \Rightarrow \bigcup \mathcal{N}_{w} \subseteq \{v \in W; \bigcap \mathcal{N}_{w} \subseteq X\}$ implies \textit{T}-condition.
\end{lem}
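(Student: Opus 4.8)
The plan is to mimic the easy direction of Lemmas 2.1 and 2.2, where a reformulated condition is shown to imply the original one simply by instantiating the ambient set $X$ at a well-chosen value. First I would observe that the displayed hypothesis is meant to be read with $\bigcap \mathcal{N}_{v}$ inside the set-builder, i.e. as
\[
\bigcup \mathcal{N}_{w} \subseteq X \;\Rightarrow\; \bigcup \mathcal{N}_{w} \subseteq \{v \in W; \bigcap \mathcal{N}_{v} \subseteq X\},
\]
in exact parallel with the reformulations of the $\rightarrow$- and $\Delta$-conditions obtained above; the variant with a bare $\bigcap \mathcal{N}_{w}$ would be vacuously true (since $\bigcap \mathcal{N}_{w} \subseteq \bigcup \mathcal{N}_{w} \subseteq X$ always makes that set equal to $W$) and so could not entail anything non-trivial.

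Next, fixing an arbitrary $w \in W$, I would apply this hypothesis with $X := \bigcup \mathcal{N}_{w}$. The antecedent $\bigcup \mathcal{N}_{w} \subseteq \bigcup \mathcal{N}_{w}$ holds trivially, so the consequent yields $\bigcup \mathcal{N}_{w} \subseteq \{v \in W; \bigcap \mathcal{N}_{v} \subseteq \bigcup \mathcal{N}_{w}\}$. Unpacking the set-builder notation, this says exactly that for every $v \in \bigcup \mathcal{N}_{w}$ we have $\bigcap \mathcal{N}_{v} \subseteq \bigcup \mathcal{N}_{w}$, which is precisely the $T$-condition. Since $w$ was arbitrary, the proof is complete.

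I do not expect any real obstacle here: the whole argument is a single substitution, and the only point requiring care is recognising the intended quantification in the displayed formula (and noting, as in the counter-examples of Sections 9--10, that the $T$-condition is a genuine extra restriction, so the implication is not vacuous). One could just as easily run the computations of Lemmas 2.1 and 2.2 in reverse to obtain the converse implication as well, thereby upgrading the statement to an equivalence, but the lemma as phrased asks only for one direction, so that is unnecessary.
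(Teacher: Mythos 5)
Your proof is correct and is essentially the paper's own argument: the paper likewise instantiates the hypothesis at $X = \bigcup \mathcal{N}_{w}$ (noting $\bigcup \mathcal{N}_{w} \subseteq \bigcup \mathcal{N}_{w}$) and reads off $\bigcap \mathcal{N}_{u} \subseteq \bigcup \mathcal{N}_{w}$ for every $u \in \bigcup \mathcal{N}_{w}$. Your repair of the set-builder to $\{v \in W; \bigcap \mathcal{N}_{v} \subseteq X\}$ also matches the paper, whose proof works with $\{z \in W; \bigcap \mathcal{N}_{z} \subseteq \bigcup \mathcal{N}_{w}\}$, confirming the $\bigcap \mathcal{N}_{w}$ in the displayed statement is a typo.
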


\begin{proof}
Suppose that $u \in \bigcup \mathcal{N}_{w}$. Of course $\bigcup \mathcal{N}_{w} \subseteq \bigcup \mathcal{N}_{w}$ and then (by our assumption) $\bigcup \mathcal{N}_{w} \subseteq \{z \in W;$ $\bigcap \mathcal{N}_{z} \subseteq \bigcup \mathcal{N}_{w}\}$. So if $u \in \bigcup \mathcal{N}_{w}$ then $\bigcap \mathcal{N}_{u} \subseteq \bigcup \mathcal{N}_{w}$.
\end{proof}

\begin{lem}
\textit{T}-condition implies the following: $\bigcup \mathcal{N}_{w} \subseteq X \Rightarrow \bigcup \mathcal{N}_{w} \subseteq \{v \in W; \bigcap \mathcal{N}_{w} \subseteq X\}$.
\end{lem}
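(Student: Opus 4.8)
The plan is to prove the implication directly, unpacking the definition of $T$-condition and the proposed set-theoretic statement. Recall that $T$-condition says: if $v \in \bigcup \mathcal{N}_{w}$ then $\bigcap \mathcal{N}_{v} \subseteq \bigcup \mathcal{N}_{w}$. The target property reads: whenever $\bigcup \mathcal{N}_{w} \subseteq X$, then every $v \in \bigcup \mathcal{N}_{w}$ satisfies $\bigcap \mathcal{N}_{v} \subseteq X$ (note that the set-builder $\{v \in W; \bigcap \mathcal{N}_{w} \subseteq X\}$ should, by the pattern of Lemmas 2.1 and 2.2 and the proof of Lemma 10.1, be read with $\bigcap \mathcal{N}_{v}$ in place of $\bigcap \mathcal{N}_{w}$).

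First I would fix an arbitrary $w \in W$ and an arbitrary $X \subseteq W$ with $\bigcup \mathcal{N}_{w} \subseteq X$. Then I would take an arbitrary $v \in \bigcup \mathcal{N}_{w}$ and show $\bigcap \mathcal{N}_{v} \subseteq X$. By $T$-condition applied to $v \in \bigcup \mathcal{N}_{w}$ we immediately get $\bigcap \mathcal{N}_{v} \subseteq \bigcup \mathcal{N}_{w}$. Combining this with the hypothesis $\bigcup \mathcal{N}_{w} \subseteq X$ and transitivity of $\subseteq$, we obtain $\bigcap \mathcal{N}_{v} \subseteq X$, which is exactly what we needed. Hence $\bigcup \mathcal{N}_{w} \subseteq \{v \in W; \bigcap \mathcal{N}_{v} \subseteq X\}$.

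This is essentially the converse of Lemma 10.1, and like that lemma it is a two-line chase: the only ingredient is $T$-condition plus transitivity of inclusion. There is no real obstacle here — the main thing to be careful about is simply matching the quantifier structure of the displayed property to the statement of $T$-condition and not confusing $\bigcap \mathcal{N}_{w}$ with $\bigcap \mathcal{N}_{v}$ inside the set-builder. Together with Lemma 10.1 this gives a full equivalence between $T$-condition and the displayed "relativized" formulation, paralleling the situation for the $\rightarrow$- and $\Delta$-conditions in Section 2.4.
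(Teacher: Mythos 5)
Your proof is correct and is essentially the paper's argument: the paper proves the same statement by assuming $\bigcup \mathcal{N}_{w} \nsubseteq \{z \in W;\ \bigcap \mathcal{N}_{z} \subseteq X\}$ and deriving a contradiction via the chain $\bigcap \mathcal{N}_{s} \subseteq \bigcup \mathcal{N}_{w} \subseteq X$, which is just the contrapositive packaging of your direct inclusion-chase. Your reading of the set-builder with $\bigcap \mathcal{N}_{v}$ in place of $\bigcap \mathcal{N}_{w}$ is also exactly how the paper's own proof treats it, so that typo is handled correctly.
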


\begin{proof}
Take $r \in W$ and assume that $\bigcup \mathcal{N}_{r} \subseteq X \subseteq W$. Now consider $Y = \{z \in W;$ $\bigcap \mathcal{N}_{z} \subseteq X\}$.

Suppose that $\bigcup \mathcal{N}_{r} \nsubseteq Y$, so there exists certain $s \in \bigcup \mathcal{N}_{r}$ that $s \notin Y$. But this gives us that $\bigcap \mathcal{N}_{s} \nsubseteq X$, so there is $p \in \bigcap \mathcal{N}_{s}$ such that $p \notin X$. But if $s \in \bigcup \mathcal{N}_{r}$ then $\bigcap \mathcal{N}_{s} \subseteq \bigcup \mathcal{N}_{r} \subseteq X$. Then we have that $p \in X$ - and this is contradiction.
\end{proof}

Now let us check our definition of topology.

\begin{tw}
Assume that we have suitable neighborhood model $\langle W, \mathcal{N}, V \rangle$. Then $\mathcal{U}_{w}$ is a topological space for every $w \in W$.
\end{tw}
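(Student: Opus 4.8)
The plan is to verify the three defining axioms of a topological space for the family $\mathcal{U}_w$: (i) both $\emptyset$ and the ``whole space'' belong to $\mathcal{U}_w$; (ii) $\mathcal{U}_w$ is closed under finite intersections; (iii) $\mathcal{U}_w$ is closed under arbitrary unions. Here the ambient space is taken to be $\bigcup \mathcal{N}_w$ rather than $W$, since that is the relative universe over which $w$-open sets are considered.

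First I would dispose of the easy membership claims. The empty set is trivially $w$-open: $\emptyset \subseteq \bigcup \mathcal{N}_w$ and the clause ``for every $v \in \emptyset$ we have $\bigcap \mathcal{N}_v \subseteq \emptyset$'' is vacuously true. For the whole space $\bigcup \mathcal{N}_w$ itself we need $\bigcup \mathcal{N}_w \subseteq \bigcup \mathcal{N}_w$ (obvious) together with $\bigcap \mathcal{N}_v \subseteq \bigcup \mathcal{N}_w$ for every $v \in \bigcup \mathcal{N}_w$ --- but this is precisely the $T$-condition, which holds because the model is suitable, i.e.\ an \textbf{nIML2}-model. This is the only place where suitability is used, and it is exactly the point the preceding discussion flagged as the reason one must restrict to topologically suitable models.

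Next, for closure under binary intersection, I take $X, Y \in \mathcal{U}_w$ and examine $X \cap Y$. The inclusion $X \cap Y \subseteq X \subseteq \bigcup \mathcal{N}_w$ is immediate. If $v \in X \cap Y$, then $\bigcap \mathcal{N}_v \subseteq X$ (because $v \in X$ and $X$ is $w$-open) and likewise $\bigcap \mathcal{N}_v \subseteq Y$, whence $\bigcap \mathcal{N}_v \subseteq X \cap Y$; so $X \cap Y \in \mathcal{U}_w$. Finite intersections follow by an obvious induction, the empty intersection being the whole space already handled above. For arbitrary unions, let $\{X_i\}_{i \in I} \subseteq \mathcal{U}_w$; then $\bigcup_{i \in I} X_i \subseteq \bigcup \mathcal{N}_w$ since every $X_i$ is, and if $v \in \bigcup_{i \in I} X_i$ then $v \in X_j$ for some $j$, so $\bigcap \mathcal{N}_v \subseteq X_j \subseteq \bigcup_{i \in I} X_i$. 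Hence $\mathcal{U}_w$ satisfies all three axioms.

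The whole argument is essentially routine set-theoretic bookkeeping; the single substantive ingredient --- the ``hard part'' in name only --- is recognising that the top element $\bigcup \mathcal{N}_w$ of the would-be topology is $w$-open exactly when the $T$-condition is at hand, which is precisely why the class of models has to be narrowed to the suitable (\textbf{nIML2}) ones.
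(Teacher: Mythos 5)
Your proof is correct and follows essentially the same route as the paper: empty set vacuously, the top element $\bigcup \mathcal{N}_{w}$ via the $T$-condition (the one place suitability enters), and routine checks for intersections and unions. The only cosmetic difference is that the paper verifies closure under \emph{arbitrary} intersections (which is what makes $\mathcal{U}_{w}$ Alexandroff, as remarked afterwards), whereas you verify only finite ones, which is all the theorem itself requires.
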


\begin{proof}
Let us check standard properties of well-defined topology.
\begin{enumerate}

\item Take empty set. We can say that $\emptyset \in \mathcal{U}_{w}$ because $\emptyset \subseteq \bigcup \mathcal{N}_{w}$ and the second condition is also true because there are no any $v$ in $\emptyset$.

\item Consider the whole 'relative universe', i.e. $\bigcup \mathcal{N}_{w}$. Clearly this set is contained in itself and from topological suitability we have that for every $v \in \bigcup \mathcal{N}_{w}$ the second condition holds: $\bigcap \mathcal{N}_{v} \subseteq \bigcup \mathcal{N}_{w}$.

\item Consider $\mathscr{X} \subseteq \mathcal{U}_{w}$. We show that $\bigcap \mathscr{X} \in \mathcal{U}_{w}$. The first condition is simple: every element of $\mathscr{X}$ belongs to $\mathcal{U}_{w}$ so it is contained in $\bigcup \mathcal{N}_{w}$. The same holds of course for intersection of all such elements.

    Now let $v \in \bigcap \mathscr{X}$. By the definition we have that $\bigcap \mathcal{N}_{v} \subseteq X$ for every $X \in \mathscr{X}$. Then $\bigcap \mathcal{N}_{v} \subseteq \bigcap \mathscr{X}$.

\item In the last case we deal with arbitrary unions. Suppose that $\mathscr{X} \subseteq \mathcal{U}_{w}$ and consider $\bigcup \mathscr{X}$. Surely this union is contained in $\bigcup \mathcal{N}_{w}$. Now let us take an arbitrary $v \in \bigcup \mathscr{X}$. We know that $\bigcap \mathcal{N}_{v} \subseteq X$ for some $X \in \mathscr{X}$ (in fact, it holds for every $X$ which contains $v$). Then clearly $\bigcap \mathcal{N}_{v} \subseteq \bigcup \mathscr{X}$.

\end{enumerate}

\end{proof}

Note that for every $w \in W$ $\mathcal{U}_{w}$ is an Alexandroff space, i.e. intersection of any family of open sets is open. In general, those topological spaces can be very different - and we cannot expect even $T_0$-property. For example, take $W = \{w, v\}$ where $w$ and $v$ have only one (maximal and thus also minimal) neighborhood, the same for both elements, and it is just $W$. Now $W$ is the only $w$-open set (or $v$-open set).

One could say that it would be also very intuitive to define $w$-open set as such: $X \subseteq W$ is $w$-open $\Leftrightarrow$ $X \subseteq \bigcup \mathcal{N}_{w}$ and ($X = \emptyset$ or $\bigcap \mathcal{N}_{w} \subseteq X$). Such definition does not require our suitability condition - but it states that non-empty $w$-open sets are just $w$-neighborhoods (with their sums and intersections). Such topology would be completely blind for the existence of other neighborhoods.

\section{Certain classical cases}

\subsection{Basic definitions}

In this section our approach to structures introduced earlier can be described as classical. It means that we use axioms of classical propositional calculus (\textbf{CPC}) as our syntactic foundation - and we do not expect monotonicity of forcing in model. On the other hand, we introduce two modal operators: the first is just like $\Delta$ (e.g. it refers to satisfaction of formula in the whole maximal neighborhood), the second one describes forcing in minimal neighborhood. Thus, it is like $\Box$. In fact, we even use this symbol. As we remember, $w \Vdash \Box \varphi$ means - in classical neighborhood semantics - that there exists certain $X \in \mathcal{N}_{w}$ such that $X \Vdash \varphi$. But if $\bigcap \mathcal{N}_{w} \in \mathcal{N}_{w}$, then it is equivalent to say that $\bigcap \mathcal{N}_{w} \Vdash \varphi$.

\begin{df}
Our basic structure is a \textbf{nCL1}-frame for \textbf{CL1} logic. Such frame is defined as an ordered pair $\langle W, \mathcal{N} \rangle$ where:

\begin{enumerate}
\item $W$ is a non-empty set (of worlds, states or points)

\item $\mathcal{N}$ is a function from $W$ into $P(P(W))$ such that:

\begin{enumerate}

\item $w \in \bigcap \mathcal{N}_{w}$

\item $\bigcap \mathcal{N}_{w} \in \mathcal{N}_{w}$

\item $X \subseteq \bigcup \mathcal{N}_{w}$ and $\bigcap \mathcal{N}_{w} \subseteq X$ $\Rightarrow$ $X \in \mathcal{N}_{w}$ (\textit{relativized superset axiom})

\end{enumerate}

\end{enumerate}

\end{df}

Note that we discarded $\rightarrow$-condition, i.e. nesting of minimal neighborhoods. Having frame, we define model:

\begin{df}
Neighborhood \textbf{nCL1}-model is a triple $\langle W, \mathcal{N}, V \rangle$, where $\langle W, \mathcal{N} \rangle$ is an \textbf{nCL1}-frame and $V$ is a function from $PV$ into $P(W)$.
\end{df}

\begin{df}
Forcing of formulas in a world $w \in W$ is defined inductively:

\begin{enumerate}

\item $w \Vdash q$ $\Leftrightarrow$ $w \in V(q)$ for any $q \in PV$

\item $w \Vdash \varphi \rightarrow \psi$ $\Leftrightarrow$ $w \nVdash \varphi$ or $w \Vdash \psi$.

\item $w \Vdash \lnot \varphi$ $\Leftrightarrow$ $w \nVdash \varphi$

\item $w \Vdash \Box \varphi$ $\Leftrightarrow$ $\bigcap \mathcal{N}_{w} \subseteq \{v \in W; v \Vdash \varphi\}$

\item $w \Vdash \Delta \varphi$ $\Leftrightarrow$ $\bigcup \mathcal{N}_{w} \subseteq \{v \in W; v \Vdash \varphi\}$

\item $w \nVdash \bot$

\end{enumerate}
\end{df}

Note that such model has certain non-mathematical interpretation. We can treat each world as a person which spreads certain opinion or information among his neighbors, having minimal and maximal expected target group (audience). As for $\Delta$, we can say that it behaves like relativized universal modality (see Pacuit [5]).

Which (modal) axioms are true in this class? Here we have those which are essential for the proof of completeness theorem:

\begin{enumerate}

\item Axioms involving only $\Box$ as modal operator:

\begin{enumerate}

\item \textbf{K}$\Box$: $\Box(\varphi \rightarrow \psi) \rightarrow (\Box \varphi \rightarrow \psi)$

\item \textbf{T}$\Box$: $\Box \varphi \rightarrow \varphi$ (note that $w \in \bigcap \mathcal{N}_{w}$)

\item \textbf{RN} (\textit{necessity rule}): $\varphi$ $\Rightarrow$ $\Box \varphi$

\end{enumerate}

\item Axioms involving only $\Delta$ as modal operator: $\Delta$-analogues of \textbf{K}, \textbf{T} and \textbf{RN}

\item Axioms involving both $\Box$ and $\Delta$: $\Delta \varphi \rightarrow \Box \varphi$ ($\Delta$\textbf{N})

\end{enumerate}

\subsection{Completeness and canonical model}

It is probably obvious that semantic version of deduction theorem still holds, just like theorem about expansion of each theory to relatively maximal prime-theory. Thus we can define canonical model:

\begin{df}

We define canonical \textbf{nCL1}-model as a triple $\langle W, \mathcal{N}, V \rangle$ where $W$, $\mathcal{N}$ and $V$ are defined as presented below.

\begin{enumerate}

\item $W$ is the set of all \textbf{CL1} prime theories.

\item Neighborhood function for canonical model is a mapping $\mathcal{N}: W \rightarrow P(P(W))$ such that

$X \in \mathcal{N}_{w}$ $\Leftrightarrow$ $\bigcap \mathcal{N}_{w} \subseteq X \subseteq \bigcup \mathcal{N}_{w}$, where:

\begin{enumerate}

\item $\bigcap \mathcal{N}_{w} = \{v \in W: \Box \varphi \in w \Rightarrow \varphi \in v\}$.

\item $\bigcup \mathcal{N}_{w} = \{v \in W: \Delta \varphi \in w \Rightarrow \varphi \in v\}$.
\end{enumerate}

\item $V$ is a valuation defined in standard manner, i.e. as a function $V: PV \rightarrow P(W)$ such that $w \in V(q)$ $\Leftrightarrow$ $q \in w$.

\end{enumerate}

\end{df}

Of course we must prove that such structure actually is \textbf{nCL1}-model.

\begin{lem}
Canonical neighborhood model is defined in a proper way.
\end{lem}

\begin{proof}

We must check conditions from the definition of frame:

\begin{enumerate}

\item $w \in \bigcap \mathcal{N}_{w}$. This restriction holds because $w$ is theory, thus: if $\Box \varphi \in w$ then (by \textbf{T}$\Box$ and \textit{modus ponens}) $\varphi \in w$.

\item $\bigcap \mathcal{N}_{w} \in \mathcal{N}_{w}$. We must show that $\bigcap \mathcal{N}_{w} \subseteq \bigcup \mathcal{N}_{w}$. Suppose that $v \in \bigcap \mathcal{N}_{w}$ and that $\Delta \varphi \in w$. Because of $\Delta$\textbf{N} axiom and \textit{modus ponens} we have that $\Box \varphi \in w$. Thus $\varphi \in v$.

\item $\bigcap \mathcal{N}_{w} \subseteq X \subseteq \bigcup \mathcal{N}_{w}$ $\Rightarrow$ $X \in \mathcal{N}_{w}$. Relative superset axiom holds by the very definition of canonical neighborhood function.
\end{enumerate}
\end{proof}

Now we can prove the following theorem:

\begin{tw}
If $\langle W, \mathcal{N}, V \rangle$ is a canonical model (defined as above) then for each $w \in W$ we have the following equivalence: $w \Vdash \varphi$ $\Leftrightarrow$ $\varphi \in w$.

\end{tw}

\begin{proof} (sketch)
The proof goes by the induction over the complexity of formulas. We have two really important cases: $\Delta$ and $\Box$. But each of them can be provided almost exactly like proof for $\Delta$ in case of \textbf{nIML1} models and \textbf{IML1} logic. In particular, it means that for $\Rightarrow$-cases we must use $\Delta^{-1}w$ (resp. $\Box^{-1}w$) set and extension lemma. The other side is obvious: if $w \nVdash \Box \varphi$ (resp. $w \nVdash \Delta \varphi$) then there is $v \in \bigcap \mathcal{N}_{w}$ (resp. $v \in \bigcup \mathcal{N}_{w}$) such that $v \nVdash \varphi$. By induction, $\varphi \notin v$. But now, if we assume that $\Delta \varphi$ (resp. $\Box \varphi$) belongs to $w$ then we obtain clear contradiction with the very definition of maximal (minimal) neighborhood in canonical model.

\end{proof}

\subsection{Additional conditions}

Now let us assume that we accept certain restriction on the frame - named earlier $\Delta$-condition: $v \in \bigcap \mathcal{N}_{w} \Rightarrow \bigcup \mathcal{N}_{v} \subseteq \bigcup \mathcal{N}_{w}$. We say that \textbf{nCL1}-models with this property are \textbf{nCL2}-models. Corresponding logic is of course \textbf{CL2}.

In intuitionistic case, $\Delta$-condition was immediately satisfied in canonical model because minimal neighborhood of theory was just collection of its super-theories. In classical case with $\Box$ modality (but without preservation of forcing) we must add new axiom: $\Delta \varphi \rightarrow \Box \Delta \varphi$. It is not satisfied in standard \textbf{nCL1}-model (just recall counter-model from the section about importance of $\Delta$-condition; here reasoning can be similar). Let us check it in case of canonical \textbf{nCL2}-model:

\begin{lem}
Canonical \textbf{nCL2} model satisfies $\Delta$-condition.
\end{lem}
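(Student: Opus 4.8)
The plan is to unfold the canonical definitions and push a formula through the chain $w \rightsquigarrow v \rightsquigarrow u$ using the new axiom as the bridge. Recall that in the canonical \textbf{nCL2}-model (which uses the same definitions as the canonical \textbf{nCL1}-model, since \textbf{CL2} is \textbf{CL1} plus $\Delta \varphi \rightarrow \Box \Delta \varphi$) we have $\bigcap \mathcal{N}_{w} = \{v \in W : \Box \varphi \in w \Rightarrow \varphi \in v\}$ and $\bigcup \mathcal{N}_{x} = \{v \in W : \Delta \varphi \in x \Rightarrow \varphi \in v\}$ for every $x$. So to prove $\Delta$-condition we fix prime theories $w, v, u$ with $v \in \bigcap \mathcal{N}_{w}$ and $u \in \bigcup \mathcal{N}_{v}$, and we must show $u \in \bigcup \mathcal{N}_{w}$, i.e.\ that $\Delta \psi \in w$ implies $\psi \in u$ for an arbitrary formula $\psi$.

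So first I would assume $\Delta \psi \in w$. Since $w$ is a theory it is deductively closed and contains all axioms; in particular it contains the instance $\Delta \psi \rightarrow \Box \Delta \psi$ of the new axiom, and by \textit{modus ponens} we get $\Box \Delta \psi \in w$. Next I would invoke $v \in \bigcap \mathcal{N}_{w}$: by the very definition of the canonical minimal neighborhood, $\Box \Delta \psi \in w$ forces $\Delta \psi \in v$. Finally I would invoke $u \in \bigcup \mathcal{N}_{v}$: by the definition of the canonical maximal neighborhood, $\Delta \psi \in v$ forces $\psi \in u$. Since $\psi$ was arbitrary, this is exactly $u \in \bigcup \mathcal{N}_{w}$, which completes the argument.

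There is no real obstacle here — the proof is a three-link implication chain, and the only thing one has to be a little careful about is that $w$ is genuinely a \textbf{CL2}-theory (hence closed under \textit{modus ponens} and containing the extra axiom), which is guaranteed by the construction of $W$ as the set of all \textbf{CL2} prime theories. If I wanted to be thorough I would also remark that this mirrors exactly the corresponding step in the proof that the canonical \textbf{nIML1}-model is well-defined (condition 5 there), the difference being that in the intuitionistic case $\bigcap \mathcal{N}_{w}$ consists of supertheories so $\Delta \psi \in w$ already gives $\Delta \psi \in v$ for free, whereas here the axiom $\Delta \varphi \rightarrow \Box \Delta \varphi$ is precisely what is needed to recover that passage.
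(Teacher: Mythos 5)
Your proof is correct and is essentially the paper's own argument: the paper merely phrases it contrapositively (assuming $\bigcup \mathcal{N}_{v} \nsubseteq \bigcup \mathcal{N}_{w}$ and deriving a contradiction), while you run the same chain $\Delta \psi \in w \Rightarrow \Box \Delta \psi \in w \Rightarrow \Delta \psi \in v \Rightarrow \psi \in u$ directly, using the axiom $\Delta \varphi \rightarrow \Box \Delta \varphi$ and the canonical definitions of $\bigcap \mathcal{N}_{w}$ and $\bigcup \mathcal{N}_{v}$ exactly as the paper does.
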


\begin{proof}
Suppose that $v \in \bigcap \mathcal{N}_{w}$ and that $\bigcup \mathcal{N}_{v} \nsubseteq \bigcup \mathcal{N}_{w}$. Hence, we have $z \in \bigcup \mathcal{N}_{v}$ such that for certain $\varphi$, $\Delta \varphi \in w$ but $\varphi \notin v$. Note, however, that if $\Delta \varphi \in w$ then $\Box \Delta \varphi \in w$. Thus $\Delta \varphi \in v$ (because $v$ is in minimal $w$-neighborhood) and from that $\varphi \in z$ (because $z$ is in maximal $v$-neighborhood). Contradiction.

\end{proof}

Now we can say that \textbf{CL2} is complete with respect to the class of all \textbf{nCL2}-frames. What about $T$-condition, introduced in our intuitionistic investigations? Consider the following axiom: $\Delta \varphi \rightarrow \Delta \Box \varphi$. It is not satisfied in \textbf{nCL1} or \textbf{nCL2} model. It becomes true when we limit ourselves to the class of \textbf{nCL3}-frames (models), i.e. those which validate $T$-condition (but not necessarily $\Delta$-condition). Let us check that the canonical model for corresponding \textbf{CL3}-logic satisfies this restriction.

\begin{lem}
Canonical \textbf{nCL3} satisfies $T$-condition, i.e. $v \in \bigcup \mathcal{N}_{w} \Rightarrow \bigcap \mathcal{N}_{v} \subseteq \bigcup \mathcal{N}_{w}$.
\end{lem}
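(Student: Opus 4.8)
The plan is to mimic exactly the structure of the proof for the canonical \textbf{nCL2}-model (the $\Delta$-condition lemma) and argue by contradiction, pushing a witness formula along the two ``layers'' of neighborhoods. So I would start by unfolding the definition of $T$-condition in the canonical setting: assuming $v \in \bigcup \mathcal{N}_{w}$ and $u \in \bigcap \mathcal{N}_{v}$, I must show $u \in \bigcup \mathcal{N}_{w}$, i.e. that $\Delta \varphi \in w$ implies $\varphi \in u$ for every formula $\varphi$.

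First I would fix an arbitrary $\varphi$ with $\Delta \varphi \in w$. The key step is to invoke the defining axiom of \textbf{CL3}, namely $\Delta \varphi \rightarrow \Delta \Box \varphi$: since $w$ is a theory it contains this axiom, so by \textit{modus ponens} $\Delta \Box \varphi \in w$. Now I use the hypothesis $v \in \bigcup \mathcal{N}_{w}$, which by the definition of the canonical maximal neighborhood says precisely that $\Delta \psi \in w \Rightarrow \psi \in v$; applying this with $\psi := \Box \varphi$ yields $\Box \varphi \in v$. Finally I use $u \in \bigcap \mathcal{N}_{v}$, which by the definition of the canonical minimal neighborhood means $\Box \psi \in v \Rightarrow \psi \in u$; applying this with $\psi := \varphi$ gives $\varphi \in u$, as required. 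Phrasing it contrapositively (assume $\bigcap \mathcal{N}_{v} \nsubseteq \bigcup \mathcal{N}_{w}$, extract a bad $u$ and a witnessing $\varphi$, then derive a contradiction) would read more in the style of the surrounding lemmas, but the content is the same chain of three implications.

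I do not expect any real obstacle here: the proof is a routine formula-chase once the right axiom is identified, and it parallels the earlier \textbf{nCL2}-lemma almost verbatim (with $\Box \Delta \varphi$ replaced by $\Delta \Box \varphi$ and the roles of the two neighborhood definitions swapped). The only point deserving a moment's care is making sure that $\Delta \Box \varphi$ — not $\Box \Delta \varphi$ — is the formula one needs, and that the canonical definitions of $\bigcap \mathcal{N}_{v}$ (via $\Box$) and $\bigcup \mathcal{N}_{w}$ (via $\Delta$) are applied to the correct worlds; swapping them would break the argument. Having established this, one concludes as before that \textbf{CL3} is complete with respect to the class of all \textbf{nCL3}-frames.
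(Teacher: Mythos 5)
Your argument is exactly the paper's: take $\Delta\varphi \in w$, use the \textbf{CL3} axiom $\Delta\varphi \rightarrow \Delta\Box\varphi$ with \textit{modus ponens} to get $\Delta\Box\varphi \in w$, then push through the canonical definitions of $\bigcup \mathcal{N}_{w}$ and $\bigcap \mathcal{N}_{v}$ to obtain $\Box\varphi \in v$ and finally $\varphi \in u$. The proposal is correct and matches the paper's proof essentially verbatim.
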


\begin{proof}
Suppose that $v \in \bigcup \mathcal{N}_{w}$ and $z \in \bigcap \mathcal{N}_{v}$. If $\Delta \varphi \in w$, then (by the newly introduced axiom and \textit{modus ponens}) $\Delta \Box \varphi \in w$. This gives us that $\Box \varphi \in v$ and then $\varphi \in z$ (because $z$ is in minimal $v$-neighborhood).
\end{proof}

Thus, \textbf{CL3} logic is complete with respect to the class of all \textbf{nCL3}-models.

Another useful axiom is taken from well-known \textbf{S4}-logic: $\Box \varphi \rightarrow \Box \Box \varphi$. In our setting it is equivalent with $\rightarrow$-condition. It is easy to prove completeness in this case (we name such system \textbf{CL4} and corresponding model is \textbf{nCL4}).

Let us take those characterizations together:

\begin{enumerate}

\item \textbf{CL1} = \textbf{CPC}, \textbf{K}$\Box$, \textbf{T}$\Box$, \textbf{RN}$\Box$, \textbf{K}$\Delta$, \textbf{T}$\Delta$, \textbf{RN}$\Delta$, $\Delta$\textbf{N}

\item \textbf{CL2} = \textbf{CL1} $\cup$ $\{\Delta \varphi \rightarrow \Box \Delta \varphi\}$; specific frame condition (\textit{sfc}): $v \in \bigcap \mathcal{N}_{w} \Rightarrow \bigcup \mathcal{N}_{v} \subseteq \bigcup \mathcal{N}_{w}$

\item \textbf{CL3} = \textbf{CL1} $\cup$ $\{\Delta \varphi \rightarrow \Delta \Box \varphi\}$; sfc: $v \in \bigcup \mathcal{N}_{w} \Rightarrow \bigcap \mathcal{N}_{v} \subseteq \bigcup \mathcal{N}_{w}$

\item \textbf{CL4} = \textbf{CL1} $\cup$ $\{\Box \varphi \rightarrow \Box \Box \varphi\}$; sfc: $v \in \bigcap \mathcal{N}_{w}$ $\Rightarrow$ $\bigcap \mathcal{N}_{v} \subseteq \bigcap \mathcal{N}_{w}$

\item \textbf{CL5} = \textbf{CL1} $\cup$ $\{\Delta \varphi \rightarrow \Delta \Delta \varphi\}$; sfc: $v \in \bigcup \mathcal{N}_{w}$ $\Rightarrow$ $\bigcup \mathcal{N}_{v} \subseteq \bigcup \mathcal{N}_{w}$

\end{enumerate}

Those systems are complete with respect to their corresponding classes of frames. Of course we can also mix conditions and consider, for example, logic \textbf{CL2.4} with frame conditions which are exactly like in \textbf{nIML1}-model. It is important because we can establish translation between \textbf{IML1} and \textbf{CL2.4}.

\subsection{Translation between intuitionistic and classical settings}

\begin{df}
Suppose that $\varphi \in$ \textbf{IML1}. Let us define translation $*$ from \textbf{IML1} into \textbf{CL2.4} in the following way:

$\varphi^{*} :=$
\begin{enumerate}

\item $\Box q$, $q \in PV$

\item $\gamma^{*} \land \delta^{*}$, $\varphi: = \gamma \land \delta$

\item $\gamma^{*} \lor \delta^{*}$, $\varphi: = \gamma \lor \delta$

\item $\Box (\gamma^{*} \rightarrow \delta^{*})$, $\varphi: = \gamma \rightarrow \delta$

\item $\varphi$, $\varphi := \Delta \gamma$
\end{enumerate}
\end{df}

Having this function, we can prove the theorem below:

\begin{tw}

Formula $\varphi$ is a tautology of \textbf{IML1} $\Leftrightarrow$ $\varphi^{*}$ is a tautology of \textbf{CL2.4}.
\end{tw}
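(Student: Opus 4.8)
The plan is to establish both implications at the level of neighborhood semantics, using the soundness and completeness results already obtained: \textbf{IML1} is exactly the set of formulas true in all \textbf{nIML1}-models, and \textbf{CL2.4} is complete with respect to \textbf{nCL2.4}-models. The linchpin is the observation that an \textbf{nCL2.4}-frame is \emph{precisely} an \textbf{nIML1}-frame: the conditions $w\in\bigcap\mathcal{N}_{w}$, $\bigcap\mathcal{N}_{w}\in\mathcal{N}_{w}$ and relativized superset are shared; the $\Box\varphi\rightarrow\Box\Box\varphi$ axiom of \textbf{CL2.4} yields the $\rightarrow$-condition; and $\Delta\varphi\rightarrow\Box\Delta\varphi$ yields the $\Delta$-condition. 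Only the admissible valuations differ: \textbf{nIML1}-models require $V$ to be monotone along minimal neighborhoods, whereas \textbf{nCL1}-models impose no such restriction. I read clause 5 of the translation as $(\Delta\gamma)^{*}=\Delta\gamma^{*}$, which is what the recursion forces.

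For the direction $\Leftarrow$, I would take an arbitrary \textbf{nIML1}-model $M=\langle W,\mathcal{N},V\rangle$; by the frame identification it is also an \textbf{nCL2.4}-model. Then, by induction on the \textbf{IML1}-formula $\psi$, I would prove that $w\Vdash^{\mathrm{IML1}}_{M}\psi$ iff $w\Vdash^{\mathrm{CL2.4}}_{M}\psi^{*}$ for every $w\in W$. The atomic case is the only one that uses anything about $V$: since $V$ is monotone and $w\in\bigcap\mathcal{N}_{w}$, we have $w\in V(q)$ iff $\bigcap\mathcal{N}_{w}\subseteq V(q)$, i.e. iff $w\Vdash^{\mathrm{CL2.4}}_{M}\Box q$. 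The cases $\land,\lor$ are immediate. For $\gamma\rightarrow\delta$ the intuitionistic clause $\bigcap\mathcal{N}_{w}\subseteq\{v:v\nVdash\gamma\text{ or }v\Vdash\delta\}$ becomes, via the induction hypothesis and the classical reading of $\rightarrow$ at each $v$, exactly $\bigcap\mathcal{N}_{w}\subseteq\{v:v\Vdash^{\mathrm{CL2.4}}\gamma^{*}\rightarrow\delta^{*}\}$, which is $w\Vdash^{\mathrm{CL2.4}}\Box(\gamma^{*}\rightarrow\delta^{*})$. For $\Delta\gamma$ the clause $\bigcup\mathcal{N}_{w}\subseteq\{v:v\Vdash\gamma\}$ transfers verbatim through the induction hypothesis to $w\Vdash^{\mathrm{CL2.4}}\Delta\gamma^{*}$. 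Hence if $\varphi^{*}$ is a \textbf{CL2.4}-tautology it is forced classically everywhere in $M$, so $\varphi$ is forced intuitionistically everywhere in $M$; as $M$ was arbitrary, $\varphi$ is an \textbf{IML1}-tautology.

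For the direction $\Rightarrow$, the subtlety is that a general \textbf{nCL2.4}-model need not have a monotone valuation, so I would manufacture one. Given $M=\langle W,\mathcal{N},V\rangle$ an \textbf{nCL2.4}-model, put $V'(q)=\{w:\bigcap\mathcal{N}_{w}\subseteq V(q)\}$, the classical truth-set of $\Box q$, and consider $M'=\langle W,\mathcal{N},V'\rangle$. This $V'$ is a legitimate intuitionistic valuation: if $w\in V'(q)$ and $v\in\bigcap\mathcal{N}_{w}$, then by the $\rightarrow$-condition $\bigcap\mathcal{N}_{v}\subseteq\bigcap\mathcal{N}_{w}\subseteq V(q)$, so $v\in V'(q)$ and a fortiori $\bigcap\mathcal{N}_{v}\subseteq V'(q)$, whence $M'$ is an \textbf{nIML1}-model. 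An induction on $\psi$ exactly parallel to the one above --- now with the atomic case holding by the very definition of $V'$ --- gives $w\Vdash^{\mathrm{IML1}}_{M'}\psi$ iff $w\Vdash^{\mathrm{CL2.4}}_{M}\psi^{*}$. Therefore, if $\varphi$ is an \textbf{IML1}-tautology it is forced everywhere in $M'$, so $\varphi^{*}$ is forced everywhere in $M$, and since $M$ was arbitrary, $\varphi^{*}$ is a \textbf{CL2.4}-tautology.

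The part I expect to carry the weight is not any single calculation but the structural bookkeeping: verifying that \textbf{CL2.4}'s frame conditions coincide exactly with \textbf{nIML1}'s --- this is what pins the target logic down, since the $\rightarrow$-condition is needed both to make $V'$ monotone in the $\Rightarrow$-direction and to view an \textbf{nIML1}-model as \textbf{nCL2.4} in the $\Leftarrow$-direction, while the $\Delta$-condition is needed for $M'$ to be a genuine \textbf{nIML1}-frame --- and being careful in the atomic case to keep $w\Vdash q$ distinct from $w\Vdash\Box q$ and to see precisely which hypothesis on the valuation reconciles them. Everything else is the routine propagation of the induction hypothesis through the connectives.
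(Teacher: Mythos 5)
Your proof is correct and takes essentially the same route as the paper: the same two model transformations (keeping the valuation when an \textbf{nIML1}-model is read as an \textbf{nCL2.4}-model, and taking the truth-set of $\Box q$ as the new valuation in the converse direction), the same induction establishing $w \Vdash \psi$ iff $w \Vdash \psi^{*}$, and the same use of the $\rightarrow$- and $\Delta$-conditions; the paper merely phrases both directions contrapositively, via countermodels and ``counterpart'' worlds $\overline{v}$. Your homomorphic reading $(\Delta\gamma)^{*} = \Delta\gamma^{*}$ is indeed what the induction (and the theorem itself) requires, even though clause 5 of the paper's definition literally leaves $\Delta\gamma$ untranslated.
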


\begin{proof}
$\Rightarrow$

Suppose that $\varphi \in$ \textbf{IML1} but $\varphi^{*} \notin$ \textbf{CL2.4}. Thus there is particular \textbf{nCL2.4}-model $M = \{W, \mathcal{N}, V\}$ such that $w \nVdash \varphi^{*}$ for certain $w \in W$.

Now we build \textbf{nIML1}-model $\overline{M} = \{\overline{W}, \overline {\mathcal{N}}, \overline{V}\}$. We assume that each $v \in W$ has its counterpart $\overline{v} \in \overline{W}$ such that:

\begin{enumerate}

\item $\overline{v} \Vdash q$ $\Leftrightarrow$ $v \Vdash \Box q$, for each $q \in PV$

\item $\bigcap \mathcal{N}_{\overline{v}} = \{\overline{u} \in \overline{W}; u \in \bigcap \mathcal{N}_{u}\}$

\item $\bigcup \mathcal{N}_{\overline{v}} = \{\overline{u} \in \overline{W}; u \in \bigcup \mathcal{N}_{u}\}$

\end{enumerate}

We can easily check that $\overline{M}$ is a proper \textbf{nIML1}-model. Just recall that \textbf{nCL2.4}-models satisfy $\rightarrow$-condition, $\Delta$-condition and relative superset axiom. What about monotonicity of valuation in $\overline{M}$? Having necessary frame conditions, we show it only for propositional variables. Suppose that $\overline{v} \Vdash q$. Thus $v \Vdash \Box q$, hence $\bigcap \mathcal{N}_{v} \subseteq V(q)$. Assume that there is certain $\overline{r} \in \bigcap \mathcal{N}_{\overline{v}}$ such that $\overline{v} \nVdash q$. Then we have $r \in \bigcap \mathcal{N}_{v}$ for which $r \nVdash q$. Contradiction.

Now we show that for each $\gamma \in$ \textbf{IML1} and each $v \in W$ we have:

$\overline{v} \Vdash \gamma$ $\Leftrightarrow$ $v \Vdash \gamma^{*}$

This is obvious for propositional variables, by the very definition. By induction, one can easily prove this equivalence for conjunctions and disjunctions. Suppose that $\gamma := \xi \rightarrow \delta$. We can write the following series of equivalences:

$\overline{v} \Vdash \xi \rightarrow \delta$ $\Leftrightarrow$ $\bigcap \mathcal{N}_{\overline{v}} \subseteq \{\overline{u} \in \overline{W}; \overline{u} \nVdash \xi$ or $\overline{u} \Vdash \delta\}$ $\Leftrightarrow$ $\bigcap \mathcal{N}_{v} \subseteq \{u \in W; u \nVdash \xi^{*}$ or $u \Vdash \delta^{*}\}$ $\Leftrightarrow$ $v \Vdash \Box (\xi^{*} \rightarrow \delta^{*})$ $\Leftrightarrow$ $v \Vdash \gamma^{*}$.

Something similar can be written for $\gamma := \Delta \xi$: $\overline{v} \Vdash \Delta \xi$ $\Leftrightarrow$ $\bigcup \mathcal{N}_{\overline{v}} \subseteq \{\overline{u} \in \overline{W}; u \Vdash \xi\}$ $\Leftrightarrow$ $\bigcup \mathcal{N}_{v} \subseteq \{u \in W; u \Vdash \xi\}$ $\Leftrightarrow$ $v \Vdash \Delta \xi$ $\Leftrightarrow$ $v \Vdash \gamma^{*}$.

Finally, we can say that from our assumption that there is certain \textbf{nCL2.4}-model $M$ which falsifies $\varphi$ we obtained intuitionistic model which also falsifies $\varphi$ (because $\overline{w} \nVdash \varphi$). This is contradiction because we assumed also that $\varphi$ is \textbf{IML1}-tautology.

$\Leftarrow$

Now suppose that there is $\varphi$ such that $\varphi \notin$ \textbf{IML1} but $\varphi^{*} \in$ \textbf{CL2.4}. So we have certain \textbf{nIML1}-model $\overline{M} = \{\overline{W}, \overline {\mathcal{N}}, \overline{V}\}$ with $\overline{w} \in \overline{W}$ such that $\overline{w} \nVdash \varphi$. Now let us build another model, \textbf{nCML2.4} one, named $M = \{W, \mathcal{N}, V\}$ in the following way:

\begin{enumerate}

\item $v \Vdash q$ $\Leftrightarrow$ $\overline{v} \Vdash q$, for each $q \in PV$

\item $\bigcap \mathcal{N}_{v} = \{u \in W; \overline{u} \in \bigcap \mathcal{N}_{\overline{v}}\}$

\item $\bigcup \mathcal{N}_{v} = \{u \in W; \overline{u} \in \bigcup \mathcal{N}_{\overline{v}}\}$

\end{enumerate}

It is easy to show that this model is actually \textbf{nCML2.4}. Thus we can show that for each $\gamma \in$ \textbf{IML1} and each $v \in W$ we have:

$v \Vdash \gamma^{*}$ $\Leftrightarrow$ $\overline{v} \Vdash \gamma$

For propositional variables:

$v \Vdash q^{*}$ $\Leftrightarrow$ $v \Vdash \Box q$ $\Leftrightarrow$ $\bigcap \mathcal{N}_{v} \subseteq \{u \in W; u \Vdash q\}$ $\Leftrightarrow$ $\bigcap \mathcal{N}_{\overline{v}} \subseteq \{\overline{u} \in \overline{W}; \overline{u} \Vdash q\}$ $\Leftrightarrow$ $\overline{v} \Vdash q$ (recal that intuitionism preserves monotonicity of forcing w.r.t. minimal neighborhoods).

Cases of conjunction and disjunction are easy (by induction). Now suppose that $\gamma := \xi \rightarrow \delta$. Then:

$v \Vdash \gamma^{*}$ $\Leftrightarrow$ $v \Vdash \Box (\xi^{*} \rightarrow \delta^{*})$ $\Leftrightarrow$ $\bigcap \mathcal{N}_{v} \subseteq \{u \in W; u \nVdash \xi^{*}$ or $u \Vdash \delta^{*}\}$ $\Leftrightarrow$ $\bigcap \mathcal{N}_{\overline{v}} \subseteq \{\overline{u} \in \overline{W}; \overline{u} \nVdash \xi$ or $\overline{u} \Vdash \delta\}$ $\Leftrightarrow$ $\overline{v} \Vdash \xi \rightarrow \delta$ $\Leftrightarrow$ $\overline{v} \Vdash \gamma$.

For $\gamma := \Delta \xi$ the reasoning is similar, even shorter (recall that $(\Delta \xi)^{*} = \Delta \xi$).

Finally, we have classical model validating the same formulas as $\overline{M}$. Thus, if $\overline{w} \nVdash \varphi$, then $w \nVdash \varphi^{*}$, where $w \in W$. Contradiction, because we assumed that $\varphi^{*}$ is \textbf{nCL2.4}-tautology.

\end{proof}

It is obvious that if we add $\Delta \varphi \rightarrow \Delta \Delta \varphi$ to \textbf{IML1} then our function $*$ will become translation between this extended version of \textbf{IML1} and the system \textbf{CL2.4.5}.

\end{document}